\documentclass[12pt, reqno]{amsart}
\usepackage{amsfonts,amsmath,amssymb,amsthm,epsfig,cite,graphicx,hyperref,
	color, esint,fancyhdr, enumerate, latexsym,amsrefs, makecell}
\usepackage{color}
\usepackage[mathscr]{eucal}
\usepackage{comment}
\usepackage{xcolor}
\usepackage{tikz}
\usetikzlibrary{arrows.meta,calc}
\usepackage{float}

\usepackage[nameinlink]{cleveref}

\numberwithin{equation}{section}

\theoremstyle{definition}
\newtheorem{definition}{Definition}[section]
\newtheorem{example}[definition]{Example}

\theoremstyle{remark}
\newtheorem{remark}[definition]{Remark}
\theoremstyle{plain}
\newtheorem{theorem}[definition]{Theorem}
\newtheorem{lemma}[definition]{Lemma}
\newtheorem{proposition}[definition]{Proposition}

\newtheorem{result}[definition]{Result}

\newtheorem{corollary}[definition]{Corollary}

\newtheorem*{claim*}{Claim}

\newcommand{\eps}{\varepsilon}

\newcommand{\Om}{\Omega}
\newcommand{\mcs}{\mathcal{S}}

\newcommand{\st}{\subset}
\newcommand{\St}{\Subset}

\newcommand{\mco}{\mathcal{O}}
\newcommand{\D}{\mathbb{D}}
\newcommand{\bn}{\mathbb{B}^{d}}

\newcommand{\kb}{K_{\mathbb{B}^{d}}}

\newcommand{\mcd}{\mathcal{D}}

\newcommand{\smoo}{\mathcal{C}}

\newcommand{\rl}{{\sf Re}}

\newcommand{\impl}{\Longrightarrow}

\newcommand{\cplx}{\mathbb{C}}

\newcommand{\re}{\mathbb{R}}
\newcommand{\cn}{\mathbb{C}^d}

\begin{document}
	\title[On exponential convergence of Kobayashi geodesics]{On the exponential convergence of Kobayashi geodesics in  strongly convex domains}
	\author{Kingshook Biswas,  Sanjoy Chatterjee and Amar Deep Sarkar}
   \address{Stat-Math Unit, Indian Statistical Institute, 203 B. T. Rd., Kolkata 700108, India}
   \email{kingshook@isical.ac.in}
	\address{Department of Mathematics and Statistics, Indian Institute of Technology,
				KANPUR}
		\email{ramvivsar@gmail.com }
       \address{Department of Mathematics
IIT Bhubaneswar}
		\email{amar@iitbbs.ac.in }
        
	\keywords{Kobayashi distance, Strongly convex domain, Approaching geodesic property, {\rm CAT}(-1) space, Squeezing function,  Geodesic flow }
	\subjclass[2020]{Primary: 32Q45, 32F45, Secondary: 51F30}

	\date{\today}
\begin{abstract}
In this paper, we have proved a quantitative version of the approaching   geodesic property for certain convex domains.  We have proved that 
that if $\Om \subset \cn$ is a bounded strongly convex domain with $\smoo^3$ boundary and $\gamma_{1}, \gamma_{2}:[0, \infty) \to \Om$ are two geodesic rays such that $\gamma_{1}(\infty)=\gamma_{2}(\infty)=\xi \in \partial \Om$.  Then  if the images of $\gamma_{1}$ and $\gamma_{2}$ are contained in the same complex geodesic, then there exists $T\in \mathbb{R}$
\[
\lim_{t \to \infty} \frac{1}{t}
\log K_{\Om}\big(\gamma_{1}(t), \gamma_{2}(t+T)\big)
= -2,
\]
otherwise
\[
\lim_{t \to \infty} \frac{1}{t}
\log K_{\Om}\big(\gamma_{1}(t), \gamma_{2}(t+T)\big)
= -1.
\]    
We prove that if a bounded strongly convex domain with smooth boundary, endowed with its Kobayashi distance, is a ${\rm CAT}(-1)$ space, then it is biholomorphic to the unit ball. Consequently, although a general bounded strongly convex domain need not be ${\rm CAT}(-1)$, the preceding theorem shows that such domains enjoy a key  property of  of ${\rm CAT}(-1)$ spaces.
Furthermore, using this property we provided a  characterization of          strongly pseudoconvex domain via a biholomorphic invariant  function  namely {\em generalized squeezing function}. We have proved that: For every $\alpha>0$ there  exists $\eps(d,\alpha)>0$  such that the following holds: if \(\Omega \subset \mathbb{C}^d\) is a bounded convex domain with \(\smoo^{2,\alpha}\)-boundary and
\[
T_{\Omega}^{D}(z)\geq 1-\epsilon
\]
outside a compact subset of \(\Omega\),  where  $D \St \cn$ is a balanced  strongly convex domain with $\smoo^{3}$ boundary and $T_{\Om}^{D}$ is the squeezing function of $\Om$ with respect to the domain $D$  then \(\Omega\) is strongly pseudoconvex. We also establish exponential convergence of a certain family of quasi-geodesics in the unit ball of $\mathbb{C}^{d}$. We further show that the study of this family of quasi-geodesics provides a useful tool that allows the exponential convergence property of geodesics to be transferred from local subdomains to the ambient domain, as well as in the reverse direction.

\end{abstract}

\maketitle

\section{Introduction and statements of main results}

Let $\D=\{z\in \mathbb{C}: |z|<1\}$ and  $\rho_{\D}$ be the Poincar\'e distance on $\D$. The geometric properties of the Poincar\'e distance plays a central role in hyperbolic geometry and complex analysis. In several complex variables, a natural analogue of the Poincar\'e distance is the Kobayashi distance (see Definition~\ref{Def: Kobayashi hyperbolic}). The Kobayashi distance is a fundamental holomorphic invariant of a domain in higher dimensions, and its metric-geometric properties have proved to be a powerful tool in complex geometry. For example, the Gromov hyperbolicity and visibility  of the Kobayashi distance has been successfully exploited in the study of iteration theory and the boundary extension of holomorphic maps. Since Kobayashi hyperbolic domains provide the natural higher-dimensional analogue of the Poincaré ball model $(\mathbb{D},\rho_{\mathbb{D}})$, it is natural to investigate which geometric properties of the Poincaré ball persist in higher dimensions and what consequences arise from such properties. In \cite{Bracci2005}, Bracci and Patrizio studied a property of complex geodesics in strongly convex domains related to smooth foliation of strongly convex domains induced by complex geodesics, and obtaining necessary and sufficient criteria for a map between two strongly convex domains to be biholomorphic.    

\noindent
In this paper, we study one such property, namely the exponential convergence two geodesics landing at same boundary point of bounded strongly convex domains with $\smoo^3$ boundary. Let $\Omega \subset \mathbb{C}^d$ be a Kobayashi hyperbolic domain  and  $K_{\Omega}$ denote its Kobayashi distance. An important asymptotic property of geodesics in $(\Omega,K_{\Omega})$ is the \emph{approaching geodesic property}.  A complete hyperbolic domain $\Omega \st \cn$ is said to have the {\em approaching geodesic property}, if  whenever $\gamma_1,\gamma_2:[0,\infty)\to\Omega$ are geodesic rays  with  
\[
\sup_{t\ge 0} K_{\Omega}\bigl(\gamma_1(t),\gamma_2(t)\bigr)<\infty,
\]
there exists $T\in\mathbb{R}$ such that

\begin{align*}
  \lim_{t\to\infty}
K_{\Omega}\bigl(\gamma_1(t),\gamma_2(t+T)\bigr)=0. \end{align*}

\noindent
This property has important applications in the study of the relationship between the horofunction boundary and the Gromov boundary of proper metric spaces, as well as in the iteration theory of holomorphic maps (see \cite[Theorem~3.5]{AFG2024} and \cite{AF2024}). It is known that bounded strongly pseudoconvex domains with $\mathcal{C}^{2}$ boundary and bounded convex finite-type domains satisfy the approaching geodesic property (see \cite[Theorem~5.3]{AFG2024}).

\noindent
It is well known that for  $(\D, \rho_{\D})$ satisfies a considerably stronger property than the approaching geodesic property alone. In this case, geodesic rays with same landing point at the boundary of $\D$ converge to one another  ``exponentially fast".  More precisely, if $\gamma_1,\gamma_2:[0,\infty)\to\D$ are geodesic rays initiated on the same horosphere and satisfying
\[
\gamma_1(\infty)=\gamma_2(\infty)=\xi\in\partial\D,
\]
then there exists a constant $A>0$,  such that
\[
\rho_{\D}\bigl(\gamma_1(t),\gamma_2(t)\bigr)
\leq A e^{-t}
\qquad \text{for all } t\geq 0.
\]
\noindent
The above property is not specific to the Poincaré disc it also holds for certain general negatively curved Riemannian manifold. A synthetic framework of negatively curved manifold in which a similar phenomenon occurs is provided by \rm{CAT}(-1) spaces (see Definition~\ref{Def:cat}). Indeed, it is known that if two geodesic rays in a \rm{CAT}(-1) space determine the same point in the Gromov boundary, both are initiated from same horosphere, then the distance between them decays exponentially along the rays. In other words, asymptotic geodesic rays in a \rm{CAT}(-1) space approach one another exponentially fast (see \cite[Proposition~4.4.13]{DTD2017} for details). This property has played an important role in the study of the stability of the geodesic flows in ${\rm CAT}(-1)$ spaces (see, for instance, \cite{ConsLafonThomp2020a},\cite{ConsLafoThomp2020}). Since the unit ball $\mathbb{B}^{n}$ equipped with its Kobayashi distance is a ${\rm CAT}(-1)$ space, distance between any two asymptotic geodesic rays in $\mathbb{B}^{d}$ goes to zero exponentially faster. In \cite[Theorem~2.11]{Zimmer2018}, Zimmer provided a proof of  this fact. Let us mention it as a result. 

\begin{result}\label{R:expcball}
Suppose that  $\gamma_{1}, \gamma_{2} : [0,\infty)\to \mathbb{B}^{d}$ are two geodesic rays such that $\liminf_{s,t \to \infty} K_{\mathbb{B}^{d}}\big(\gamma_{1}(s), \gamma_{2}(t)\big) < \infty$. Then  if the images of $\gamma_{1}$ and $\gamma_{2}$ are contained in the same complex geodesic, then there exists $T\in \mathbb{R}$ such that
\[
\lim_{t \to \infty} \frac{1}{t}
\log K_{\mathbb{B}^{d}}\big(\gamma_{1}(t), \gamma_{2}(t+T)\big)
= -2,
\]
otherwise
\[
\lim_{t \to \infty} \frac{1}{t}
\log K_{\mathbb{B}^{d}}\big(\gamma_{1}(t), \gamma_{2}(t+T)\big)
= -1.
\]    
\end{result}
Therefore, Result~\ref{R:expcball} shows that, for the unit ball ,  with respect to the Kobayashi distance  not merely satisfies the {\em approaching geodesic property}; their asymptotic behavior can be described in a much more quantitative manner. 

In \cite{Zimmer2018}, the exponential convergence of asymptotic geodesics in the unit ball was crucially used to study the strong pseudoconvexity property of domains in $\cn$. Motivated by this, it is natural to ask which class of Kobayashi hyperbolic domains in $\mathbb{C}^d$ exhibits exponential convergence of approaching geodesics. The main goal of this article is to declare that a bounded strongly convex domain with $\smoo^3$ boundary in $\cn $ with respect the Kobayashi distance possess the ``exponential convergence'' property. More precisely our first main result is the following:

\begin{theorem}\label{T:expcon}
  Let $\Om \st \cn$ be a bounded strongly convex domain with $\smoo^{3}$ boundary. Suppose that  $\gamma_{1}, \gamma_{2} : [0,\infty)\to \Om$ are two geodesic rays such that $\gamma_{1}(\infty)=\gamma_{2}(\infty)$. Then  if the images of $\gamma_{1}$ and $\gamma_{2}$ are contained in the same complex geodesic, then there exists $T\in \mathbb{R}$
\[
\lim_{t \to \infty} \frac{1}{t}
\log K_{\Om}\big(\gamma_{1}(t), \gamma_{2}(t+T)\big)
= -2,
\]
otherwise
\[
\lim_{t \to \infty} \frac{1}{t}
\log K_{\Om}\big(\gamma_{1}(t), \gamma_{2}(t+T)\big)
= -1.
\]    
\end{theorem}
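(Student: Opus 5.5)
The plan is to reduce the statement to Result~\ref{R:expcball} by rescaling at the boundary point $\xi=\gamma_1(\infty)=\gamma_2(\infty)$, and to control the comparison errors quantitatively. Since $\partial\Om$ is $\smoo^3$ and strongly convex, near $\xi$ we can find two balls $B_{\mathrm{in}}\subset \Om\subset B_{\mathrm{out}}$, both tangent to $\partial\Om$ at $\xi$. In a suitable Cayley-type model these balls agree with $\Om$ up to terms of order three in the normal direction, and the complex tangential curvatures can be normalized. After translating and rescaling along $\gamma_1$ (the scaling method of Pinchuk/Frankel, as used for strongly convex domains), $\Om$ converges to the unit ball. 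The key quantitative input is a localization estimate: for $z,w$ within bounded Kobayashi distance of a point $p$ with $\delta_\Om(p)=\delta$,
\[
\bigl|K_\Om(z,w)-K_{B_\delta}(z,w)\bigr|\le C\,\delta^{\beta}\,K_{B_\delta}(z,w),
\]
where $B_\delta$ is an osculating ball at the nearest boundary point and $\beta>0$ is some exponent. I would derive this from the Lempert theory of complex geodesics together with the $\smoo^3$ regularity: complex geodesics in $\Om$ through points near $\xi$ are $\smoo^{1,\alpha}$-close to those of the osculating ball, and the error is controlled by the cubic Taylor remainder of the defining function.

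First, using the Gromov hyperbolicity of $(\Om,K_\Om)$ and the approaching geodesic property (\cite[Theorem~5.3]{AFG2024}), I would choose $T$ so that $K_\Om(\gamma_1(t),\gamma_2(t+T))\to 0$; then both points lie on a common "horosphere" level. Next, since $\delta_\Om(\gamma_i(t))\asymp e^{-2t}$ along rays, the comparison above at time $t$ has relative error $O(e^{-2\beta t})$. This is negligible for exponential rates, provided the geodesics of $\Om$ and of the comparison ball stay uniformly close. I would arrange this via a Morse-lemma/stability argument: $\gamma_i$ restricted to $[t,\infty)$ is a $(1,\epsilon(t))$-quasi-geodesic in the local ball model, with $\epsilon(t)$ summable exponentially. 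This is exactly where a family of quasi-geodesics in $\mathbb{B}^d$ enters, as the abstract advertises. So I would prove a ball lemma: quasi-geodesics in $\mathbb{B}^d$ with exponentially decaying defect, landing at the same point, still converge at rate $e^{-t}$, or $e^{-2t}$ if they lie in a common complex geodesic, up to $o(t)$ in the exponent.

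For the complex-geodesic dichotomy, I would use Lempert's theorem. If both rays lie in one complex geodesic $\varphi(\D)$, then $\varphi$ is an isometry from $(\D,\rho_\D)$, so $K_\Om(\gamma_1(t),\gamma_2(t+T))=\rho_\D(\cdot,\cdot)$. Asymptotic geodesics in the Poincaré disc with that normalization converge like $e^{-2t}$ (with the paper's curvature $-4$ normalization of $K$), which gives the limit $-2$ directly, with no comparison needed. Otherwise, the complex geodesics $\varphi_1,\varphi_2$ containing $\gamma_1,\gamma_2$ are distinct and both land at $\xi$. The component of the displacement that is complex-tangential at $\xi$ decays like $\delta^{1/2}\sim e^{-t}$ in the Kobayashi metric. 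It is transverse to the holomorphic normal direction, so it gives rate $-1$. The upper bound comes from the ball lemma. The lower bound comes from the fact that distinct complex geodesics through $\xi$ separate at first order in the tangential coordinates, by uniqueness in Lempert's theory with a nondegenerate boundary derivative.

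The main obstacle is the quantitative localization and stability step: showing that the error between $K_\Om$ and the osculating ball metric decays exponentially in $t$ uniformly along the rays, fast enough not to spoil the $e^{-t}$ rate. My first attempt would be to work in the Lempert/Bracci–Patrizio foliation by complex geodesics through $\xi$. In these coordinates the rays are explicit, so $C^1$ dependence of the extremal discs on boundary data, which is available for $\smoo^3$ boundaries, yields the needed $O(e^{-ct})$ estimate.
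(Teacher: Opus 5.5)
\textbf{There is a genuine gap: the upper bound when the two rays lie in distinct complex geodesics.}

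Your ``ball lemma'' is false as stated. A curve whose restrictions to $[s,\infty)$ are $(1,ae^{-bs})$-quasi-geodesics need not approach a geodesic faster than about $e^{-bt/2}$, even inside a single complex geodesic. Take Fermi coordinates $(\tau,u)$ along a geodesic line $\sigma$ of $\D$, with metric $g(u)^2d\tau^2+du^2$ and $g(u)=1+O(u^2)$. The curve $\gamma(\tau)=(\tau,e^{-c\tau})$ satisfies $|t-s|\le K(\gamma(s),\gamma(t))\le |t-s|+Ce^{-2cs}$. The lower bound holds because nearest-point projection onto $\sigma$ is $1$-Lipschitz; the upper bound comes from the length of $\gamma$. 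The curve lands at $\sigma(\infty)$, yet $K(\gamma(t),\sigma(t'))\ge e^{-ct}$ for every $t'$.

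So a quasi-geodesic argument yields a rate exponent proportional to the defect exponent $b$. This is exactly why the paper's Theorem~\ref{P:exponetial} only gives $e^{-\frac t2\min\{1,b/2\}}$. In your scheme $b$ comes from the localization error $\delta^{\beta}$ with $\delta\asymp e^{-2t}$, so $b\approx 2\beta$. For a $\smoo^3$ boundary one can only expect $\beta=1/2$ (cubic agreement with a normalized model). Hence the Morse/stability route certifies at best a rate like $e^{-t/2}$, never the sharp exponent $-1$.

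Two further problems:
\begin{enumerate}
\item Euclidean balls tangent at $\xi$ agree with $\partial\Om$ only to first order in general (unless $\xi$ is umbilic). Third-order osculation therefore needs a local biholomorphic normalization, not a Cayley model of a ball.
\item The relative-error localization estimate, which you yourself call the main obstacle, is never proved.
\end{enumerate}

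\textbf{What is missing.} You need neither model geodesics nor relative errors. Only $\lim\frac1t\log K_\Om$ is asked for, so a two-sided bound on $K_\Om$ up to multiplicative constants, evaluated at the explicit points, suffices. The paper proceeds as follows.
\begin{enumerate}
\item Write $\gamma_j(t)=\phi_j(\tanh t)$ and use the $\smoo^{1,\alpha}$ boundary regularity ($\alpha>1/2$) to get $\phi_j(1-h)=\xi-ha_j+O(h^{1+\alpha})$.
\item Take $T=\frac12\log(\mu_2/\mu_1)$ with $\mu_j=\langle a_j,\nu_\xi\rangle>0$, so the normal parts cancel. Then $z_t-w_t=-s_tV+O(s_t^{1+\alpha})$ with $V=a_1-\lambda a_2$ complex-tangential, $\delta_\Om(z_t),\delta_\Om(w_t)\asymp s_t$, and $\langle z_t-w_t,\nu_{\pi(z_t)}\rangle=O(s_t^{1+\alpha})$.
\item The Kosi\'nski--Nikolov--\"Okten estimate (Result~\ref{R:nikolov-thomas}) then gives $K_\Om(z_t,w_t)\asymp A_\Om(z_t,w_t)\asymp s_t^{1/2}\asymp e^{-t}$.
\end{enumerate}
The lower bound uses $V\neq 0$, which follows from Result~\ref{R:Pgeodesic}(iii). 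That part of your plan was on target, as was your reduction to $\D$ when both rays lie in a common complex geodesic.

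\textbf{A possible repair.} If you want to keep a ball comparison, drop the quasi-geodesic step and evaluate the ball distance directly at $z_t,w_t$ using these Euclidean asymptotics. Even the crude sandwich $K_{B_{\mathrm{out}}}\le K_\Om\le K_{B_{\mathrm{in}}}$, with balls tangent at $\xi$, then suffices. The points approach $\xi$ non-tangentially by Result~\ref{R:Pgeodesic}(iv), and the multiplicative constants disappear in the limit.
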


\medskip

\noindent
We  have proved in Proposition~\ref{Rem:CAT} that if $\Om \Subset \mathbb{C}^d$ is a strongly convex domain with $\mathcal{C}^{\infty}$-smooth boundary, then $(\Om,K_{\Om})$ is a ${\rm CAT}(-1)$ space if and only if $\Om$ is biholomorphic to $\bn$. Theorem~\ref{T:expcon} shows that bounded strongly convex domains with $\mathcal{C}^{\infty}$-smooth boundary enjoy the exponential convergence property, even though they are not necessarily ${\rm CAT}(-1)$ spaces.

\smallskip

\noindent
Next, we turn our attention to applications of Theorem~\ref{T:expcon}. Recall that a domain $\Omega \subset \mathbb{C}^d$ with $\mathcal{C}^2$ boundary is called strongly pseudoconvex if the Levi form associated with a defining function of $\Omega$ is positive definite on the complex tangent space at every boundary point. The study of analytic and geometric properties of strongly pseudoconvex domains are central theme of several complex variables. It is a question of interest  whether the strong pseudoconvexity can be detected through intrinsic invariants of the domain. In \cite{Zimmer2018}, Zimmer obtained a characterization of strong pseudoconvexity using intrinsic continuous functions. We now mention the definition of such functions from \cite{Zimmer2018}. Let $\mathbb{X}_d$ denotes the collection of all convex domains in $\mathbb{C}^d$ that do not contain any complex affine line, and $\mathbb{X}_{d,0}$ denotes the set of pairs $(\Omega,x)$ with $\Omega \in \mathbb{X}_d$ and $x \in \Omega$. 
\begin{definition}
   A function $f : \mathbb{X}_{d,0} \to \mathbb{R}$ is called is called \emph{intrinsic} if $f(C_1,x_1)=f(C_2,x_2)$ whenever there exists a biholomorphism $\varphi:C_1\to C_2$ such that $\varphi(x_1)=x_2.$
\end{definition}

An example of such intrinsic function is the generalized squeezing function (see Section~\ref{S:prelims} for its definition and properties)
\[
T_D:\mathbb{X}_{d,0}\to\mathbb{R},
\qquad
(\Omega,z)\mapsto T_D^{\Omega}(z),
\]
where $D\subset\mathbb{C}^d$ is a fixed bounded, balanced, convex domain. We refer to  \cite{Zimmer18Gap} for other examples of intrinsic  function.
Applying Result~\ref{R:expcball}, Zimmer  provided the following characterization of strong pseudoconvexity 
\begin{result}\label{R:Zimm}
Suppose that $f:\mathbb{X}_{d,0}\to\mathbb{R}$
is a continuous intrinsic function with the following property: if
$C\in\mathbb{X}_d$ and $
f(C,x)=f(\mathbb{B}^d,0)
$ for all $x\in C$, then $C$ is biholomorphic to $\mathbb{B}^d$. Then for every $\alpha>0$ there exists
$
\varepsilon=\varepsilon(d,f,\alpha)>0
$
such that the following holds. If $C\subset\mathbb{C}^d$ is a bounded convex domain with
$\mathcal{C}^{2,\alpha}$ boundary and
\[
\big|f(C,z)-f(\mathbb{B}^d,0)\big|\le \varepsilon
\]
outside some compact subset of $C$, then $C$ is strongly pseudoconvex. In particular,
\[
\lim_{z\to\partial C} f(C,z)=f(\mathbb{B}^d,0).
\]
\end{result}
Our next theorem establishes that, conclusion of the above result remains valid when the unit ball is replaced by an arbitrary fixed bounded balanced strongly convex domain with $\mathcal{C}^3$ boundary. More precisely, we have shown that
\begin{theorem}\label{T:Application}
Let $(\Om,0) \in \mathbb{X}_{d,0}$ and $\Om$ be a bounded strongly convex domain with $\smoo^{3}$  boundary. Suppose that $f : \mathbb{X}_{d,0} \to \mathbb{R}$ is a continuous intrinsic function with the following property: if $\mathcal{D} \in \mathbb{X}_d$ and
\[
f(\mathcal{D},x)=f(\Om,0)
\]
for all $x\in \mathcal{D}$, then $\mathcal{D}$ is biholomorphic to $\Om$. Then for any $\alpha>0$ there exists some $\epsilon=\epsilon(d,f,\alpha)>0$ such that: if $\mathcal{D}\subset \mathbb{C}^d$ is a bounded convex domain with $\smoo^{2,\alpha}$ boundary and
\[
\left|f(\mathcal{D},z)-f(\Om,0)\right|\leq \epsilon
\]
outside some compact subset of $\mathcal{D}$, then $\mathcal{D}$ is strongly pseudoconvex and thus
\[
\lim_{z\to \partial \mathcal{D}} f(\mathcal{D},z)=f(\Om,0).
\]
\end{theorem}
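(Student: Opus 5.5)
We follow Zimmer's strategy for Result~\ref{R:Zimm}, replacing the ball by $\Om$ and Result~\ref{R:expcball} by Theorem~\ref{T:expcon}. The argument is by contradiction. Suppose no such $\epsilon$ exists. Then there are bounded convex domains $\mathcal{D}_n$ with $\smoo^{2,\alpha}$ boundary such that $|f(\mathcal{D}_n,z)-f(\Om,0)|\le 1/n$ outside a compact set, yet each $\mathcal{D}_n$ has a boundary point $\xi_n$ where the Levi form degenerates.

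\textbf{Step 1: a first rescaling at a weakly pseudoconvex point.} Rescale each $\mathcal{D}_n$ at $\xi_n$ by affine maps, in the style of Pinchuk and Frankel, as in Zimmer's work on $\mathbb{X}_{d,0}$. The uniform $\smoo^{2,\alpha}$ control and the degeneracy of the Levi form at $\xi_n$ should give a limit $(\mathcal{D}_n,z_n)\to(C,x)$ in the local Hausdorff topology on $\mathbb{X}_{d,0}$. The limit $C$ is a convex domain of the polynomial-type form
\[
\{ \rl(w_1) > P(w') \},
\]
with $P$ degenerate in some complex direction, and $C$ contains no complex line.

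\textbf{Step 2: the limit is biholomorphic to $\Om$.} Any point $y\in C$ is the limit of points $y_n\in\mathcal{D}_n$ that escape every compact subset of $\mathcal{D}_n$. By continuity and intrinsicness of $f$, we get $f(C,y)=\lim f(\mathcal{D}_n,y_n)=f(\Om,0)$ for every $y\in C$. The hypothesis on $f$ then gives that $C$ is biholomorphic to $\Om$. In particular $(C,K_C)$ is isometric to $(\Om,K_\Om)$, so Theorem~\ref{T:expcon} holds in $C$: asymptotic geodesic rays converge at exponential rate $1$, or $2$ when they lie in a common complex geodesic.

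\textbf{Step 3: contradiction via geodesics.} Construct explicit geodesic rays in $C$ heading to $\infty$ along the $\rl(w_1)$ direction. Because $C$ is invariant under translations $w_1\mapsto w_1+it$ and under anisotropic dilations adapted to $P$, we can take two rays, one the translate of the other in the degenerate complex direction $v$. They are asymptotic, and their Kobayashi distance can be estimated from below using the standard lower bound for convex domains, $k_C(p;v)\ge |v|/(2\delta_C(p;v))$. The degeneracy of $P$ in the direction $v$ makes $\delta_C(\cdot;v)$ grow too fast along the rays. Consequently the distance between the rays decays at most polynomially, or exponentially with rate strictly less than $1$. This contradicts the rate guaranteed by Theorem~\ref{T:expcon}. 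Once strong pseudoconvexity is established, the final limit statement follows by rescaling at strongly pseudoconvex points, which produces the ball. I would check that this identification is consistent: it forces $f(\mathbb{B}^d,0)=f(\Om,0)$.

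\textbf{Main obstacle.} The hard step is Step 3: making the rate comparison rigorous when $C$ is only known abstractly to be biholomorphic to $\Om$. We must show that the chosen rays really are geodesics, or at least quasi-geodesics to which the exponential-convergence conclusion transfers. This is where I expect to need the quasi-geodesic machinery the paper develops for $\bn$ and transfers to $\Om$. The first thing I would try is to use the real line $\{w_1\in(0,\infty), w'=0\}$ together with its translate by $\epsilon v$, and to control the Kobayashi distance via the complex geodesic $\{w'=0\}$ and its holomorphic retraction.
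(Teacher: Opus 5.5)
Your outline matches the paper's: argue by contradiction, blow up, use continuity and intrinsicness to get $f\equiv f(\Om,0)$ on a limit domain (so the limit is biholomorphic to $\Om$), then contradict the rate in Theorem~\ref{T:expcon} using the convex lower bound on the Kobayashi metric. Two steps, however, would fail as written.

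\textbf{Step 1.} There is no uniform $\smoo^{2,\alpha}$ control across $n$. Since $\epsilon$ may depend only on $(d,f,\alpha)$, the $\mcd_n$ have arbitrary H\"older norms, and only the exponent $\alpha$ is uniform. A weakly pseudoconvex $\smoo^{2,\alpha}$ point may be of infinite type, so there is in general no polynomial model $\{\rl(w_1)>P(w')\}$. Moreover, a qualitative property like ``$P$ degenerate in some direction'' need not survive a local Hausdorff limit of different domains: rescaling each $\mcd_n$ at an ill-chosen scale can produce the Siegel domain, where nothing is contradicted.

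What is needed is a normalized blow-up carrying a scale-free estimate that depends only on $\alpha$. The paper obtains this in two stages. For each fixed $n$, \cite[Proposition~5.8]{Zimmer2018} gives $\mcd_{n,\infty}\in{\rm BlowUp}(\mcd_n)$ in $ie_1+\mathbb{K}_d$ such that:
\begin{itemize}
\item $\mcd_{n,\infty}\cap{\rm Span}_{\cplx}\{e_2,\dots,e_d\}=\emptyset$;
\item $\mcd_{n,\infty}\cap\cplx e_1=\{ze_1:{\rm Im}\,z>0\}$;
\item $\partial_{\mcd_{n,\infty}}(e^rie_1;e_2)\le e^{r/(2+\alpha)}$ for $r\ge1$.
\end{itemize}
The blow-up base points escape compact subsets of the \emph{fixed} domain $\mcd_n$, so $|f(\mcd_{n,\infty},\cdot)-f(\Om,0)|\le 1/n$ everywhere. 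Compactness of $\mathbb{K}_{d,0}$ then gives a second limit $\mcd_\infty\cong\Om$ that inherits all three properties.

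\textbf{Step 3.} The direction is reversed. The lower bound $K\gtrsim 1/\partial_{\mcd}(\cdot\,;v)$ contradicts a rate-one upper bound only if $\partial_{\mcd}(\cdot\,;v)$ grows too \emph{slowly} along the ray. If it grew too fast, there would be no contradiction. The correct statement is Lemma~\ref{L:AppLemma}. Take $\gamma(t)=ie^{2t}e_1$ and $\gamma_v(t)=\delta v+(\alpha_{\delta v}+e^{2t})ie_1$. Theorem~\ref{T:expcon} gives $K_{\mcd}(\gamma(t),\gamma_v(t))\le A_ve^{-t}$, while the convex lower bound gives $\gtrsim 1/\partial_{\mcd}(e^{2t}ie_1;v)$. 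Hence any such normalized domain biholomorphic to $\Om$ has $\liminf_{r\to\infty}\frac1r\log\partial_{\mcd}(ie^re_1;v)\ge\frac12$, which contradicts the bound $\frac1{2+\alpha}$ from the blow-up.

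The obstacle you flag (geodesicity) is elementary and needs no quasi-geodesic machinery. Each slice $\mcd\cap(v+\cplx e_1)=\{ze_1+v:{\rm Im}\,z>\alpha_v\}$ is a half-plane. A real supporting hyperplane at $v+i\alpha_ve_1$ must contain the boundary line $v+i\alpha_ve_1+\re e_1$. Its complex-linear functional maps $\mcd$ into a half-plane and is an affine isomorphism on the slice. So the slices are complex geodesics, and $\gamma,\gamma_v$ are geodesic rays.

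What you do need to check is the time shift. Theorem~\ref{T:expcon} controls $K(\gamma_1(t),\gamma_2(t+T))$ only for one specific $T$. Here the projection $z\mapsto z_1$ onto the upper half-plane gives $\liminf_t K_{\mcd}(\gamma(t),\gamma_v(t+T))\ge|T|$, so that shift must be $T=0$.

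Finally, your consistency check is well taken and goes beyond the paper. For strongly pseudoconvex $\mcd$, rescaling gives $f(\mcd,z)\to f(\bn,0)$. Since $f(\bn,\cdot)$ is constant, $f(\bn,0)=f(\Om,0)$ would force $\Om\cong\bn$ by the hypothesis on $f$. So for $\Om\not\cong\bn$ the final limit assertion holds only after shrinking $\epsilon$ below $|f(\bn,0)-f(\Om,0)|$. At that point it holds only vacuously, because no such $\mcd$ exists. The paper's proof establishes only the strong pseudoconvexity.
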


We also have the following  theorem which is similar to that of \cite[Theorem~5.6]{Zimmer2018}.

\begin{theorem}\label{T:app2}
Let $\Om \st \cn$ be any bounded, strongly convex domain with $\smoo^{3}$ boundary containing the origin. Suppose that $f : \mathbb{X}_{d,0} \to \mathbb{R}$ is an upper semi-continuous intrinsic function with the following property: $$  \mcd \in \mathbb{X}_d~~ {\rm and}~f(\mcd,x)\geq f(\Om,0)\quad
\forall x\in \mcd \impl \mcd \cong \Om.$$ Then for any \(\alpha>0\) there exists some $\epsilon=\epsilon(d,f,\alpha)>0$
such that the following holds: If \(\mcd\subset \mathbb{C}^d\) is a bounded convex domain with \(\smoo^{2,\alpha}\)-boundary and $$f(\mcd,z)\geq f(\Om,0)-\epsilon$$ outside some compact subset of \(\mcd\), then \(\mcd\) is strongly pseudoconvex.
\end{theorem}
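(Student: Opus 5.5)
We follow the scheme of \cite[Theorem~5.6]{Zimmer2018}, replacing the ball by $\Om$ and Result~\ref{R:expcball} by Theorem~\ref{T:expcon}. We argue by contradiction. Suppose no such $\epsilon$ exists. Then there are bounded convex domains $\mcd_n$ with $\smoo^{2,\alpha}$ boundary and compact sets $K_n\St \mcd_n$ such that
\[
f(\mcd_n,z)\ge f(\Om,0)-1/n \quad\text{for } z\in \mcd_n\setminus K_n,
\]
but no $\mcd_n$ is strongly pseudoconvex. Thus each $\partial\mcd_n$ has a point $\xi_n$ where the Levi form degenerates in some complex tangent direction.

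\emph{Rescaling.} Since $\mcd_n$ is convex, we apply Pinchuk/Frankel-type affine rescaling along the inner normal at $\xi_n$. After affine normalization (which does not change $f$, since $f$ is intrinsic), the rescaled pointed domains $(\mcd_{n,k},x_{n,k})$, with base points tending to $\xi_n$, converge in the local Hausdorff topology to a limit $(\mcd_{n,\infty},x)\in\mathbb{X}_{d,0}$. Every point of $\mcd_{n,\infty}$ is a limit of points of $\mcd_n$ that eventually leave $K_n$. Upper semicontinuity of $f$ on $\mathbb{X}_{d,0}$ then gives
\[
f(\mcd_{n,\infty},y)\ge f(\Om,0)-1/n \quad\text{for all } y .
\]
Next we let $n\to\infty$ and take a diagonal subsequence. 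Compactness of $\mathbb{X}_{d,0}$ modulo affine maps gives a limit $\mcd_\infty\in\mathbb{X}_d$ with $f(\mcd_\infty,\cdot)\ge f(\Om,0)$ everywhere. The hypothesis then gives $\mcd_\infty\cong\Om$.

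\emph{Obtaining a contradiction via geodesics.} This step uses Theorem~\ref{T:expcon} and is where the $\smoo^{2,\alpha}$ assumption enters. Weak pseudoconvexity in a direction at $\xi_n$ means that along that direction the boundary is flatter than quadratic, $|\rho|\lesssim |z|^{2+\alpha}$ up to anisotropy. In the rescaled limits this should survive in the following form. There are two geodesic rays in $\mcd_\infty$ ending at the same boundary point, in different complex geodesics. Their Kobayashi distance decays like $e^{-\beta t}$ with $\beta\le 1-c(\alpha)<1$, or, following Zimmer's approach, at least not with rate $-1$. Zimmer's estimate comparing the Kobayashi distance with the Euclidean boundary geometry, using $\smoo^{2,\alpha}$ regularity and Hölder control of the defining function, gives a quantitative lower bound on $K(\gamma_1(t),\gamma_2(t+T))$ that persists in the limit under local Hausdorff convergence. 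On the other hand, $\mcd_\infty\cong\Om$, and Kobayashi distance and geodesics are biholomorphic invariants. Theorem~\ref{T:expcon} forces the limit $\frac1t\log K=-1$ for rays in distinct complex geodesics and $-2$ within one. This contradicts the lower bound.

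\emph{Main obstacle.} The hard step is making the last step rigorous. We must produce geodesic rays in $\mcd_\infty$ whose asymptotic convergence rate is quantitatively slower than $e^{-t}$, and show that this slowness passes to limits. Theorem~\ref{T:expcon} is stated for $\Om$ itself, with its boundary. So we must transport the rays through the biholomorphism $\mcd_\infty\to\Om$. Since the rays are defined intrinsically, geodesics map to geodesics and rays with a common endpoint, i.e.\ bounded distance apart, map to such rays. Hence the rate statement transfers directly, and only the lower-bound construction in the degenerate direction requires genuine work. For this we plan to adapt Zimmer's argument with the $\alpha$-dependent exponent $2+\alpha$ in place of quadratic growth.
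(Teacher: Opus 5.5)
Your outline matches the paper's: a contradiction sequence, blow-ups, upper semicontinuity used in the correct direction, a limit $\mcd_\infty\cong\Om$, and a contradiction with Theorem~\ref{T:expcon}. (The paper just reruns the proof of Theorem~\ref{T:Application} with inequalities.) However, the decisive step is left undone, and the route you sketch for it is not the one that works. The paper never carries Kobayashi-distance information through limits. Instead it uses \cite[Proposition~5.8]{Zimmer2018} to choose, for each $n$, a blow-up $\mcd_{n,\infty}$ in a fixed normal form:
\begin{itemize}
\item $\mcd_{n,\infty}\in ie_1+\mathbb{K}_d$;
\item $\mcd_{n,\infty}\cap{\rm Span}_{\cplx}\{e_2,\dots,e_d\}=\emptyset$;
\item $\mcd_{n,\infty}\cap\cplx e_1=\{ze_1:{\rm Im}\,z>0\}$;
\item the purely Euclidean bound $\partial_{\mcd_{n,\infty}}(e^rie_1;e_2)\le e^{r/(2+\alpha)}$ for $r\ge 1$, independent of $n$.
\end{itemize}
The last bound is where the $\smoo^{2,\alpha}$ hypothesis enters. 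Since $ie_1+\mathbb{K}_d$ is compact, one passes to $\mcd_\infty$ without any further renormalization, and these Euclidean properties survive directly under local Hausdorff convergence. Your step ``compactness of $\mathbb{X}_{d,0}$ modulo affine maps'' discards exactly this normalization, unless you specify which affinely normalized degeneracy is being kept.

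The contradiction itself is Lemma~\ref{L:AppLemma}, and that is what you would actually have to prove. In a domain $\mcd$ with these properties, the curves $\gamma(t)=ie^{2t}e_1$ and $\gamma_v(t)=\delta v+(\alpha_{\delta v}+e^{2t})ie_1$ are geodesic rays lying in half-plane slices parallel to $\cplx e_1$, and $K_{\mcd}(\gamma(t),\gamma_v(t))\to 0$ (as in \cite[Proposition~2.1]{Zimmer2018}). If $\mcd\cong\Om$, Theorem~\ref{T:expcon}, transported by the biholomorphism, gives $K_{\mcd}(\gamma(t),\gamma_v(t))\le A_ve^{-t}$. At a reparametrized time $s_{t,v}$, which shifts the distance by only $O(e^{-2t})$, one has $K_{\mcd}(\gamma(t),\gamma_v(s_{t,v}))\ge C/\partial_{\mcd}(e^{2t}ie_1;v)$, and this lower bound uses nothing but convexity. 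Together these force $\liminf_{r\to\infty}\frac1r\log\partial_{\mcd}(ie^re_1;v)\ge\frac12$, contradicting the exponent $\frac1{2+\alpha}$ inherited by $\mcd_\infty$.

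This shows two problems in your sketch. First, $\smoo^{2,\alpha}$ or H\"older control plays no role in the Kobayashi lower bound; $\mcd_\infty$ has no a priori boundary regularity anyway. Second, your claim that a lower bound on $K$ ``persists under local Hausdorff convergence'' is unjustified, because the rays and constants vary with $n$. The paper avoids both issues by passing only Euclidean data to the limit and applying Theorem~\ref{T:expcon} once, on $\mcd_\infty$. Finally, only the upper bound $e^{-t}$ from Theorem~\ref{T:expcon} is needed, so it does not matter whether the rays lie in distinct complex geodesics.
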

\smallskip
\noindent
We observed in Lemma~\ref{L:gneralsqu} that the generalized squeezing function \(T^{D}_{\Omega}\), associated to a domain \(\Omega \subset \mathbb{C}^{d}\) with respect to a bounded, balanced,  convex domain \(D\), is an upper semicontinuous intrinsic function on \(\mathbb{X}_{d,0}\). Consequently, Theorem~\ref{T:app2} immediately yields the following characterization of strong pseudoconvexity for domains in \(\mathbb{C}^{d}\).
\begin{corollary}\label{Cor:cor1}
   Let $D \St \cn$ be a balanced, strongly convex domain with $\smoo^3$ boundary. Then for any \(d \geq 2\) and \(\alpha > 0\), there exists some $\epsilon=\epsilon(d,\alpha)>0$ such that the following holds: If \(\Omega \subset \mathbb{C}^d\) is a bounded convex domain with \(\smoo^{2,\alpha}\)-boundary and
\[
T_{\Omega}^{D}(z)\geq 1-\epsilon
\]
outside a compact subset of \(\Omega\), then \(\Omega\) is strongly pseudoconvex.
\end{corollary}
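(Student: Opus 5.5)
The plan is to deduce Corollary~\ref{Cor:cor1} directly from Theorem~\ref{T:app2}, taking $f=T_D$, that is $f(\mcd,z)=T^{D}_{\mcd}(z)$, and taking $D$ itself as the model domain in place of $\Om$.

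\textbf{Step 1: hypotheses on $D$ and $f$.} Since $D$ is balanced, it contains the origin. It is also bounded and strongly convex with $\smoo^3$ boundary, so $(D,0)\in\mathbb{X}_{d,0}$ and $D$ is admissible as the model domain in Theorem~\ref{T:app2}. By Lemma~\ref{L:gneralsqu}, $f$ is an upper semicontinuous intrinsic function on $\mathbb{X}_{d,0}$.

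\textbf{Step 2: normalisation.} We need $f(D,0)=T^{D}_{D}(0)=1$. The identity map $D\to D$ realises the maximal value of the squeezing function, and by definition $T^{D}_{\cdot}\le 1$. Hence $T^D_D\equiv 1$, and in particular $f(D,0)=1$.

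\textbf{Step 3: rigidity hypothesis.} Suppose $\mcd\in\mathbb{X}_d$ satisfies $T^{D}_{\mcd}(x)\ge 1$, hence $T^{D}_{\mcd}(x)=1$, for all $x\in\mcd$. We must show $\mcd\cong D$.

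Fix $x\in\mcd$. By definition of the squeezing function there are holomorphic embeddings $\varphi_j:\mcd\to D$ with $\varphi_j(x)=0$ and $r_jD\subset\varphi_j(\mcd)\subset D$, where $r_j\to 1$.

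The $\varphi_j$ take values in the bounded set $D$, so Montel's theorem gives a subsequence converging locally uniformly to some $\varphi:\mcd\to\overline{D}$ with $\varphi(x)=0$. The maximum principle, applied to the Minkowski functional of the convex set $D$, shows that $\varphi$ maps into $D$.

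For injectivity and surjectivity, look at the inverses $\psi_j=\varphi_j^{-1}$ restricted to $r_jD$. These map into $\mcd$, which is bounded because $\mcd\in\mathbb{X}_d$ admits an injective holomorphic map into the bounded domain $D$. (Alternatively one can use Kobayashi hyperbolicity together with the taut property of convex domains containing no complex line.) So the $\psi_j$ also converge on compacta of $D$, to a limit $\psi$ with $\psi(0)=x$. By Hurwitz/Cartan-type arguments and tautness of $\mcd$, $\psi$ maps into $\mcd$ and $\varphi\circ\psi=\mathrm{id}_D$. Similarly $\psi\circ\varphi=\mathrm{id}$ on $\mcd$. Hence $\varphi$ is a biholomorphism.

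\textbf{Main obstacle.} The delicate point in Step 3 is the nondegeneracy of the limits, i.e.\ ensuring the limit maps do not escape to the boundary. This is handled by the normalisations $\varphi_j(x)=0$ and $\psi_j(0)=x$ combined with tautness of bounded convex domains. This step may already be contained in Lemma~\ref{L:gneralsqu}, in which case it is simply cited.

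\textbf{Step 4: conclusion.} Theorem~\ref{T:app2} now provides $\epsilon=\epsilon(d,f,\alpha)$; since $f$ is determined by $D$, this $\epsilon$ depends only on $d$, $\alpha$ and the fixed $D$. Let $\Omega$ be a bounded convex domain with $\smoo^{2,\alpha}$ boundary such that $T^D_\Omega\ge 1-\epsilon$ outside a compact subset of $\Omega$. Since $f(D,0)=1$, this is exactly the condition $f(\Omega,z)\ge f(D,0)-\epsilon$ outside a compact set. Theorem~\ref{T:app2} then yields that $\Omega$ is strongly pseudoconvex.
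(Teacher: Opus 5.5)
Your proposal is correct and follows the paper's route. The paper obtains the corollary in one line by applying Theorem~\ref{T:app2} with $f=T^{D}$ and model domain $D$, citing Lemma~\ref{L:gneralsqu} for upper semicontinuity. Your Steps~2--3 just make explicit two things the paper leaves implicit: the normalisation $T^{D}_{D}(0)=1$, and the rigidity that $T^{D}_{\mcd}(x)=1$ at a single point forces $\mcd\cong D$.

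Two slips are harmless but should be fixed. First, $T^{D}_{D}\equiv 1$ fails unless $D$ is homogeneous; only the value at $0$ is needed. Second, an injective holomorphic map into a bounded domain does not make $\mcd$ bounded (the Siegel half-space is unbounded yet biholomorphic to $\bn$). So the convergence of the $\psi_j$ must come from your tautness alternative, namely Barth's completeness of $K_{\mcd}$ together with the normalisation $\psi_j(0)=x$.
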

\noindent
It is worth mentioning that there exists a bounded, balanced, strongly convex domains with $\mathcal{C}^{\infty}$-smooth boundary that are not biholomorphic to the unit ball (see \cite[Example~1]{CG2025b}). Therefore, Theorem~\ref{T:Application} and Corollary~\ref{Cor:cor1} extend Result~\ref{R:Zimm} and \cite[Theorem~1.7]{Zimmer2018}.

\medskip

\noindent
We now discuss exponential convergence of two quasi-geodesics in $\bn$ landing at the same boundary point.  In a Gromov hyperbolic space, the Morse lemma asserts that every quasi-geodesic joining two points remains within a uniformly bounded Hausdorff distance of any geodesic connecting the same endpoints. This naturally raises the question of whether an analogous phenomenon holds asymptotically for quasi-geodesic rays landing at same end point. Our next result provides an affirmative answer for a certain class of quasi-geodesic rays in the unit ball of $\mathbb{C}^{d}$ by showing that they converge to each other at an exponential rate. We have proved that
\begin{theorem}\label{P:exponetial}
    Let $\gamma_{1}, \gamma_{2}:[0, \infty) \to \bn$ be two rays such that for all $s\geq 0$, $\gamma_{j}|_{[s, \infty)}$ is $(1, a e^{-b s})$ quasi-geodesics for $j=1,2$ for some $a, b>0$.  Assume that $\gamma_1(\infty) = \gamma_2(\infty) = \xi \in \partial \bn$. Then there exist $\alpha >0$ and $T_{0} \in \mathbb{R}$  such that 
    \begin{align*}
       K_{\bn}(\gamma_{1}(t), \gamma_{2}(t+T_{0}))\leq \alpha\cdot e^{-\frac{t}{2}\cdot\min\{1,  ~b/2\}} \hspace{1 em} \forall t \geq \max\{0,-T_{0}\}. 
    \end{align*}
    \end{theorem}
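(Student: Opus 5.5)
The plan is to compare each quasi-geodesic $\gamma_j$ with a genuine geodesic ray $\sigma_j$ landing at $\xi$, and then to use exponential convergence of geodesics in $\bn$. That last input is Result~\ref{R:expcball}, or more precisely its quantitative form: rate at least $e^{-t}$ for asymptotic rays after a suitable time shift. The argument then rests on a single estimate,
\[
K_{\bn}(\gamma_j(t),\sigma_j(t))\le C e^{-\frac{t}{2}\min\{1,b/2\}},
\]
after which the triangle inequality finishes the proof:
\[
K_{\bn}(\gamma_1(t),\gamma_2(t+T_0))\le K(\gamma_1(t),\sigma_1(t))+K(\sigma_1(t),\sigma_2(t+T_0))+K(\sigma_2(t+T_0),\gamma_2(t+T_0)).
\]
The middle term decays like $e^{-t}$ once $T_0$ is the shift supplied by Result~\ref{R:expcball} for the pair $\sigma_1,\sigma_2$.

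\emph{Constructing the comparison ray.} Fix $j$. For each $n$, let $\sigma_j^n$ be the geodesic segment from $\gamma_j(0)$ to $\gamma_j(n)$, parametrized by arc length. Since $\bn$ is proper, Arzel\`a--Ascoli gives a subsequence converging to a geodesic ray $\sigma_j$ from $\gamma_j(0)$. Gromov hyperbolicity and the Morse lemma for the $(1,a)$ quasi-geodesic $\gamma_j$ keep $\gamma_j$ at bounded Hausdorff distance from $\sigma_j$, so $\sigma_j(\infty)=\xi$.

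\emph{Improving bounded distance to exponential decay.} Fix $s$ and apply the argument on $[s,\infty)$, where $\gamma_j$ is a $(1,\eps_s)$ quasi-geodesic with $\eps_s=ae^{-bs}$. The ball is CAT$(-1)$, and with additive constant $\eps_s$ the quasi-geodesic is nearly geodesic. For $s\le t\le u$, consider the comparison triangle with vertices $\gamma_j(s),\gamma_j(u)$ and $\gamma_j(t)$. The Gromov product satisfies $(\gamma_j(s)|\gamma_j(u))_{\gamma_j(t)}\le \eps_s$. In CAT$(-1)$ geometry (via the hyperbolic plane comparison), a point whose Gromov product with respect to the endpoints is $\le\eps$ lies within distance $O(\sqrt{\eps})$ of the geodesic segment $[\gamma_j(s),\gamma_j(u)]$ near its interior, since a thin hyperbolic triangle with small defect has height of order $\sqrt{\eps}$. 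This yields a geodesic segment $\tau_s$ from $\gamma_j(s)$ toward $\xi$ with
\[
K(\gamma_j(t),\tau_s(t-s))\lesssim \sqrt{a}\,e^{-bs/2}\quad\text{for all } t\ge s .
\]
The square root is the source of the $b/2$ factor.

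Next, compare $\tau_s$ with $\sigma_j$. Both are geodesic rays ending at $\xi$, and their starting points are within $O(1)$ of each other in the horofunction sense. By CAT$(-1)$ convexity, the distance between them at time $t$ decays like $e^{-(t-s)}$ times the initial discrepancy, up to a fixed shift. Moreover, the shifts determined by the various $s$ stabilize, because the Busemann function $B_\xi$ along $\gamma_j$ satisfies $B_\xi(\gamma_j(t))=-t+c+O(e^{-bt})$, and the constant $c$ fixes the shift. So at time $t$ the two error sources are $e^{-bs/2}$ from the quasi-geodesic and $e^{-(t-s)}$ from the geodesic convergence. Choosing $s=t/2$ balances them and gives
\[
K(\gamma_j(t),\sigma_j(t+c_j))\lesssim e^{-bt/4}+e^{-t/2}\le C e^{-\frac{t}{2}\min\{1,b/2\}} .
\]
Absorbing the shifts $c_j$ into $T_0$ completes the argument.

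The main obstacle will be the \emph{CAT$(-1)$ thin-triangle estimate}: showing that a $(1,\eps)$ quasi-geodesic stays within $O(\sqrt{\eps})$ of geodesics uniformly along an infinite ray, and doing the shift bookkeeping so that the constant $c$ is well defined. I would first try to run all comparisons in the explicit hyperbolic plane via Alexandrov comparison, computing the height of a triangle with small Gromov product there.
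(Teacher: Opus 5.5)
Your proposal is correct and follows essentially the paper's proof. The paper uses the same three ingredients and the same balancing at time $t=2s$:
\begin{itemize}
\item the comparison-triangle height bound $O\bigl(\sqrt{\langle\cdot\mid\cdot\rangle}\bigr)$, which gives the $e^{-bs/2}$ error for the ray issuing from $\gamma_j(s)$;
\item the Heintze--Im Hof $e^{-s}$ decay between asymptotic rays in the CAT$(-1)$ ball;
\item Busemann-function stabilization of the time shifts, with error $O(e^{-\min\{1,b\}s})$.
\end{itemize}
The difference is only bookkeeping. The paper compares the rays from $\gamma_1(s)$ and $\gamma_2(s)$ directly, whereas you first shadow each $\gamma_j$ by a fixed ray $\sigma_j$ from $\gamma_j(0)$ and then compare $\sigma_1$ with $\sigma_2$.
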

    
    \smallskip
    
\begin{remark}
 The key ingredient in the proof of Theorem~\ref{P:exponetial} is the fact that $(\bn,\kb)$ is a ${\rm CAT}(-1)$ space. In fact, the same conclusion remains valid for $(1,ae^{-bs})$ quasi-geodesic in any   ${\rm CAT}(-1)$  space satisfying the analogous hypothesis. 
\end{remark}
 We now highlight the significance of the exponential convergence of the special class of quasi-geodesics established in Theorem~\ref{P:exponetial}. In particular, the exponential convergence of the family of  quasi-geodesics in $\bn$  considered in Theorem~\ref{P:exponetial} provides a useful mechanism for constructing new subdomains of the ball that enjoy the exponential convergence property for geodesics terminating at certain  boundary points. Our next theorem  provides such a class of domains.
\begin{theorem}
    Let $\xi\in\partial\bn$ and let $U_{\xi}$ be an open neighbourhood of $\xi$ such that $B':=\bn\cap U_{\xi}$ is connected. Let $\gamma_1,\gamma_2:[0,\infty)\longrightarrow B'$ be two geodesic rays satisfying $\gamma_1(\infty)=\gamma_2(\infty)=\xi.$ Then there exist constants $a,b>0$ and $T\in\re$ such that
\begin{equation*}
K_{B'}\bigl(\gamma_1(t),\gamma_2(t+T)\bigr)
    \le a e^{-bt}
    \qquad t> \max\{0, -T\}.
\end{equation*}
\end{theorem}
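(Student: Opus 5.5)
The plan is to move the problem from the local domain $B'$ to the ambient ball, apply Theorem~\ref{P:exponetial} there, and then move the estimate back to $B'$. The bridge is a quantitative localization of the Kobayashi distance near $\xi$. Near a strongly pseudoconvex boundary point, for a smaller neighbourhood $V\Subset U_\xi$ of $\xi$, one has
\[
K_{\bn}(z,w)\le K_{B'}(z,w)\le K_{\bn}(z,w)+C\,\bigl(\delta(z)+\delta(w)\bigr)^{c}\qquad z,w\in V\cap\bn ,
\]
where $\delta$ denotes distance to $\partial\bn$. The localization estimates of Forstnerič--Rosay and Balogh--Bonk type should give an exponent $c$ close to $1/2$; any $c>0$ will do. The first inequality is just monotonicity, since $B'\subset\bn$.

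\textbf{Step 1.} Since $\gamma_j(t)\to\xi$, each $\gamma_j(t)$ lies in $V$ for $t\ge t_0$. Along a geodesic ray landing at a boundary point, $\delta(\gamma_j(t))\le Ce^{-2t}$, up to an additive shift. This follows from the standard estimate $K_{B'}(o,z)\ge -\tfrac12\log\delta(z)-C$ on the strongly pseudoconvex part of $\partial B'$ near $\xi$, together with $K_{B'}(o,\gamma_j(t))=t+O(1)$.

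\textbf{Step 2.} Show that each $\gamma_j$ is a $(1,ae^{-bs})$ quasi-geodesic in $(\bn,K_{\bn})$ on $[s,\infty)$. For $s\le t_1\le t_2$,
\[
|t_1-t_2|=K_{B'}(\gamma_j(t_1),\gamma_j(t_2))\le K_{\bn}(\gamma_j(t_1),\gamma_j(t_2))+C(e^{-2t_1}+e^{-2t_2})^{c}\le K_{\bn}(\cdot,\cdot)+Ce^{-2cs}.
\]
The reverse inequality $K_{\bn}\le K_{B'}=|t_1-t_2|$ is free. The initial segment $[0,t_0]$ is absorbed by enlarging $a$.

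\textbf{Step 3.} Apply Theorem~\ref{P:exponetial} with $b=2c$. This gives $T_0$ and the bound $K_{\bn}(\gamma_1(t),\gamma_2(t+T_0))\le\alpha e^{-\beta t}$ for some $\beta>0$.

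\textbf{Step 4.} Transfer back using the localization inequality:
\[
K_{B'}(\gamma_1(t),\gamma_2(t+T_0))\le \alpha e^{-\beta t}+C e^{-2ct}.
\]
This gives the claim with $T=T_0$ and $b=\min\{\beta,2c\}$, after adjusting constants on a bounded initial range where $\gamma_j(t)\notin V$.

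The main obstacle is the localization estimate with an additive error that decays as a power of $\delta$, rather than the usual multiplicative $1+o(1)$ comparison for the infinitesimal metric. I would first try to derive it by integrating the infinitesimal comparison $k_{B'}\le(1+C\delta^{c'})k_{\bn}$ near $\xi$ along the $K_{\bn}$-geodesic joining $z,w$. That geodesic stays within $V$ and within distance about $\max(\delta(z),\delta(w))$ of its endpoints' depth in a controlled way. Along it, $\delta$ decays exponentially in arclength away from the top point, so $\int\delta^{c'}\,k_{\bn}$ is bounded by $C(\delta(z)+\delta(w))^{c''}$ plus a term at the top of the geodesic. Controlling that top term requires $K_{\bn}(z,w)$ to be small, or the geodesic to stay deep. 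This is why I would apply it only to pairs with $z,w$ both close to $\xi$ and also use a Gromov-product bound so that the connecting $\bn$-geodesic remains near $\xi$.
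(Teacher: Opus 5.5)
Your architecture is the paper's. You show that each $\gamma_j$ is a $(1,ae^{-bs})$-quasi-geodesic of $(\bn,K_{\bn})$ by integrating Royden's localization along $\bn$-geodesics, apply Theorem~\ref{P:exponetial}, and return to $B'$ with one more localization. For that last step the paper quotes from Venturini a ratio bound $K_{\bn\cap B(\xi,\eps)}\le\frac32K_{\bn}$ near $\xi$.

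The weak point is the bridge inequality you display. As a uniform statement on $V\cap\bn$ with error $C(\delta(z)+\delta(w))^{c}$, it is false in general, for instance when $U_\xi$ is a small ball about $\xi$. Let $z,w$ tend radially to two distinct points $p\neq q$ of $\partial\bn\cap V$. Your error term tends to $0$, but $K_{B'}(z,w)-K_{\bn}(z,w)$ stays bounded below by a positive constant depending on $p,q$. The reason is that the $\bn$-geodesic from $z$ to $w$ passes through points whose distance to $\partial\bn$ is bounded below in terms of $|p-q|$, and that stretch is exactly your uncontrolled ``top term''. This is already visible in one variable: for $\D\cap U\subsetneq\D$ the limit is the boundary Schwarz--Pick defect of a Riemann map of $\D\cap U$, which is positive because that map is not M\"obius. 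So Steps~2 and~4 cannot simply quote that inequality.

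What is true, and what the paper actually proves, is a pair-specific version. Let $\sigma$ be the unit-speed $\bn$-geodesic from $\gamma_j(s)$ to $\gamma_j(t)$, and suppose $K_{\bn}(\sigma(\tau),\bn\setminus U_\xi)\ge\tau+s-C$. Then Royden's factor, with $\coth x\le1+4e^{-2x}$ for $x\ge1$, integrates to $K_{B'}(\gamma_j(s),\gamma_j(t))\le K_{\bn}(\gamma_j(s),\gamma_j(t))+O(e^{-2s})$. Proving that lower bound on $K_{\bn}(\sigma(\tau),\bn\setminus U_\xi)$ is the real content of Step~2, and it needs three ingredients:
\begin{enumerate}
\item an a priori $(1,C)$-quasi-geodesic bound for $\gamma_j$ in $\bn$;
\item the Morse lemma, to keep $\sigma$ near $\gamma_j$;
\item the fact that the $\bn$-ray to $\xi$ moves away from $\bn\setminus U_\xi$ at unit speed.
\end{enumerate}
The paper gets the a priori bound from the additive localization $K_{\bn\cap B(\xi,\eps)}-K_{\bn}<\frac12$ near $\xi$. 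Your Gromov-product idea also works, because $K_{\bn}(o,\gamma_j(t))\ge t-C$ together with $K_{\bn}\le K_{B'}$ already makes $\gamma_j$ a $(1,C)$-quasi-geodesic by the triangle inequality.

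That lower bound on $K_{\bn}(o,\gamma_j(t))$ needs $\delta(\gamma_j(t))\lesssim e^{-2t}$, and in Step~1 you cite the wrong estimate for it. The bound $K_{B'}(o,z)\ge-\frac12\log\delta(z)-C$ is free from $B'\subset\bn$, but it only gives $\delta(\gamma_j(t))\gtrsim e^{-2t}$. What you need is the upper estimate $K_{B'}(o,z)\le-\frac12\log\delta(z)+C$ near $\xi$. It follows by comparing with a small ball tangent to $\partial\bn$ from inside $B'$.

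In Step~4 the restriction costs nothing. The connecting $\bn$-geodesic has length at most $\alpha e^{-\beta t}$ and stays at $K_{\bn}$-distance at least $t-C$ from $\bn\setminus U_\xi$. Integrating Royden along it gives $K_{B'}\le(1+Ce^{-2t})K_{\bn}$ there, which does the job of the paper's $\frac32$ ratio bound.
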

The study of the convergence of $(1,ae^{-bs})$ quasi-geodesics locally near the boundary points of certain domains also allows us to deduce the exponential convergence property for geodesics in the ambient domain. More precisely, we prove the following local-to-global principle.

\begin{theorem}\label{T:local to global1}
 Let $\Omega \subset \mathbb{C}^{d}$ be a bounded, complete hyperbolic domain satisfying the geodesic visibility property. Assume that, for every $\xi \in \partial \Omega$, there exists an open neighborhood $U_{\xi}$ of $\xi$ such that $\Omega' := \Omega \cap U_{\xi}$ is connected and has the following property: for any two maps $\gamma_{1},\gamma_{2}:[0,\infty)\to\Omega'$ and  $a,b>0$, \[ \gamma_j\big|_{[s,t]} \text{ is a }(1,ae^{-bs})\text{-quasi-geodesic for every }0\le s<t, \quad j=1,2, \] implies that there exist $T_{0}\in\mathbb{R}$ and $A(\gamma_{1},\gamma_{2}, a),B(b)>0$ such that \[ K_{\Omega'} \bigl(\gamma_{1}(t),\gamma_{2}(t+T_{0})\bigr) \leq A e^{-Bt} \] for all sufficiently large $t$. Then, for any two geodesic rays \[ \sigma_{1},\sigma_{2}:[0,\infty)\to\Omega \] satisfying $\sigma_{1}(\infty)=\sigma_{2}(\infty),$ there exist $T\in\mathbb{R}$ and $B'>0$ such that \[ \limsup_{t\to\infty} \frac{\log K_{\Omega} \bigl(\sigma_{1}(t),\sigma_{2}(t+T)\bigr)}{t} \leq -B'. \]
 
\end{theorem}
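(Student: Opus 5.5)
The plan is to localize near the common endpoint $\xi:=\sigma_1(\infty)=\sigma_2(\infty)\in\partial\Omega$. We use the hypothesis on $\Omega'=\Omega\cap U_\xi$ for the localized rays, and then transfer the resulting estimate back to $K_\Omega$ using the trivial inequality $K_\Omega\le K_{\Omega'}$ on $\Omega'$.

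\textbf{Step 1: the tails lie in $\Omega'$.} Since $\sigma_j(t)\to\xi$, there is $t_0$ such that $\sigma_j([t_0,\infty))\subset \Omega\cap V$ for $j=1,2$. Here $V\Subset U_\xi$ is a smaller neighbourhood of $\xi$, chosen so that $\Omega\cap V$ stays well inside $\Omega'$ away from $\partial U_\xi\cap\Omega$.

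\textbf{Step 2: localization of the Kobayashi distance.} This is the main obstacle. We need a localization estimate of the form
\[
K_{\Omega'}(z,w)\le K_\Omega(z,w)+C e^{-c\,\min\{K_\Omega(o,z),K_\Omega(o,w)\}}
\]
for $z,w$ in $\Omega\cap V$ close to $\xi$, with $o\in\Omega$ a fixed base point. It would give that $\gamma_j(t):=\sigma_j(t_0+t)$ is a $(1,ae^{-bs})$-quasi-geodesic for $K_{\Omega'}$ on each $[s,t]$.

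The lower bound $K_{\Omega'}\ge K_\Omega$ is automatic. For the upper bound, I would argue as follows. By geodesic visibility, a $K_\Omega$-geodesic between two points near $\xi$ stays near $\xi$, hence inside $\Omega'$. Then I would use the standard localization of the infinitesimal Kobayashi metric,
\[
k_{\Omega'}(z;v)\le \bigl(1+Ce^{-cK_\Omega(o,z)}\bigr)\,k_\Omega(z;v),
\]
which follows by composing an extremal disc for $\Omega$ with a shrinking of the disc. The shrinking is controlled because, by the visibility property, the extremal discs through points near $\xi$ stay in $U_\xi$ away from a small hyperbolic neighbourhood of the boundary of the disc. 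Along a geodesic we have $K_\Omega(o,\sigma(u))\ge u-C'$, so integrating gives the extra additive error bounded by $\int_s^\infty Ce^{-cu}\,du\lesssim e^{-cs}$. This yields the required $(1,ae^{-bs})$ property with $b=c$. If the exponential factor cannot be obtained in this generality, I would first try to extract it from completeness together with the Gromov-product control provided by visibility.

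\textbf{Step 3: conclusion.} By the hypothesis on $\Omega'$ there are $T_0$ and $A,B>0$ with
\[
K_{\Omega'}\bigl(\gamma_1(t),\gamma_2(t+T_0)\bigr)\le Ae^{-Bt}
\]
for all large $t$. Since $K_\Omega\le K_{\Omega'}$, we get
\[
K_\Omega\bigl(\sigma_1(t+t_0),\sigma_2(t+t_0+T_0)\bigr)\le Ae^{-Bt}.
\]
Setting $T=T_0$, taking logarithms, dividing by $t$ and letting $t\to\infty$ gives
\[
\limsup_{t\to\infty}\frac{\log K_\Omega\bigl(\sigma_1(t),\sigma_2(t+T)\bigr)}{t}\le -B,
\]
so the statement holds with $B'=B$.
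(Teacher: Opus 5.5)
Your outline matches the paper's proof. You show that the tails of $\sigma_1,\sigma_2$ are $(1,ae^{-bs})$-quasi-geodesics for $K_{\Omega'}$ by integrating Royden's localization along the rays, apply the hypothesis on $\Omega'$, and return to $\Omega$ via $K_\Omega\le K_{\Omega'}$. Steps~1 and~3 are fine.

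Step~2 carries the whole content, and it is not established; part of it is also misstated. The two-point estimate you set as the target is false in that generality. Along the $K_\Omega$-geodesic from $z$ to $w$, the function $K_\Omega(o,\cdot)$ is only bounded below by $\langle z\mid w\rangle_{o}^{\Omega}$, and in the disc it actually dips to about that value. It is not controlled by $\min\{K_\Omega(o,z),K_\Omega(o,w)\}$. Already for $\Omega=\D$, take the half-plane model with $U_\xi$ corresponding to $\{|w|<R\}$, so that $\Omega'$ is a hyperbolic half-plane and everything is explicit. The points $\mp\epsilon+i\epsilon^{k}$ have $K_{\Omega'}-K_\Omega$ of order $\epsilon^{2}/R^{2}$, while $\min\{K(o,z),K(o,w)\}$ is of order $k\log(1/\epsilon)$, so no fixed $c$ works.

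You never need this estimate. For $z=\sigma_j(s)$ and $w=\sigma_j(t)$, the connecting geodesic is $\sigma_j|_{[s,t]}$ itself, which lies in $\Omega'$ once $s\ge t_0$. So your appeal to visibility to keep geodesics between nearby points near $\xi$ is superfluous. It is also not what the visibility hypothesis, which concerns pairs of distinct boundary points, gives directly. Integrating along the ray using $K_\Omega(o,\sigma_j(u))\ge u-C'$, as you do at the end, is the right move.

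The actual gap is the infinitesimal estimate with an exponential rate. Extremal discs play no role here. For any $f\in\mco(\D,\Omega)$ with $f(0)=z$, the distance-decreasing property maps the Poincar\'e disc of radius $K_\Omega(z,\Omega\setminus U_\xi)$ about $0$ into $U_\xi$; this is Royden's lemma (Result~\ref{R:Roydenestimate}). What must be proved is $K_\Omega(z,\Omega\setminus U_\xi)\ge K_\Omega(o,z)-\lambda$ for $z$ near $\xi$. Writing ``by the visibility property'' is not a proof, and your fallback sentence suggests the mechanism was not located.

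The paper's Claim supplies this bound:
\begin{enumerate}
\item Cover the compact set $\partial\Omega\cap\partial U_\xi$, which misses $\xi$, by finitely many visibility neighbourhoods $U_{\eta_1},\dots,U_{\eta_m}$. These come with compacts $K_{\eta_\ell}$ and neighbourhoods $U_\xi^{\eta_\ell}$ of $\xi$.
\item Set $K=\bigcup_\ell K_{\eta_\ell}$ and $V_\xi=\bigcap_\ell U_\xi^{\eta_\ell}$, shrunk to lie in $U_\xi$. Then $L=(\Omega\cap\partial U_\xi)\setminus\bigcup_\ell U_{\eta_\ell}$ is relatively compact in $\Omega$.
\item Let $z\in V_\xi\cap\Omega$ and $x\in\Omega\setminus U_\xi$. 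A geodesic from $z$ to $x$ first leaves $U_\xi$ at some $x'\in\Omega\cap\partial U_\xi$.
\item Either $x'\in L$, or the subsegment from $z$ to $x'$ meets $K$. In both cases the triangle inequality gives $K_\Omega(z,x)\ge K_\Omega(o,z)-\lambda$ with $\lambda=\sup_{L\cup K}K_\Omega(o,\cdot)$.
\end{enumerate}

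Now take $o=\sigma_j(0)$. Royden's lemma and $\coth x\le 1+4e^{-2x}$ give $\kappa_{\Omega'}(\sigma_j(u);\sigma_j'(u))\le 1+4e^{2(\lambda-u)}$ for large $u$. Integrating yields $|t-s|\le K_{\Omega'}(\sigma_j(s),\sigma_j(t))\le |t-s|+2e^{2\lambda}e^{-2s}$, and your Step~3 then goes through verbatim.
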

In Example~\ref{Examp:E1}, we have illustrated how the family of quasi-geodesics considered in Theorem~\ref{P:exponetial} arises naturally. We now give an another application of Theorem~\ref{P:exponetial} to detect  the distance between a geodesic and its image under  holomorphic map. We are able to prove that 
\begin{theorem}\label{T:c(f)}
Let $f \in \mco(\bn , \bn)$ be a non-elliptic map (i.e., no fixed point in $\bn$) and $\xi \in \partial \bn$ be such  that the sequence of iterates of $f$ converges to $\xi$.   Let  $\gamma:[0,\infty) \to \bn$ be a geodesic with $\gamma(\infty)=\xi$.   Assume that there exists $\lambda \in (0, \infty)$ such that
\begin{itemize}
    \item[(i.)]
\[
\ln \lambda
=
\liminf_{z\to \xi}
\big[
K_{\bn}(\gamma(0),z)-K_{\bn}(\gamma(0),f(z))
\big],
\]
and $f(x_n)\to \xi$ as $n\to\infty$ whenever $x_n\to\xi$ is a sequence contained in the region
\[
A(\xi,R)
:=
\big\{
x\in\bn :
K_{\bn}(x,\eta)<R
\big\},
\]
where $\eta:[0,\infty)\to\bn$ is any geodesic ray with
$\eta(\infty)=\xi$, $R>0$ is arbitrary, and
\[
K_{\bn}(x,\eta)
:=
\inf_{s\geq 0}K_{\bn}(x,\eta(s)).
\]
    \item [(ii.)]
    There exists   $A,b > 0$  such that 
\begin{align*}\label{E:eqassum1}
 \big|K_{\mathbb{B}^d}(\gamma(0), f(\gamma(t))) - t - \ln{\lambda}\big| \le Ae^{-bt}\quad \forall t\geq 0.
\end{align*}
\end{itemize}
Then for  any geodesic ray $\sigma:[0, \infty) \to \bn$ with $\sigma(\infty)=\xi$ the following holds: 
\begin{align*}
\lim_{t\to \infty}\kb(f(\sigma(t)),\sigma(t-\ln{\lambda)})=0,     \end{align*}
\end{theorem}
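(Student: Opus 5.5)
The plan is to turn the image $f\circ\gamma$ into a quasi-geodesic of the kind treated in Theorem~\ref{P:exponetial} and compare it with $\gamma$. After that, Result~\ref{R:expcball} transfers the comparison to an arbitrary ray $\sigma$. Hypothesis (i) is then used to reconcile the shift produced by (ii) with the shift $-\ln\lambda$ in the statement.

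\textbf{Step 1 ($f\circ\gamma$ is a good quasi-geodesic landing at $\xi$).} Put $\beta(t):=f(\gamma(t))$. Since $f$ is $\kb$-nonexpanding, $\kb(\beta(s),\beta(t))\le |t-s|$. For $s\le t$, the triangle inequality together with (ii) gives
\[
\kb(\beta(s),\beta(t))\ge \kb(\gamma(0),\beta(t))-\kb(\gamma(0),\beta(s))\ge (t-s)-2Ae^{-bs}.
\]
Hence $\beta|_{[s,\infty)}$ is a $(1,2Ae^{-bs})$ quasi-geodesic for every $s\ge0$. Moreover $\gamma(t)\in A(\xi,R)$ for any $R>0$ (take $\eta=\gamma$), and $\gamma(t)\to\xi$. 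The second clause of (i) therefore gives $\beta(t)\to\xi$.

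\textbf{Step 2 (identifying the shift along $\gamma$).} Apply Theorem~\ref{P:exponetial} to the pair $\beta,\gamma$. This gives $T_0$ with $\kb(\beta(t),\gamma(t+T_0))\to0$. Comparing $\kb(\gamma(0),\gamma(t+T_0))=t+T_0$ with (ii), $\kb(\gamma(0),\beta(t))=t+\ln\lambda+o(1)$, forces $T_0=\ln\lambda$. So (ii) alone yields $\kb(f(\gamma(t)),\gamma(t+\ln\lambda))\to0$, a \emph{forward} shift.

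\textbf{Step 3 (the sign, which I expect to be the main obstacle).} To obtain the stated shift $-\ln\lambda$ I will show that the hypotheses force $\ln\lambda=0$, so that $\gamma(t+\ln\lambda)=\gamma(t-\ln\lambda)$. The tool is the first part of (i) combined with the Julia--Wolff--Carath\'eodory theorem in $\bn$ (Rudin; Abate's metric version). Because $f$ is non-elliptic and $f(z)\to\xi$ along $K$-regions $A(\xi,R)$, the quantity $\kb(\gamma(0),z)-\kb(\gamma(0),f(z))$ has a limit along the geodesic $\gamma$, and this limit equals the liminf in (i), namely $\ln\lambda$. On the other hand, (ii) gives this limit along $z=\gamma(t)$ as $t-(t+\ln\lambda)=-\ln\lambda$. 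Hence $\ln\lambda=-\ln\lambda$, so $\lambda=1$. If citing the radial-limit part of Julia--Wolff--Carath\'eodory turns out to need extra care, a weaker fallback is available without it. Taking $z=\gamma(t)$ in the liminf shows $\ln\lambda\le-\ln\lambda$, that is, $\lambda\le1$. I would then try to rule out $\lambda<1$ using the second clause of (i) together with Julia's lemma (horospheres at $\xi$ are mapped into horospheres).

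\textbf{Step 4 (arbitrary $\sigma$).} Let $\sigma$ be any geodesic ray with $\sigma(\infty)=\xi$. By Result~\ref{R:expcball} there is $T_1$ with $\kb(\sigma(t),\gamma(t+T_1))\to0$. Since $f$ is $1$-Lipschitz, $\kb(f(\sigma(t)),f(\gamma(t+T_1)))\to0$. By Step 2, $\kb(f(\gamma(t+T_1)),\gamma(t+T_1+\ln\lambda))\to0$. Applying Result~\ref{R:expcball} again (the geodesic $\gamma$ shifted by $T_1$, compared with $\sigma$) gives $\kb(\gamma(t+T_1+\ln\lambda),\sigma(t+\ln\lambda))\to0$. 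Chaining the three estimates gives $\kb(f(\sigma(t)),\sigma(t+\ln\lambda))\to0$, and by Step 3 this is exactly $\lim_{t\to\infty}\kb(f(\sigma(t)),\sigma(t-\ln\lambda))=0$.
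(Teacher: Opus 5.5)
Your proof is correct, but it obtains the shift from a different source than the paper does. The paper shows that $f\circ\sigma$ is eventually a $(1,Ce^{-b's})$ quasi-geodesic for every ray $\sigma$ landing at $\xi$. It applies Theorem~\ref{P:exponetial} to $(f\circ\sigma,\sigma)$ only to get $\lim_{t\to\infty}\inf_{s\ge0}\kb(f(\sigma(t)),\sigma(s))=0$, and then takes the specific shift $-\ln\lambda$ from \cite[Proposition~4.10]{AF2024}; that citation is the only place the dilation-coefficient half of (i) is used. You instead apply Theorem~\ref{P:exponetial} once, to $(f\circ\gamma,\gamma)$, and read the shift directly off (ii), where it is $+\ln\lambda$. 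You transfer to arbitrary $\sigma$ using the $1$-Lipschitz property of $f$ and Result~\ref{R:expcball}, and use (i) only to force $\lambda=1$. That step is sound. After an automorphism sending $\gamma(0)$ to $0$, $\gamma$ becomes a radius, and the Julia--Wolff--Carath\'eodory theorem in $\bn$ says the radial limit equals the liminf; this needs only finiteness of the liminf, not non-ellipticity. Your fallback also closes. Taking $z=\gamma(t)$ in the liminf gives $\lambda\le1$. For the other direction, use Julia's lemma $h^{\gamma(0)}_{\xi}(f(z))\le h^{\gamma(0)}_{\xi}(z)+\ln\lambda$, where the Julia point is $\xi$ by the second half of (i). Evaluate it at $z=\gamma(t)$ and combine with $h^{\gamma(0)}_{\xi}(f(\gamma(t)))\ge-\kb(\gamma(0),f(\gamma(t)))=-t-\ln\lambda+o(1)$; this gives $\lambda\ge1$. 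What your route buys is that it makes visible what the citation hides: (i) and (ii) as written are compatible only when $\lambda=1$, so the theorem has content only in that case. For instance, in Example~\ref{Examp:E1}, (ii) holds with $\lambda>1$, but $K_{\mathcal{S}}(\gamma(0),\gamma(t))-K_{\mathcal{S}}(\gamma(0),F(\gamma(t)))\to-\ln\lambda<\ln\lambda$, so (i) fails there. Moreover, $F(\gamma(t))$ is asymptotic to $\gamma(t+\ln\lambda)$, not $\gamma(t-\ln\lambda)$. Had (ii) been stated with $t-\ln\lambda$, your Steps 1, 2 and 4 alone would give the intended conclusion, with no need for Step 3.
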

\noindent
It turns out that, under the assumptions of Theorem~\ref{T:c(f)}, for every 
geodesic ray $\gamma$ in $\bn$ satisfying $\gamma(\infty)=\xi$, there exists a 
constant $m_{\gamma}>0$ such that the curve $f\circ\gamma|_{[t,\infty)}$ is a $(1,e^{-t})$-quasi-geodesic for every $t\geq m_{\gamma}$. We then apply 
Theorem~\ref{P:exponetial} to this family of quasi-geodesics and obtain a 
required estimates. Finally, applying \cite[Proposition~4.10]{AF2024} we get the desired conclusion of Theorem~\ref{T:c(f)}. To further illustrate the applicability of the theorem, we present an explicit example satisfying the hypotheses of 
Theorem~\ref{T:c(f)}(see Example~\ref{Examp:E1} ). This example also demonstrates how the family of 
 quasi-geodesics considered in Theorem~\ref{P:exponetial} 
arises naturally in this context. 

\smallskip

\noindent
We conclude the introduction with a brief description of the organization of the paper. In Section~\ref{S:prelims}, we collect the relevant notions and technical results that will be used throughout the paper. The proofs of Theorem~\ref{T:expcon} and Theorem~\ref{P:exponetial} are presented in  Section~\ref{S:expoproof}, and Sections~\ref{S:CATptoof} respectively. Section~\ref{S:appli} is devoted to the proofs of Theorem~\ref{T:Application} and Theorem~\ref{T:c(f)}. In Section~\ref{S:expo local to global}, we have discussed the exponential convergence property from certain  local-to-global and global-to-local perspectives. Finally, Section~\ref{S:appendix} contains the proof of an auxiliary result used in the proof of Theorem~\ref{P:exponetial}.

\section{Preliminaries and Technical results}\label{S:prelims}
In this section, we discuss the notations, definitions, and preliminary results that will be used throughout the paper. In the first subsection, we recall several basic notions needed in the sequel. In the second subsection, we present a number of known results that play a crucial role in the proof of our main theorem and also include proofs of  few auxiliary lemmas that will be needed later. The following notation will be used throughout the paper.

\medskip

\noindent
\textbf{Notations:}  For a domain $D \st \cn$ and $v \in \cn\setminus\{0\}$ 
\begin{align*}
   \partial_{D}(z)&:=\inf\{\|z-w\|: w \in \partial D\}. \\
   \partial_{D}(z;v)&:=
\inf\{\|z-w\| : w \in \partial D \cap (z+\mathbb{C}\cdot v)\}.
\end{align*}
For a metric space $(X,d)$ and $x,y \in X$, the closed geodesic segment joining $x, y$ will be denoted by $[x,y]$.  If 
$A\st X$ and $p\in X$  then we denote $d(p;A):=\inf\{d(p,a):a\in A\}$.
Let $\D $ denote the unit disc in $\cplx$ and $\rho_{\D}$ is the Poincar\'e distance.

\subsection{Preliminaries}
Recall that, for a domain $D \st \cn$, a point $z\in D$, and a vector $X\in\mathbb{C}^d$, the Kobayashi--Royden (pseudo)metric $\kappa_D$ is defined by
\[
\kappa_D(z; X) = \inf \{ |\alpha| : \exists \varphi \in \mathcal{O}(\D, D), \, \varphi(0) = z, \, \alpha \varphi'(0) = X \}.
\]
It follows from the \cite[Proposition~3]{Royden} that for any  domain $D \st \cn$ the map $\kappa_{D}: D \times \cn \to [0,\infty)$ is an upper semi-continuous.

\medskip

\noindent
The Kobayashi-Royden (pseudo) metric defines Kobayashi(pseudo) distance, which is defined as follow:

\medskip

\noindent
For $x, y \in D$, the Kobayashi (pseudo)distance  is defined as follows :
\begin{equation}
    K_D(x, y) = \inf_\gamma \int_0^1 \kappa_D(\gamma(t); \gamma'(t)) \, dt,
\end{equation}
where the infimum is taken over all piecewise $\smoo^1$ curves $\gamma : [0, 1] \to D$ with $\gamma(0) = x$ and $\gamma(1) = y$.
\begin{definition}\label{Def: Kobayashi hyperbolic}
A domain $D \st \cn$ is called Kobayashi hyperbolic domain if $(D, K_{D})$ forms a metric space.    
\end{definition}

Let us recall some properties of the Kobayashi–Royden metric on certain regular domains in $\cn$ that will be needed later. For more details on the Kobayashi distance, we refer the reader to \cite{Kobaya}.
\begin{itemize}
    \item[(A)] 
    For the unit ball in $\bn \st \cn$ the Kobayashi-Royden metric $\kappa_{\bn}(z,X)$ is a Riemann metric with  the real sectional curvature bounded above by $-1$.
    
    \medskip
    
    \item[(B)]
 For a bounded, strongly convex domain $D \st \cn$ with $\smoo^{\infty}$ smooth boundary the Kobayashi-Royden metric $\kappa_{D}$ is a $\smoo^{\infty}$  Finsler metric (see  \cite{Lempert81}).

\end{itemize}

\medskip

\noindent
Let $(D, K_{D})$ be a Kobayashi hyperbolic domain.  A \textit{geodesic} for $K_D$ is a curve $\sigma : I \to D$, where $I$ is an interval in $\mathbb{R}$, such that for any $s, t \in I$, 
\[
K_D(\sigma(s), \sigma(t)) = |s - t|.
\]
\noindent

Here we mention some known terminology:
\begin{itemize}
    \item If $x, y \in D$, $I = [0, L]$ and $\sigma(0) = x$, $\sigma(L) = y$, then $L = K_D(x, y)$ and we say that $\sigma$ is a geodesic joining $x$ and $y$.

\smallskip

 \item If $I = (-\infty, +\infty)$, we say that $\sigma$ is a \textit{geodesic line}.

 \smallskip
 
\item If $I = [0, \infty)$, we say that $\sigma$ is a \textit{geodesic ray}.

\smallskip

\item A geodesic ray in $D$ is said to be land at $\xi \in \partial D$ and denote it by $\gamma(\infty)=\xi$, if $\lim_{t\to \infty}\gamma(t)=\xi.$
\smallskip
    \item 
    A complex geodesic in $(D, K_{D})$ is a holomorphic map $\phi:\D \to D$ such that 
    \begin{align*}
        \rho_{\D}(s,t)&=K_{D}(\phi(s), \phi(t)).
    \end{align*}
\end{itemize}
\medskip
For $\lambda \geq 1$ and $\kappa \geq 0$,  a  $(\lambda,\kappa)$ {\em quasi-geodesic} in $D$ is a map $\sigma : I \to D$ such that 
$$ 
\frac{1}{\lambda}|t-s|-\kappa
\leq
K_{D}(\sigma(s),\sigma(t))
\leq
\lambda |t-s|+\kappa  \quad \forall s, t \in I.$$

\noindent
We now mention a result concerning the special properties of real and complex Kobayashi geodesics in strongly convex domains. These properties play a crucial role in the proof of Theorem~\ref{T:expcon}. For detailed accounts and proofs, we refer the reader to \cite[Lemma ~3.3, Lemma~2.6]{GauSesh2013}, \cite{Lemp1}, \cite{Lemp2}, \cite[Proposition~4.
9]{MR4978415}.

\begin{result}\label{R:Pgeodesic}
 Let $\Om \st \cn$ be a bounded strongly convex with $\smoo^{3}$ boundary. Then 
 \begin{enumerate}
     \item [(i.)]
     For any interval $I \st \re$ and a (real) geodesic $\alpha:I\to \Om$, there exists a complex geodesic $\phi:\D \to \Om$ such that $\alpha(I) \subset \phi(\D)$.

     \smallskip
     \item[(ii.)]
Every complex geodesic $\phi:\D \to\Omega$ extends as a $\mathcal{C}^{1,\alpha}$-smooth map from the closed unit disc $\overline{\D}$ for any $\alpha \in (0,1)$.

\smallskip

\item[(iii.)]
Suppose that $\phi_{1}, \phi_{2}:\D \to\Omega$ be two complex geodesics such that $\phi_{1}(1)=\phi_{2}(1)$ with $\phi_{1}'(1)=\lambda\phi_{2}'(1)$  for some $\lambda>0$ then $\phi_{1}(\D)=\phi_{2}(\D)$. 

\smallskip

\item [(iv.)]
If $\phi: \D \to \Om$ is a complex geodesic with $\phi(1)=\xi$ then $\langle\phi'(1),\nu_{\xi}\rangle>0$  where $\nu_{\xi}$ denotes the outer drawn normal vector on $\Om$ at the the point $\xi.$

 \end{enumerate}
\end{result}

\noindent

\noindent
If $D \st \cn$ is a Kobayashi hyperbolic domain and $z,w,o \in D$ then the {\em Gromov Product} of $z,w$ with the base point $o$ is denoted by $\langle z\mid w\rangle_{o}^{D}$ and defined as follows 
\[
\langle z \mid w\rangle_{o}^{D}
:= \frac{1}{2}\bigl(K_{D}(z,o)+K_{D}(w,o)-K_{D}(z,w)\bigr),
\qquad z,w\in D.
\]

\medskip

\noindent
Next, we discuss the notion of a ${\rm CAT}(-1)$ space, which provides a synthetic formulation of  negative curvature for general metric spaces.  Before recalling the definition  of ${\rm CAT}(-1)$ space let us we mention the notion of {\em comparison triangle} and {\em comparison point}.
Given a geodesic triangle
\[
\triangle=[x,y]\cup[y,z]\cup[z,x]
\]
in $(X,d)$, a \emph{comparison triangle} for $\triangle $ in $(\D, \rho_{\D})$ is defined by geodesic triangle $\overline{\triangle}$
\[
\overline{\Delta}=[\bar{x},\bar{y}]\cup[\bar{y},\bar{z}]
\cup[\bar{z},\bar{x}],
\]
is  in $(\D, \rho_{\D})$ whose side lengths agree with those
of $\triangle$, namely,
\[
d(x,y)=\rho_{\D}(\bar{x},\bar{y}), \qquad
d(y,z)=\rho_{\D}(\bar{y},\bar{z}), \qquad
d(z,x)=\rho_{\D}(\bar{z},\bar{x}).
\]

\medskip

\noindent
A
\emph{comparison point} of  a point $p\in [x,y]$ is  a point $\bar p\in [\bar x,\bar y]$ such that
\[
d(x,p)=\rho_{\D}(\bar x,\bar p).
\]
Comparison points on the other sides are defined analogously.

\medskip

\begin{definition}(\textbf{CAT($\mathbf{-1}$) spaces})\label{Def:cat}
A metric space $(X,d)$ is called a
CAT$(-1)$ space if it satisfies the following conditions.
\begin{itemize}
    \item 
    $X$ is a  geodesic metric space (i.e. for all $x,y \in X$ an isometric embedding of an interval of length $d(x,y)$ in $\mathbb{R}$ into $X$). 
    \smallskip
    \item 
  For every geodesic triangle $\triangle (p,q,r)$ there exists a comparison triangle $\bar{\triangle}(\bar{p},\bar{q},\bar{r})$  in the space $(\D, \rho_{\D})$  such that if  $\bar{x}, \bar{y} \in \bar{\triangle}(\bar{p},\bar{q},\bar{r})$ are   comparison points of  $x,y \in \triangle (p,q,r)$ respectively, then
  \begin{align*}
    d(x,y)\leq \rho_{\D}(\bar{x}, \bar{y}).  \end{align*}
\end{itemize} 
\end{definition}
\noindent
It follows directly from the definition that $(\D, \rho_{\D})$ is a prototypical example of a ${\rm CAT}(-1)$ space. From a remarkable result due to Alexandrov and property $(A)$, it follows
\begin{result}\cite{Bridsonbook}\label{R:alexnder}
A complete, simply
connected Riemannian manifold is a {\rm CAT}$(-1)$ space if and only if its
sectional curvature is bounded above by $-1$. In particular, $(\bn, K_{\bn})$ is a ${\rm CAT}(-1)$ space.
\end{result}

\begin{proposition}\label{Rem:CAT}
Let $\Om \St \cn$ be a  strongly convex domain with $\smoo^{\infty}$ boundary. Then $(\Om, K_{\Om})$ forms a ${\rm CAT}(-1)$ space if and only if $\Om \cong \bn$.
\end{proposition}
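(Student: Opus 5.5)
The easy direction comes first. If $\Om \cong \bn$ via a biholomorphism $F$, then $F$ is an isometry $(\Om,K_\Om)\to(\bn,K_{\bn})$, because the Kobayashi distance is a biholomorphic invariant. Result~\ref{R:alexnder} says $(\bn,K_{\bn})$ is ${\rm CAT}(-1)$, and the ${\rm CAT}(-1)$ condition is purely metric, so $(\Om,K_\Om)$ is ${\rm CAT}(-1)$ as well.

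For the converse, assume $(\Om,K_\Om)$ is ${\rm CAT}(-1)$. The plan is to show that $\kappa_\Om$ is Riemannian (comes from a Hermitian inner product) at every point and then apply a rigidity theorem.

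Step 1 (uniqueness of geodesics). A ${\rm CAT}(-1)$ space is uniquely geodesic, so any two points of $\Om$ are joined by a unique real $K_\Om$-geodesic.

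Step 2 (the norm at each point is Euclidean). By property (B), $\kappa_\Om$ is a $\smoo^\infty$ Finsler metric, so $(\Om,K_\Om)$ is a smooth Finsler length space. Fix $z\in\Om$ and rescale: the pointed spaces $(\Om,\lambda K_\Om,z)$ converge, as $\lambda\to\infty$, to the normed space $(T_z\Om\cong\re^{2d},\kappa_\Om(z;\cdot))$. The ${\rm CAT}(-1)$ condition for $K_\Om$ implies ${\rm CAT}(-1/\lambda^2)$, hence ${\rm CAT}(0)$, for $\lambda K_\Om$. The ${\rm CAT}(0)$ condition is preserved under pointed Gromov--Hausdorff limits, so the tangent normed space is ${\rm CAT}(0)$. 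A normed vector space is ${\rm CAT}(0)$ only if the CAT(0) midpoint inequality for Euclidean triangles forces the parallelogram law, i.e. only if the norm comes from an inner product. Since $\kappa_\Om(z;\lambda X)=|\lambda|\kappa_\Om(z;X)$ for $\lambda\in\cplx$, this inner product is invariant under multiplication by $e^{i\theta}$, and therefore $\kappa_\Om(z;\cdot)$ is a Hermitian norm.

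Step 3 (smoothness of the Hermitian metric). It follows that $\kappa_\Om$ is a $\smoo^\infty$ Hermitian metric $g$ on $\Om$. Next, I check that $g$ is Kähler with negative sectional curvature. Complex geodesics are totally geodesic holomorphic discs of constant holomorphic curvature $-4$, because $\kappa_\Om$ restricted to them is the Poincaré metric, and by Result~\ref{R:Pgeodesic}(i) and Lempert's theory they pass through every point in every complex direction. To see the Kähler condition, I plan to compare with Lempert's results: $\kappa_\Om$ is the Kobayashi metric, which by Lempert equals the Carathéodory metric, and is moreover given by the pluricomplex Green function / Monge--Ampère exhaustion of $\Om$.

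Step 4 (rigidity). This is the step I expect to be the main obstacle. The cleanest route is a known theorem: a smoothly bounded strongly convex (more generally, strongly pseudoconvex) domain whose Kobayashi metric is Hermitian is biholomorphic to the ball. This is the Lempert / Bland--Duchamp--Kalka style characterization, and also Kim--Yu ("Kobayashi metric is Kähler" $\Rightarrow$ ball) or Stanton's theorem. If citing that theorem is not acceptable, I would give a direct argument. Take the Lempert spherical representation $\Phi_z:\Om\to\bn$, built from exponential maps along complex geodesics through $z$. Since $\kappa_\Om(z;\cdot)$ is Hermitian, after a linear change the indicatrix at $z$ is the unit ball. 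Using the circular-indicatrix/Monge--Ampère structure, the Lempert map is then holomorphic: the indicatrix being the ball, together with the characterization that the indicatrix is biholomorphic to $\Om$ in the strongly convex case (Lempert's theorem that the Kobayashi indicatrix determines $\Om$ when the indicatrix is ellipsoidal/Hermitian), gives $\Om\cong\bn$. Because of this step I expect to rely on citations rather than a self-contained proof.
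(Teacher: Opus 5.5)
You follow the paper up to the conclusion that $\kappa_\Om$ is Hermitian. The paper also shows that $(T_z\Om,\kappa_\Om(z;\cdot))$ is ${\rm CAT}(0)$; it cites the tangent-cone theorem where you use rescaled pointed Gromov--Hausdorff limits, and either works. It then deduces the parallelogram law and uses $\kappa_\Om(z;e^{i\theta}v)=\kappa_\Om(z;v)$ to get a Hermitian metric. Your easy direction is correct, and Step~1 is never used.

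The gap is Step~4, the only step with real complex-analytic content. Your primary route is a citation you have not pinned down. The Kähler route of Step~3 is only announced: ``compare with Lempert's results'' does not establish the Kähler condition. The self-contained fallback does not work as stated. It uses only that the indicatrix $I_z$ at the single point $z$ is a ball, and then appeals to a ``theorem that the indicatrix determines $\Om$ when it is ellipsoidal'', which is not an available result. Lempert's representation is smooth away from the centre and holomorphic along the discs $\zeta\mapsto\zeta v$, but in general it is not holomorphic; otherwise every strongly convex domain would be biholomorphic to a circular one. Nothing in your fallback uses that $\kappa_\Om$ is Hermitian away from $z$, and that is exactly what the proof needs.

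The missing idea, which is the paper's route, is to use the Hermitian condition at \emph{all} points to get smoothness of the inverse Lempert map at the centre, and then apply Forelli's theorem. Let $g_z(v,v)=\kappa_\Om(z;v)^2$ and $|w|_g:=\kappa_\Om(z;w)$. Then $g$ is a smooth Riemannian metric whose length distance is $K_\Om$. Let $\phi_v$ be the complex geodesic with $\phi_v(0)=z$, $\phi_v'(0)=v$ and $|v|_g=1$. The curve $t\mapsto\phi_v(\tanh t)$ is a unit-speed $g$-geodesic with initial velocity $v$, so $\exp_z(tv)=\phi_v(\tanh t)$. Hence the inverse Lempert map $\zeta v\mapsto\phi_v(\zeta)$ is
\[
\Psi(w)=\exp_z\Big(\tanh^{-1}(|w|_g)\,\frac{w}{|w|_g}\Big),\qquad w\in I_z=\{|w|_g<1\}.
\]
This map is $\smoo^\infty$ near $w=0$, because $\tanh^{-1}(r)/r$ is an even real-analytic function of $r$ and hence a smooth function of $|w|_g^2$.

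Choose $L\in GL(d,\cplx)$ mapping $\bn$ onto the Hermitian ball $I_z$. Then $\Psi\circ L$ is smooth near the origin and holomorphic on every complex line through $0$. Forelli's theorem, which requires exactly this smoothness at the origin, makes it holomorphic. Lempert's theory makes it bijective onto $\Om$, so it is a biholomorphism $\bn\to\Om$. The paper states this step tersely and refers to Greene--Kim--Krantz.

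If you prefer your Step~3 route, you must actually prove that $g$ is Kähler. You would then invoke the uniqueness of complete simply connected Kähler manifolds of constant holomorphic sectional curvature.
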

\begin{proof}
If $\Om \st \cn$ is a bounded strongly convex domain with $\smoo^{\infty}$ boundary then $\kappa_{D}$ is a $\smoo^{\infty}$- Finsler metric (see Property (B)). Suppose that $(\Om, K_{\Om})$ forms a  {\rm CAT(-1)} space. Then for any $z \in \Om$, $T_{z}\Om$ is the tangent cone  at $z$. It follows from \cite[Theorem~1.9]{Bridsonbook} that $T_{z}\Om$ with norm $\kappa_{\Om}(z,\cdot):T_{z}\Om \to \mathbb{R}$ forms a ${\rm CAT(0)}$ space.  Then from the ${\rm CAT(0)}$ inequality it turns out that  the norm $\kappa_{\Om}(z, \cdot)$ comes from a inner product. Consequently, $\kappa_{\Om}(z, \cdot)$ is Riemannian for all $z \in \Om$. Hence, for every $z\in\Omega$, there exists a real inner product $g_z$ on $T_z\Omega$ such that
$$
\kappa_{\Omega}(z;v)^2=g_z(v,v)
$$
for every $v\in T_z\Omega$. Since the Kobayashi--Royden metric $\kappa_{\Omega}$ is smooth on $T\Omega\setminus{0}$, the family of inner products ${g_z}(,)$ varies smoothly with $z$, and hence defines a smooth Riemannian metric $g$ on $\Omega$. Moreover, by the complex homogeneity of the Kobayashi--Royden metric,
$$
\kappa_{\Omega}(z;iv)=\kappa_{\Omega}(z;v).
$$
Consequently,
$
g_z(iv,iv)=g_z(v,v).
$
By polarization, we obtain
$
g_z(iv,iw)=g_z(v,w)
$
for every $v,w\in T_z\Omega$. Thus $g$ is $J$-invariant, and consequently the Kobayashi metric is a smooth Hermitian metric. 
Then the inverse of the Lempert representation map (see \cite{Lempert81}) defines a $\mathcal{C}^{\infty}$-diffeomorphism from the unit ball $\mathbb{B}^{d}$ onto the domain $\Omega$, which is holomorphic along every complex disc passing through the origin. Hence, by Forelli's theorem (see \cite{Patrizio1983}), it is a biholomorphism (see also  \cite[Chapter~7, page 193]{GreeneKimKrantz2011} for detail).

\end{proof}

\noindent
We now discuss about the notions which is required  for Theorem~\ref{T:Application}. 

\medskip

\noindent
{\bf Squeezing function:} One natural intrinsic measure of the complex geometry of a domain is the squeezing function. Let $D\subset \mathbb{C}^d$ be a bounded domain. The {\em squeezing function} of $D$ is denoted by
$s_D:D\to (0,1]$ and is defined by
\[
s_D(z)
=
\sup\Big\{r>0:\ \exists  \ \text{an injective map }f \in \mco(D, \bn)~~
 {\rm with }~f(z)=0,~~ r\mathbb{B}^d\subset f(D)\Big\}.
\]
\noindent
The notions of the squeezing function evolved from a closely related notion introduced in the works of Liu et al. \cite{LiuYauTung2004} \cite{LiuYauTung2005}. Its first formal definition was subsequently given by Deng, Guan, and Zhang \cite{DenGuZha2012}.

\medskip

\noindent
Recall that a domain $D \subset \mathbb{C}^d$ is called \emph{balanced} if, for every $z \in D$ and every $\lambda \in \mathbb{C}$ with $|\lambda| \leq 1$, it follows that $\lambda z \in D.$ The \emph{Minkowski functional} of $D$ is defined by $h_D(z):=\inf \left\{ t>0 : t^{-1}z \in D \right\}.$ Consequently, we get $D=\left\{ z\in\mathbb{C}^d : h_D(z)<1 \right\}$. When $D$ is a balanced convex domain, the Minkowski functional $h_D$ defines a norm on $\mathbb{C}^d$ (see \cite[Lemma 3.2]{Rudin2008}). For $r>0$, we set $D(r):=\left\{ z\in\mathbb{C}^n : h_D(z)<r \right\}$. Let $D$ be a balanced convex domain in $\mathbb{C}^{d}$, and let $\Omega\subset\mathbb{C}^d$ be a bounded domain. The \emph{ generalized squeezing function} of $\Omega$ with respect to $D$, denoted by $T_{\Omega}^{D}$, is defined by
\[
T_{\Om}^{D}(z)
:=
\sup \Bigl\{
r>0 :
D(r)\subset f(\Omega),\;
f\in \mathcal{O}(\Omega,D),\;
f(z)=0,\;
f \text{ is injective}
\Bigr\}.
\]
Observe that when $D=\mathbb{B}^d$, the function $T_{\Om}^{D}$ coincides with the classical squeezing function $s_{\Om}$. The reader is referred to \cite{GuptaPant2023,CM2024} for a detailed account of the properties and applications of the generalized squeezing function.

\medskip

\noindent

As noted earlier, $\mathbb{X}_d$ denotes the collection of all convex domains in $\mathbb{C}^d$ that do not contain any complex affine line, and $\mathbb{X}_{d,0}$ denotes the set of pairs $(\Omega,x)$ with $\Omega \in \mathbb{X}_d$ and $x \in \Omega$. It follows from a theorem of Barth (see \cite[Theorem~1]{Barth1980}) that every domain in $\mathbb{X}_d$ forms a complete metric space  with respect to the Kobayashi distance.

We now describe a natural topology on the sets $\mathbb{X}_d$ and $\mathbb{X}_{d,0}$. Given two compact sets $A,B \subset \mathbb{C}^d$, define the Hausdorff distance between them to be
\[
d_H(A,B)
=
\max \left\{
\max_{a\in A}\min_{b\in B}\|a-b\|,
\;
\max_{b\in B}\min_{a\in A}\|b-a\|
\right\}.
\]
\noindent
The Hausdorff distance is a complete metric on the set of compact subsets of $\mathbb{C}^d$. To consider general closed sets, we introduce the local Hausdorff semi-norms between two closed sets $A,B \subset \mathbb{C}^d$ by defining
\[
d_H^{(R)}(A,B)
=
d_H\!\left(
A\cap \overline{B_d(0;R)},
\,
B\cap \overline{B_d(0;R)}
\right)
\]
for $R>0$. Since an open convex set is determined by its closure, we can define a topology on $\mathbb{X}_d$ and $\mathbb{X}_{d,0}$ using these seminorms as follows:

\begin{itemize}

\item A sequence $C_n \in \mathbb{X}_d$ converges to $C \in \mathbb{X}_d$ if there exists some $R_0 \geq 0$ such that
\[
d_H^{(R)}(C_n,C)\to 0
\]
for all $R\geq R_0$.

\item A sequence $(C_n,x_n)\in \mathbb{X}_{d,0}$ converges to $(C,x)\in \mathbb{X}_{d,0}$ if $C_n$ converges to $C$ in $\mathbb{X}_d$ and $x_n$ converges to $x$ in $\mathbb{C}^d$.
\end{itemize}

The following lemma can be established by adapting the arguments used in \cite[Proposition~7.1]{Zimmer18Gap} and \cite[Lemma~3.3]{FathiHouci2023}. Since the proof is essentially identical to those references, we omit the details.

\begin{lemma}\label{L:gneralsqu}
   Let $(\Om_{n},x_{n}) \in \mathbb{X}_{d,0}$ such that $(\Omega_{n},x_{n}) \to (\Om_{\infty}, x_{\infty})$ in the local Hausdorff topology of $\mathbb{X}_{d,0}$ and $(D,0)\in \mathbb{X}_{d,0}$ be a fixed balanced bounded convex domain. Then $$\limsup_{n \to \infty} T^{D}_{\Om_{n}}(x_{n}) \leq  T^{D}_{\Om_{\infty}}(x_{\infty}).$$ 
\end{lemma}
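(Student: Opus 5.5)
We need to prove upper semicontinuity of $T^D$ under local Hausdorff convergence. The plan is a normal-families argument: pick near-extremal embeddings for each $\Om_n$, pass to a limit on $\Om_\infty$, and show that the limit is injective and its image still contains a slightly shrunken copy of $D(r)$.

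\emph{Setup.} Set $r_n := T^D_{\Om_n}(x_n)$. Passing to a subsequence, we may assume $r_n \to r := \limsup_n T^{D}_{\Om_n}(x_n)$. If $r=0$ there is nothing to prove, so assume $r>0$. For each $n$ choose an injective $f_n\in\mco(\Om_n,D)$ with
\[
f_n(x_n)=0, \qquad D(r_n-1/n)\subset f_n(\Om_n).
\]
Let $g_n := f_n^{-1}$ on $D(r_n-1/n)$.

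\emph{Convergence of $f_n$.} Local Hausdorff convergence of convex domains gives the following: every compact $K\subset\Om_\infty$ lies in $\Om_n$ for $n$ large, since $K$ has a convex neighborhood inside $\Om_\infty$ and convexity passes to the limit. Because $D$ is bounded, the $f_n$ are uniformly bounded on such $K$. By Montel and a diagonal argument, $f_n\to f$ locally uniformly on $\Om_\infty$, with $f\in\mco(\Om_\infty,\overline D)$ and $f(x_\infty)=0$. Since $D$ is bounded, balanced and convex, the maximum principle applied to the Minkowski functional $h_D$ (which is plurisubharmonic) shows $f(\Om_\infty)\subset D$.

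\emph{Convergence of $g_n$ and the main obstacle.} The crux is injectivity of $f$ together with the inclusion $D(r)\subset f(\Om_\infty)$. For this I would use the inverses. Fix $s<r$. The maps $g_n$ are defined on $D(s)$ for $n$ large, with $g_n(0)=x_n\to x_\infty$.
\begin{enumerate}
\item[(a)] \emph{Local uniform boundedness.} Since $\Om_\infty\in\mathbb{X}_d$ contains no complex line, after an affine change it lies in a product of half-planes. Local Hausdorff convergence lets us place all $\Om_n$, for large $n$, in slightly enlarged such products, which are hyperbolic. The $g_n$ then map into a fixed taut domain with $g_n(0)$ converging, so by tautness the family $\{g_n\}$ is normal and no subsequence diverges to infinity. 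Alternatively, Kobayashi distance estimates give local boundedness directly.
\item[(b)] \emph{Limit.} Pass to a limit $g:D(s)\to\overline{\Om_\infty}$ with $g(0)=x_\infty\in\Om_\infty$.
\item[(c)] \emph{Image lies in $\Om_\infty$.} Use the standard fact that a holomorphic map into the closure of a convex domain which meets the interior lies in the interior: near boundary points, compose with supporting real-linear functionals and apply the maximum principle.
\item[(d)] \emph{Inverse relation.} Compose locally uniform limits, which is legitimate since $g$ maps compacts into compacts of $\Om_\infty$. This yields $f\circ g=\mathrm{id}$ on $D(s)$.
\end{enumerate}
From (d), $f$ has full rank near $x_\infty$, so $\det f'\not\equiv0$. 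Hurwitz's theorem for injective maps then gives that $f$ is injective, as a limit of injective maps with nondegenerate limit. Moreover $D(s)\subset f(\Om_\infty)$. Letting $s\uparrow r$ gives $T^D_{\Om_\infty}(x_\infty)\ge r$, which proves the lemma. I expect the normality of $\{g_n\}$ in step (a) to be the delicate point; this is where the convexity and the no-complex-line hypotheses on $\mathbb{X}_d$ are used.
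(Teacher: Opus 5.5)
Your outline is the standard normal-families argument behind the references the paper cites in place of a proof (Zimmer and Fathi--Houci). The Montel step for $f_n$, the maximum principle for $h_D$, your steps (b)--(d), and the Hurwitz conclusion are all fine. The gap is step (a), and the justification you give there is false. Local Hausdorff convergence controls $\overline{\Omega_n}\cap\overline{B(0,R)}$ only for each fixed $R$, and convex domains can tilt at infinity. Take $d=1$, $\Omega_\infty=\{\mathrm{Re}\,z<0\}$ and $\Omega_n=\{\mathrm{Re}(e^{i/n}z)<0\}$. Then $\Omega_n\to\Omega_\infty$ in $\mathbb{X}_1$, yet $z=e^{-i/n}(-1+iT)\in\Omega_n$ has $\mathrm{Re}\,z\to+\infty$ as $T\to\infty$. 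Every half-plane $\{\mathrm{Re}(az)<c\}$ containing $\Omega_\infty$ has $a>0$, so no enlargement of it contains the $\Omega_n$; taking products with $\{\mathrm{Re}\,z_j<0\}$ gives the same failure in every dimension. So there is no fixed taut target, and the tautness argument never starts. Rescuing it would require affine normalizations $A_n\to\mathrm{id}$ with every $A_n\Omega_n$ in one fixed product, which is a separate claim needing its own proof.

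The Kobayashi-distance route you only gesture at does work, but you need to supply the key estimate. For $w\in\overline{D(s')}$ with $s'<s<r$, the distance-decreasing property gives $K_{\Omega_n}(x_n,g_n(w))\le K_{D(s)}(0,w)\le C(s')$ for large $n$. What you then need is uniform properness: if $y_n\in\Omega_n$ and $K_{\Omega_n}(x_n,y_n)$ stays bounded, then $(y_n)$ stays bounded. Two facts give this.

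First, let $\Omega$ be convex, $x,y\in\Omega$, $L$ the complex line through them, and $\xi\in\partial\Omega\cap L$. Then $K_\Omega(x,y)\ge c\,\bigl|\log\frac{|x-\xi|}{|y-\xi|}\bigr|$ for a universal $c>0$. To see this, let $\{\mathrm{Re}\,\ell=\mathrm{Re}\,\ell(\xi)\}$ support $\Omega$ at $\xi$. The affine projection onto $L$ along the complex hyperplanes $\{\ell=\mathrm{const}\}$ is the identity on $L$ and maps $\Omega$ into a half-plane of $L$ with $\xi$ on its boundary.

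Second, let $\delta_{\Omega_n}(x_n;v)$ denote the distance from $x_n$ to $\partial\Omega_n$ inside $x_n+\mathbb{C}v$. Then $M:=\sup\{\delta_{\Omega_n}(x_n;v):|v|=1\}$ is finite for large $n$. Otherwise there would be discs $x_n+R_n\mathbb{D}v_n\subset\Omega_n$ with $R_n\to\infty$, and passing to the limit would give $x_\infty+\mathbb{C}v\subset\Omega_\infty$. This is exactly where the no-complex-line hypothesis on $\Omega_\infty$ enters.

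Now choose $\xi_n$ on the line through $x_n,y_n$ with $|x_n-\xi_n|\le M$. This yields $K_{\Omega_n}(x_n,y_n)\ge c\log\frac{|y_n-x_n|-M}{M}$, so the $g_n$ are locally uniformly bounded. Montel together with your (b)--(d) then finishes the proof.
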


\medskip

\subsection{Technical Results}
We begin by recalling the Morse lemma, which will be used repeatedly in the sequel.
 \begin{result}\cite[Theorem~1.7]{Bridsonbook}\label{R:Morselemma}
Suppose that $(X,d)$ is a $\delta$-hyperbolic space. For any $A \ge 1$ and $B \ge 0$
there exists $R:=R(\delta,A,B)>0$ such that if
$
\gamma:[a_1,b_1]\to X
$
is a geodesic and
$
\sigma:[a_2,b_2]\to X
$
is an $(A,B)$-quasi-geodesic with
$
\gamma(a_1)=\sigma(a_2)
\quad \text{and} \quad
\gamma(b_1)=\sigma(b_2),
$
then
\[
\max \left\{
\max_{t\in [a_1,b_1]}
d\bigl(\gamma(t),\sigma([a_2,b_2])\bigr),
\,
\max_{t\in [a_2,b_2]}
d\bigl(\sigma(t),\gamma([a_1,b_1])\bigr)
\right\}
\le R.
\]
\end{result}
The next result is Royden's localization result for Kobayashi-metric metric. We need the result in the proof of Theorem~\ref{T:local to global} and Theorem~\ref{T:Glob to loc}.

\begin{result}\label{R:Roydenestimate}\cite[Lemma~2]{Royden1971}
Let $D \st \cn$ be a Kobayashi hyperbolic domain and $U \subset \cn$ be a open set such that $U \cap D \neq \emptyset$ and $U \cap D$ is connected. Then 
\begin{align}\label{E:Roydenestimate}
    \kappa_{D \cap U}(z,v) \leq \coth{\big(K_{D}(z; D\setminus U)\big)}\kappa_{D}(z,v)\,\,\forall z \in D \cap U~\,\,\,\forall v\in \cn,
\end{align}
where $K_{D}(z;D\setminus U):=\inf\{K_{D}(z,w):w\in D\setminus U\}$.
\end{result}

\smallskip

\noindent
We sate the following result from \cite[Lemma~2.1]{BalogBonk2000} which we need to prove Theorem~\ref{T:expcon}.
\begin{result}\label{R:BalogBonk}
Suppose that $\Omega$ is a bounded domain in $\mathbb{C}^{d}$ with $\smoo^{2}$-smooth
boundary. Let $(a,b)$ denote the line segment joining $a,b\in\mathbb{C}^{d}$ and,
for $\eps>0$, let  $N_{\eps}
:=
\bigcup_{p\in\partial\Omega}
\left(p-\eps\nu_{p},\,p+\eps\nu_{p}\right),$
where $\nu_{p}$ is the outer unit normal to $\partial\Omega$ at the point $p$.
Then there exists a sufficiently small $\eps>0$ such that:
\begin{enumerate}
    \item[(i.)] For all $z\in N_{\eps}$ there exists a unique point
    $\pi_{\Omega}(z)\in\partial\Omega$ such that
    \[
    \|z-\pi_{\Omega}(z)\|=\delta_{\Omega}(z).
    \]

    \item[(ii.)] The signed boundary distance function
    \[
    \rho_{\Omega}:\mathbb{C}^{d}\to\mathbb{R}
    \]
 defined by $\rho_{\Om}(z)=-\partial_{\Om}(z)$ for $z \in \Om$ and $\rho_{\Om}(z)=\partial_{\Om}(z)$ if $z \in \Om^{c}$    is $\smoo^{2}$-smooth on $N_{\eps}$.
 \item[(iii.)] The projection map
    \[
    \pi_{\Omega}:N_{\eps} \to\partial\Omega
    \]
    is $\smoo^{1}$-smooth on $N_{\eps}$.
    \end{enumerate}
\end{result}

The following result due to Kosi\'nski--Nikolov--\"Okten  provides a sharp estimate of the Kobayashi distance in bounded strongly convex domain with $\smoo^{2, \alpha}$ a boundary for $\alpha \in (0,1]$. The result will be used in the proof of Theorem~\ref{T:expcon}. 
\begin{result}[Paraphasing  ({\cite[Theorem~1]{nikothmokt}})]\label{R:nikolov-thomas}
  Let $D\st \cn$ be a strongly pseudoconvex domain with $\smoo^{2,\alpha}$ boundary for some $\alpha \in (0, 1]$ and  $p\in \partial D$.  For $z,w \in D$ sufficiently close to $p$, define
\begin{equation}\label{eq:BD}
B_D(z,w)
=
|\big\langle z-w,\, \nu_{\pi(z)} \big\rangle|
+
\|z-w\|^{2}
+
\|z-w\|
\sqrt{\delta_D(z)},
\end{equation}
where $\nu_{\pi(z)}$ denotes the outward unit normal to $\partial D$ at the boundary point $\pi(z)$. Assume that
\begin{equation}\label{eq:AD}
A_D(z,w)
=
\frac{B_D(z,w)}
{\sqrt{\delta_D(z)\delta_D(w)}}.
\end{equation}
\noindent
There exist constants $0<c<C$(depending on $D$)  such that
\begin{equation}\label{eq:NT-estimate}
\log\!\left(1+cA_D(z,w)\right)
\leq
K_D(z,w)
\leq
\log\!\left(1+C A_D(z,w)\right),
\qquad
z,w\in D.
\end{equation}
\end{result}

\section{Proof of Theorem~\ref{P:exponetial}}\label{S:CATptoof}
In this section, we is present the proof of Theorem~\ref{P:exponetial}. 
\begin{proof}[Proof of Theorem~\ref{P:exponetial}]

We assume that $\xi=e_{1}$.
It is given that, for $j\in\{1,2\}$, the maps 
$\gamma_j:[0,\infty)\to\mathbb{B}^d
$ are rays satisfying
$
\gamma_1(\infty)=\gamma_2(\infty)=e_{1}$,
and 
\begin{align}\label{E:ae1}
|s-t|-ae^{-bs}
\leq
K_{\mathbb{B}^d}\bigl(\gamma_j(s),\gamma_j(t)\bigr)
\leq
|s-t|+ae^{-bs}\quad \forall t>s \geq 0.
\end{align}
\noindent
For sufficiently large $n\in\mathbb{N}$, we consider the three points on the image of the quasi-geodesic $\gamma_1$ defined by
\[
P:=\gamma_1(s), \qquad
Q:=\gamma_1(2s), \qquad
R:=\gamma_1(2s+n).
\]
Let $\triangle PQR$ denote the geodesic triangle in the metric space $(\mathbb{B}^d,K_{\mathbb{B}^d})$. Furthermore, let
$$
\sigma_s^n:[0,K_{\mathbb{B}^d}(P,R)]\to\mathbb{B}^d,\qquad
\eta_{1,s}:[0,K_{\mathbb{B}^d}(P,Q)]\to\mathbb{B}^d,
\qquad
\eta_{2,s}:[0,K_{\mathbb{B}^d}(Q,R)]\to\mathbb{B}^d
$$
be the arc-length parametrized geodesics representing the sides $PR$, $PQ$, and $QR$, respectively, of the geodesic triangle $\triangle
PQR$.

\medskip

\noindent
{\bf Step-I. Showing that { $\mathbf{\kb\big(\gamma_{1}(2s), \sigma^{n}_{s}(s)\big)\leq A e^{-Bs}}$ for some $\mathbf{A,B>0}$.}}
We infer from Result~ \ref{R:alexnder}   that  $(\bn, \kb)$ is  a ${\rm CAT}(-1)$ space. Therefore, for the geodesic triangle $\triangle PQR$,   there exists a comparison triangle $\triangle \bar{P}\bar{Q}\bar{R}$ in $(\mathbb{D}, \rho_{\mathbb{D}})$ (see  figure ~\ref{fig:1}). Here $\rho_{\D}(0,z)=2\tanh^{-1}(|z|)$.
\begin{center}

\begin{figure}[H]
\centering
\begin{tikzpicture}[scale=4]

\draw[thick] (0,0) circle (1);

\coordinate (A) at (0.2, 0.1);
\coordinate (B) at (-0.4, 0.3);
\coordinate (C) at (0.1, -0.5);
\coordinate (D) at (-0.1, -0.01);
\fill (A) circle (0.02);
\fill (B) circle (0.02);
\fill (C) circle (0.02);
\fill (D) circle (0.01);
\node[above right] at (A) {$\gamma_{1}(2s):=\bar{Q}$};
\node[left] at (B) {$\bar{R}:=\gamma_{1}(2s+n)$};
\node[below] at (C) {$\gamma_{1}(s):=\bar{P}$};

\draw[thick, black] (A) to[bend left=20] (B);
\draw[thick, black] (B) to[bend left=20] (C);
\draw[thick, black] (C) to[bend left=20] (A);
\draw[thick, black] (A) to[bend left=20] (D);

\node at ($(C)!0.5!(B)+(0,0.01)$) {$\mathcal{\bar{P}}^n$};


\path let
    \p1 = (B),
    \p2 = (C),
    \p3 = (A)
in
coordinate (Foot) at ($(B)!{((\x3-\x1)*(\x2-\x1)+(\y3-\y1)*(\y2-\y1))/
((\x2-\x1)^2+(\y2-\y1)^2)}!(C)$);
\end{tikzpicture}
\caption{$\triangle \bar{P}\bar{Q}\bar{R}$}
\label{fig:1}
\end{figure}
\end{center}
\noindent
 It follows from the definition of the comparison triangle that $$\rho_{\D}(\bar{P}, \bar{R})=\kb(P,R),~~\rho_{\D}(\bar{P}, \bar{Q})=\kb(P, Q),\,\rho_{\D}(\bar{Q}, \bar{R})=\kb(Q, R).$$
Let $\bar{\sigma}^{n}_{s}$, $\bar{\eta}_{1,s}$, $\bar{\eta}_{2,s}$ denotes the corresponding geodesics in $\D$. Note that by construction we have $$\kb\big(P, \sigma^{n}_{s}(s)\big)=s=\rho_{\D}\big(\bar{P}, \bar{\sigma}^{n}_{s}(s)\big).$$ Hence,  $\bar{\sigma}^{n}_{s}(s)$ is comparison point for ${\sigma}^{n}_{s}(s)$. Consequently, it follows from the definition of ${\rm CAT}(-1)$ space  that  
\begin{align}\label{E:ae0}
    \kb(\gamma_{1}(2s), {\sigma}^{n}_{s}(s))\leq \rho_{\D}(\bar{\gamma_{1}}(2s), \bar{\sigma}^{n}_{s}(s))\,\, \forall s \geq 0.
\end{align}
\noindent
We show that $\rho_{\D}(\bar{\gamma_{1}}(2s), \bar{\sigma}^{n}_{s}(s))\leq Ae^{-Bs}~~ \forall s\geq 0$  for some $A, B>0$. Let $\mathcal{\bar{P}}^n:=\bar{\sigma}^{n}_{s}(a^{n}(s))$ be the foot of the orthogonal projection from the point $\bar{Q}$ on the side $[\bar{P}, \bar{R}]$. Invoking the hyperbolic laws of sine in the triangle $\triangle \bar{P}\bar{Q}\bar{\mathcal{P}^n}$, we obtain that 
\begin{align}\label{E:ae2}
\frac{\sinh{\rho_{\D}\big(\bar{P}, \bar{Q}})}{\sin{\angle \bar{Q}\bar{\mathcal{P}^n}}\bar{P}}=\frac{\sinh{\rho_{\D}\big(\bar{P}, \bar{\mathcal{P}^n}}\big)}{\sin{\angle \bar{P}\bar{Q}\bar{\mathcal{P}^n}}}=\frac{\sinh{\rho_{\D}\big(\bar{Q},\bar{\mathcal{P}^n}}\big)}{\sin{\angle \bar{\mathcal{P}^n}\bar{P}\bar{Q}}}.
\end{align}
\noindent
Taking $s>0$ and $n\in\mathbb N$ sufficiently large, the orthogonal projection $\bar{\mathcal P}^{n}$ of $\bar Q$ onto the geodesic $[\bar P,\bar R]$ lies in the interior of the segment $[\bar P,\bar R]$, as will also follow from the estimates below. Since $\bar{\mathcal{P}^n}$ is the orthogonal projection of the point $\bar{Q}$ on the side $[\bar{P}, \bar{R}]$, we have $\angle \bar{Q}\bar{\mathcal{P}^n}\bar{P}=\frac{\pi}{2}$. Therefore, from \eqref{E:ae2}, we deduce the following
\begin{align}\label{E:ae3}
\sinh\big(\rho_{\D}(\bar{P}, \bar{\mathcal{P}^n})\big)
&= \sin\big(\angle \bar{P}\bar{Q}\bar{\mathcal{P}^n}\big)\cdot \sinh\big(\rho_{\D}(\bar{P}, \bar{Q})\big)\notag\\
\sinh{\big(\rho_{\D}(\bar{P}, \bar{\mathcal{P}^n})\big)}
&\leq \sinh\big(\rho_{\D}(\bar{P}, \bar{Q})\big)\notag\\
\rho_{\D}\big(\bar{P}, \bar{\mathcal{P}^n}\big)
&\leq \rho_{\D}\big(\bar{P}, \bar{Q}\big)\notag\\
\rho_{\D}\big(\bar{\sigma}^{n}_{s}(0), \bar{\sigma}^{n}_{s}(a^{n}(s))\big)
&\leq \kb(P,Q)\notag\\
a^{n}(s) &\leq s + a e^{-bs} 
\end{align}
Similarly, using the hyperbolic sine law on the triangle $\triangle \bar{Q}\bar{\mathcal{P}^{n}}\bar{R}$ we obtain the following:
\begin{align}\label{E:ae4}
\sinh{\rho_{\D}\big(\bar{\mathcal{P}^n}, \bar{R}\big)}&\leq    \sinh{\rho_{\D}\big(\bar{Q}, \bar{R}\big)}\notag\\
\rho_{\D}\big(\bar{\mathcal{P}^n}, \bar{R}\big)&\leq\rho_{\D}\big(\bar{Q}, \bar{R}\big)\notag\\
\rho_{\D}\big(\bar{\mathcal{P}^n}, \bar{R}\big)&\leq K_{\bn}(Q,R)\notag\\
\rho_{\D}\big(\bar{\mathcal{P}^n}, \bar{R}\big)&\leq n+a e^{-2bs}\notag\\
\rho_{\D}(\bar{P}, \bar{R})-\rho_{\D}(\bar{\mathcal{P}^n}, \bar{R})&\geq \rho_{\D}(\bar{P}, \bar{R})-n-a e^{-2bs}\notag\\
\rho_{\D}(\bar{P},\bar{\mathcal{P}^n})&\geq s-2ae^{-bs}\notag\\
a^{n}(s)&\geq s-2a e^{-bs}.
\end{align}
Now combining \eqref{E:ae3}, \eqref{E:ae4} we get \begin{align}\label{E:ae5}
    |a^{n}(s)-s|\leq 2ae^{-bs}.
\end{align}
We now claim the following fact:

\smallskip

\noindent 
\begin{claim*}\label{claim:squaregromov}\footnote{This claim is a standard fact. For the sake of completeness, and since we are unaware of an explicit reference, we provide a proof of the claim Section~\ref{S:appendix} (see  Lemma~\ref{L:AppenLemma}).  }  There exists $A_{1},M_{1}>0$ such that \begin{align}\label{E:ae6}
    \rho_{\D}(\bar{Q}, \bar{\sigma}^{n}_{s}(a^{n}(s)))\leq A_{1}\sqrt{\big\langle\bar{P}\mid \bar{R}\big\rangle_{\bar{Q}}^{\D}}\,\qquad\forall s \geq M_{1}.
\end{align}  
\end{claim*}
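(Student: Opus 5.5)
The claim is a statement purely about triangles in the hyperbolic plane $(\D,\rho_{\D})$, so I would prove it by hyperbolic trigonometry in right triangles. As in the text, write $F:=\bar{\mathcal P}^n=\bar\sigma^n_s(a^n(s))$ for the foot of the perpendicular from $\bar Q$ to $[\bar P,\bar R]$; for $s,n$ large it lies in the interior of $[\bar P,\bar R]$. Set
\[
h:=\rho_{\D}(\bar Q,F),\qquad x:=\rho_{\D}(\bar P,F),\qquad y:=\rho_{\D}(F,\bar R),
\]
so that $x+y=\rho_{\D}(\bar P,\bar R)$. The hyperbolic Pythagorean theorem in the right triangles $\triangle\bar P F\bar Q$ and $\triangle \bar R F\bar Q$ gives
\[
\cosh\rho_{\D}(\bar P,\bar Q)=\cosh h\,\cosh x,\qquad \cosh\rho_{\D}(\bar R,\bar Q)=\cosh h\,\cosh y .
\]

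The key elementary inequality is $\cosh h\cosh x\ge \cosh(x+\log\cosh h)$. To check it, put $c=\cosh h\ge1$. Then
\[
\cosh(x+\log c)=\tfrac12\bigl(ce^{x}+c^{-1}e^{-x}\bigr)\le \tfrac12\bigl(ce^x+ce^{-x}\bigr)=c\cosh x .
\]
Since $\cosh$ is increasing on $[0,\infty)$, this yields $\rho_{\D}(\bar P,\bar Q)\ge x+\log\cosh h$, and likewise $\rho_{\D}(\bar R,\bar Q)\ge y+\log\cosh h$. Adding the two and subtracting $\rho_{\D}(\bar P,\bar R)=x+y$ gives
\[
\langle \bar P\mid\bar R\rangle^{\D}_{\bar Q}\ \ge\ \log\cosh h .
\]

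Next I would use that the Gromov product is uniformly bounded for $s\ge M_1$. By the comparison-triangle side lengths and \eqref{E:ae1}, $\rho_{\D}(\bar P,\bar Q)\le s+ae^{-bs}$ and $\rho_{\D}(\bar Q,\bar R)\le n+ae^{-2bs}$, while $\rho_{\D}(\bar P,\bar R)\ge s+n-ae^{-bs}$. Hence $\langle\bar P\mid\bar R\rangle_{\bar Q}^{\D}\le \tfrac32 ae^{-bs}\le \tfrac32 a$, so in particular it is bounded once $s\ge M_1$ (any $M_1>0$ works). Consequently $\log\cosh h\le \tfrac32 a$, which confines $h$ to a compact interval $[0,H_0]$ with $H_0$ depending only on $a$.

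On that interval, $\log\cosh h\ge c_0h^2$ for some $c_0=c_0(H_0)>0$. This holds because $\log\cosh h/h^2$ is continuous and positive on $(0,H_0]$ and tends to $1/2$ as $h\to0$. Therefore
\[
h\le c_0^{-1/2}\sqrt{\langle\bar P\mid\bar R\rangle^{\D}_{\bar Q}},
\]
which is \eqref{E:ae6} with $A_1=c_0^{-1/2}$.

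The only delicate point is that the foot $F$ lies in the interior of $[\bar P,\bar R]$, so that both right triangles are genuine. The text already asserts this for large $s,n$, and \eqref{E:ae5} confirms it, since $a^n(s)\approx s$ while $\rho_{\D}(\bar P,\bar R)\approx s+n$. Everything else is elementary, so I expect no real obstacle beyond bookkeeping of the constants.
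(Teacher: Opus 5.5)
Your proof is correct and is essentially the paper's argument. The Pythagorean theorem in the two right triangles at the foot $\bar{\mathcal{P}}^n$ gives $\log\cosh\rho_{\D}(\bar Q,\bar{\mathcal{P}}^n)\le\langle\bar P\mid\bar R\rangle^{\D}_{\bar Q}$, and an elementary one-variable estimate turns this into the square-root bound. The differences are only technical: the paper multiplies the two identities and bounds the leftover ratio using that each base segment is no longer than the hypotenuse of its right triangle, whereas you argue triangle by triangle via $c\cosh x\ge\cosh(x+\log c)$; and the paper expands as the Gromov product tends to $0$, whereas you use $\log\cosh h\ge c_0h^2$ on a compact interval.

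One small caution: citing \eqref{E:ae5} to place the foot inside $[\bar P,\bar R]$ is circular, since \eqref{E:ae5} was derived assuming the right angle there. Instead, note that an angle $\ge\pi/2$ at $\bar P$ would force $\rho_{\D}(\bar Q,\bar R)\ge\rho_{\D}(\bar Q,\bar P)+\rho_{\D}(\bar P,\bar R)-\log 4$ by the hyperbolic law of cosines, which contradicts the quasi-geodesic bounds for large $s$; the symmetric argument at $\bar R$ works for large $n$.
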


We now find an upper bound of $\big\langle\bar{P}\mid \bar{R}\big\rangle_{\bar{Q}}^{\D}$. 
\begin{align}\label{E:ae7}
  \bigg\langle\bar{\gamma}_{1}(s)\mid \bar{\gamma}_{1}(2s+n)\bigg\rangle_{\bar{\gamma}_{1}(2s)}^{\D}  &\leq \frac{1}{2}\bigg[\rho_{\D}(\bar{\gamma}_{1}(s),\bar{\gamma}_{1}(2s))+\rho_{\D}(\bar{\gamma}_{1}(2s+n),\bar{\gamma}_{1}(2s))-\rho_{\D}(\bar{\gamma}_{1}(s),\bar{\gamma}_{1}(2s+n))\bigg]\notag\\
  &\leq \frac{1}{2}\bigg[s+ae^{-bs}+n+ae^{-2bs}-(s+n-ae^{-b s})\bigg]\notag\\
  &\leq \frac{a}{2} \bigg[e^{-bs}+e^{-2bs}+e^{-bs}\bigg]\notag\\
  &\leq 2a e^{-bs}.
\end{align}
Hence, it follows from \eqref{E:ae6} and \eqref{E:ae7} we get
\begin{align}\label{E:ae8}
    \rho_{\D}(\bar{\gamma}_{1}(2s), \bar{\sigma}^{n}_{s}(a^{n}(s)))&\leq a'.e^{-b's} \quad \forall s\geq M_{1},
\end{align}
where $a'=\sqrt{2a}A_{1}$ and $b'=b/2$.

\smallskip
\noindent
In view of  \eqref{E:ae8} and \eqref{E:ae5} we conclude that the following holds for all $s>M_{1}$ :
\begin{align}
\rho_{\D}(\bar{\gamma}_{1}(2s),\bar{\sigma}^{n}_{s}(s))&\leq  \rho_{\D}(\bar{\gamma}_{1}(2s), \bar{\sigma}^{n}_{s}(a^{n}(s)))+\rho_{\D}(\bar{\sigma}^{n}_{s}(a^{n}(s)),\bar{\sigma}^{n}_{s}(s))\notag\\
&\leq a'e^{-b's}+|a^{n}(s)-s|\notag\\
&\leq a'e^{-b's}+2ae^{-bs}\leq 2Ae^{-Bs},
\end{align}
where $A=\max\{a', 2a\}$ and $B=b/2$. This completes the proof of Step I. \hfill $\blacktriangleleft$

\medskip

\noindent
Note that the sequence of geodesic segments $\sigma^{n}_s : [0, \kb(P, R)] \to \mathbb{B}^{d}$  converges (upto a subsequence) locally uniformly on $[0, \infty)$ to a geodesic ray $\sigma_{s}:[0, \infty) \to \bn$ with $\sigma_{s}(0)=\gamma_{1}(s)$ and $\sigma_{s}(\infty)=\gamma_{1}(\infty)$ as $n\to \infty$. Let $\tau_{s}:[0, \infty) \to \bn$ be the geodesic ray such that $\tau_{s}(0)=\gamma_{2}(s)$ and $\tau_{s}(\infty)=\gamma_{2}(\infty)$. Since $\bn$ is ${\rm CAT}(-1)$ space, it follows from the exponential convergence property and the ${\rm  CAT}(-1)$ inequality that there exists $T_{s} \in \mathbb{R}$ such that  
\begin{align} \label{E:cat1}
K_{\bn}(\sigma_{s}(t),\tau_{s}(t+T_{s})) \leq e^{-t} \sinh{\bigg(\frac{K_{\bn}(\sigma_{s}(0), \tau_{s}(0))}{2}\bigg)}  =e^{-t}\sinh{\bigg(\frac{K_{\bn}(\gamma_{1}(s), \gamma_{2}(s))}{2}\bigg)},  
\end{align}
$ \forall t \geq \max\{0, -T_{s}\}$ (see \cite[Proposition ~4.1]{HeintzeImHof1977}). As will be shown in Step-III  that $T_s\to T_0$ as $s\to\infty$ for some $T_0\in\mathbb{R}$. Hence, for all sufficiently large $s$, $|T_s-T_0|<1/2$, and therefore $\max\{0,-T_s\}\geq \max\{0,-T_0+1/2\}$.

\smallskip

\noindent
We now show  there exists $A_{2}>0$ such that $K_{\bn}(\gamma_{1}(s), \gamma_{2}(s))<A_{2}$. Let $\tilde{\gamma}_{j}:[0, \infty) \to \bn$ be two geodesic ray with $\tilde{\gamma}_{j}(0)=\gamma_{j}(0)$ and $\tilde{\gamma}_{j}(\infty)=\gamma_{j}(\infty)$ for all $j \in \{1,2\}$. Then it follows from the  Morse lemma (Result~\ref{R:Morselemma}) there exists constant $C>0$ (independent of the endpoints) such   that for all $t\geq 0$ there exists $a_{j}(t)\geq 0$ such that the following holds 
\begin{align}\label{E:morseapp}
\kb(\tilde{\gamma}_{j}(a_{j}(t)),\gamma_{j}(t)) <C\quad \forall t \geq 0.
\end{align}
We now observe the following:

\begin{align*}
\kb(\tilde{\gamma}_{j}(0), \gamma_{j}(t))-\kb(\tilde{\gamma}_{j}(a_{j}(t)), \gamma_{j}(t))&\leq \kb(\tilde{\gamma}_{j}(a_{j}(t)), \tilde{\gamma}_{j}(0))\notag\\
t-(C+1)&\leq a_{j}(t).
\end{align*}
 Also we have 
\begin{align*}
a_{j}(t)=\kb(\tilde{\gamma}_{j}(0), \tilde{\gamma}_{j}(a_{j}(t))&\leq\kb(\tilde{\gamma}_{j}(0), \gamma_{j}(t))+\kb(\gamma_{j}(t), \tilde{\gamma}_{j}(a_{j}(t))) \leq t+(C+1).
\end{align*}
Therefore, from the above observation we have
\begin{align}\label{E:new0}
    |a_{j}(t)-t|\leq 1+C \quad \forall t \geq 0\quad \forall j \in \{1,2\}.
\end{align}
We now deduce that 
\begin{align}
    \kb(\gamma_{1}(s), \gamma_{2}(s))&\leq \kb(\gamma_{1}(s), \tilde{\gamma}_{1}(a_{1}(s)))+\kb(\tilde{\gamma}_{2}(a_{2}(s)), \tilde{\gamma}_{1}(a_{1}(s)))+\kb(\tilde{\gamma}_{2}(a_{2}(s)), \gamma_{2}(s))\notag\\
    &\leq 2C+\kb(\tilde{\gamma}_{2}(a_{2}(s)), \tilde{\gamma}_{1}(a_{1}(s)))\notag\\
   &\leq  2C+\kb(\tilde{\gamma}_{2}(a_{2}(s)), \tilde{\gamma}_{1}(a_{2}(s)))+\kb(\tilde{\gamma}_{1}(a_{1}(s)), \tilde{\gamma}_{1}(a_{2}(s)))\notag\\
   &=2C+C_{1}+|a_{2}(s)-a_{1}(s)|\notag\\
   &=2C+C_{1}+|a_{2}(s)-s|+|s-a_{1}(s)|<4C+2+C_{1},
\end{align}
where $0<C_{1}=\sup_{t\geq0}\kb(\tilde{\gamma}_{1}(t), \tilde{\gamma}_{2}(t))<\infty$. The constant $C_{1}>0$ is finite as $\tilde{\gamma}_{j}$ represents same Gromov boundary.   

\noindent
Therefore,  taking $t=s$ in  \eqref{E:cat1} we obtain  
\begin{align}\label{E:ae9}
 K_{\bn}(\sigma_{s}(s),\tau_{s}(s+T_{s}))& \leq A_{2}e^{-s}\quad \forall s\geq 0,  
\end{align}
where $A_{2}=\sinh(\frac{3C+1+C_{1}}{2})$.

\medskip

\noindent
Next, we apply a similar method as Step-I to the quasi-geodesic $\gamma_{2}$. For suitable large $n \in \mathbb{N}$, we  consider three points  on the image of quasi-geodesic  $\gamma_{2}$  defined as follows:  $$\mathcal{A}:=\gamma_2(s), ~\mathcal{B}:=\gamma_{2}(2s+T_{s}), ~\mathcal{C}:=\gamma_{2}(2s+n).$$
We will prove in Step-III that the  $T_{s} \to T_{0} \in \mathbb{R}$ as $s \to \infty$. Therefore, we can choose $M_{2} >0$ such that $2s+T_{s}>s$ for all $s \geq M_{2}$. We now consider the geodesic triangle $\triangle \mathcal{A}\mathcal{B}\mathcal{C}$ in the metric space $(\bn, K_{\bn})$. We assume that 
$$\tau_s^n : [0, \kb(\mathcal{A}, \mathcal{C})] \to \mathbb{B}^{d},\,\, \tau_{1,s}:[0, \kb(\mathcal{A}, \mathcal{B})] \to \mathbb{B}^{d},\,\,\tau_{2,s}:[0, \kb(\mathcal{B}, \mathcal{C})] \to \mathbb{B}^{d} $$
\noindent
denotes the arc-length parametrized geodesic in $\bn$ represents the sides $\mathcal{A}\mathcal{C}$, $\mathcal{A}\mathcal{B}$ and $\mathcal{B}\mathcal{C}$  respectively in the geodesic triangle $\triangle \mathcal{A}\mathcal{B}\mathcal{C}$.
\\

\medskip
\noindent
{\bf Step-II. Showing that  { $\mathbf{\kb\big(\gamma_{2}(2s+T_{s}), \tau^{n}_{s}(s+T_{s})\big)\leq A_{3} e^{-B_{3}s}}$ for some $\mathbf{A_{3}, B_{3}>0}$.}} Proceeding similarly to Step-I,  the geodesic triangle $\Delta \mathcal{A}\mathcal{B}\mathcal{C}$, admits a comparison triangle $\Delta \bar{\mathcal{A}}\bar{\mathcal{B}}\bar{\mathcal{C}}$ in $(\mathbb{D}, \rho_{\mathbb{D}})$. Using a similar argument as in \eqref{E:ae0}, we obtain the following:
\begin{align}\label{E:ae10}
    \kb(\gamma_{2}(2s+T_{s}), {\tau}^{n}_{s}(s+T_{s}))\leq \rho_{\D}(\bar{\gamma}_{2}(2s+T_{s}), \bar{\tau}^{n}_{s}(s+T_{s}))\,\quad \forall s \geq M_{2}.
\end{align}
\noindent
We show that $\rho_{\D}(\bar{\gamma}_{2}(2s+T_{s}), \bar{\tau}^{n}_{s}(s+T_{s}))\leq A'e^{-B's}$ for some $A', B'>0$ and $s\geq M_{2}$. Let $\mathcal{\bar{Q}}^n:=\bar{\tau}^{n}_{s}(b^{n}(s))$ is the foot of the orthogonal projection from the point $\bar{\mathcal{B}}$ on the side $[\bar{\mathcal{A}}, \bar{\mathcal{C}}]$. Similarly to Step-I, the following holds for all $n \in \mathbb{N}$ and $s \geq M_{2}$:

\begin{align}\label{E:ae11}
\rho_{\D}\big(\bar{\mathcal{A}}, \bar{\mathcal{Q}^n}\big)
&\leq \rho_{\D}\big(\bar{\mathcal{A}}, \bar{\mathcal{B}}\big)\notag\\
\rho_{\D}\big(\bar{\tau}^{n}_{s}(0), \bar{\tau}^{n}_{s}(b^{n}(s))\big)
&\leq \kb(\mathcal{A},\mathcal{B})\notag\\
b^{n}(s) &\leq s + T_{s}+ a e^{-bs}.
\end{align}
Analogously to \eqref{E:ae4} in Step-I, we deduce:
\begin{align*}
\rho_{\D}\big(\bar{\mathcal{Q}^n}, \bar{\mathcal{C}}\big)&\leq K_{\bn}(\mathcal{B},\mathcal{C})\notag\\
\rho_{\D}\big(\bar{\mathcal{Q}^n}, \bar{\mathcal{C}}\big)&\leq n-T_{s}+ae^{-2bs-2bT_{s}}\notag\\
\rho_{\D}(\bar{\mathcal{A}}, \bar{\mathcal{C}})-\rho_{\D}(\bar{\mathcal{Q}^n}, \bar{\mathcal{C}})&\geq \rho_{\D}(\bar{\mathcal{A}}, \bar{\mathcal{C}})-n+T_{s}-a e^{-2bs-bT_{s}}\notag\\
\rho_{\D}(\bar{A},\bar{\mathcal{Q}^n})&\geq s+T_{s}-ae^{-bs}-a e^{-2bs-bT_{s}}.
\end{align*}
We will show that $T_{s} \to T_{0}$ as $s \to \infty$ (see Step-III). Therefore,  we conclude from the above equation that there exists $a'>0$ such that 
\begin{align}\label{E:ae12}
b^{n}(s)&\geq s+T_{s}-2a'e^{-bs} \quad \forall s\geq M_{2}.
\end{align}
Combining \eqref{E:ae11}, \eqref{E:ae12} we obtain that 
 \begin{align}\label{E:ae16}
     |b^{n}(s)-(s+T_{s})|&\leq 2a' e^{-bs}.
 \end{align}
\noindent
We choose $A_{1}, M_{1}>0$ large enough in Step-I such that the following relation holds for all $s\geq \max\{M_{1}, M_{2}\}$: 
 \begin{align}\label{E:ae13}
    \rho_{\D}(\bar{\mathcal{B}}, \bar{\tau}^{n}_{s}(b^{n}(s)))\leq A_{1}\sqrt{\big\langle\bar{\mathcal{A}}\mid \bar{\mathcal{C}}\big\rangle_{\bar{\mathcal{B}}}^{\D}}.
\end{align}
\noindent
Proceeding similar to \ref{E:ae7}, we  get an upper bound of $\bigg\langle\bar{\gamma}_{2}(s)\bigm |\bar{\gamma}_{2}(2s+n)\bigg\rangle_{\bar{\gamma}_{2}(2s+T_{s})}^{\D}$ as follows:
\begin{align}\label{E:ae14}
  &\bigg\langle\bar{\gamma}_{2}(s)\mid \bar{\gamma}_{2}(2s+n)\bigg\rangle_{\bar{\gamma}_{2}(2s+T_{s})}^{\D}\notag\\  &\leq \frac{1}{2}\bigg[\rho_{\D}(\bar{\gamma}_{2}(s),\bar{\gamma}_{2}(2s+T_{s}))+\rho_{\D}(\bar{\gamma}_{2}(2s+n),\bar{\gamma}_{2}(2s+T_{s}))-\rho_{\D}(\bar{\gamma}_{2}(s),\bar{\gamma}_{2}(2s+n))\bigg]\notag\\
  &\leq \frac{1}{2}\bigg[s+T_{s}+ae^{-bs}+n-T_{s}+ae^{-2bs-bT_{s}}-(s+n-ae^{-bs})\bigg]\notag\\
  &\leq \frac{\alpha_{0}}{2} \bigg[e^{-bs}+e^{-2bs}+e^{-bs}\bigg]\notag\\
  &\leq \alpha_{0} e^{-bs},
\end{align}
where $\alpha_{0}>0$ depends only on $a$ and $T_{0}=\lim_{s\to\infty}T_{s}$, for all sufficiently large $s>0$.
Therefore, we infer from \eqref{E:ae13}, \eqref{E:ae14}  that 
\begin{align}\label{E:ae15}
\rho_{\D}\big(\bar{\gamma}_{2}(2s+T_{s}),\bar{\tau}^{n}_{s}(b^{n}(s)\big)&\leq a_{1}'e^{-b_{1}'s} \quad \forall s\geq \max\{M_{1},M_{2}\}.
\end{align}

\smallskip

\noindent
where $a_{1}'=A_{1}\sqrt{\alpha_{0}}$ and $b_{1}'=b/2$. We now combine \eqref{E:ae15}, \eqref{E:ae16} and conclude that the following holds for all $n \in \mathbb{N}$ and   $s\geq \max\{M_{1}, M_{2}\}$
\begin{align}\label{E:ae17}
\rho_{\D}\big(\bar{\gamma}_{2}(2s+T_{s}),\bar{\tau}^{n}_{s}(s+T_{s}))&\leq \rho_{\D}(\bar{\gamma}_{2}(2s+T_{s}),\bar{\tau}^{n}_{s}(b^{n}(s))\big)+\rho_{\D}(\bar{\tau}^{n}_{s}(b^{n}(s)),\bar{\tau}^{n}_{s}(s+T_{s})\big)\notag\\
    &\leq  a_{1}'e^{-b_{1}'s}+ 2a'e^{-bs}\leq 2A_{3}e^{-B_{3}s},
\end{align}
where $A_{3}:= \max\{a_{1}', 2a'\}$ and $B_{3}:=\min\{b_{1}',b\}=b/2$.
Hence, \eqref{E:ae10} and \eqref{E:ae17} completes the proof of Step-II. \hfill $\blacktriangleleft$
\medskip

\noindent
From the triangle inequality, we have the following for all $n \in \mathbb{N}$ and  for all $s \geq M_{0}:=\max \{M_{1}, M_{2}\}$
\begin{align*}
    \kb(\gamma_{1}(2s), \gamma_{2}(2s+T_{s}))&\leq  \kb(\gamma_{1}(2s), \sigma^{n}_{s}(s))+\kb(\sigma^{n}_{s}(s),\tau^{n}_{s}(s+T_{s}))\\&+\kb(\tau^{n}_{s}(s+T_{s}), \gamma_{2}(2s+T_{s})) . 
\end{align*}
From the conclusion of step-I and step-II and the above equation
\begin{align}\label{E:ae18}
  \kb(\gamma_{1}(2s), \gamma_{2}(2s+T_{s}))&\leq Ae^{-Bs}+  A_{3}e^{-B_{3}s}+\kb(\sigma^{n}_{s}(s),\tau^{n}_{s}(s+T_{s}))\,\, \forall n \in \mathbb{N}, s \geq M_{0}.
\end{align}
It follows from the Arzelà–Ascoli's theorem that $\tau^{n}_{s}$ has a convergent subsequence. It follwos from the negative curvatre property of the unit ball that $\tau^{n}_{s} \to \tau_{s}$ (upto a subsequence). Therefore, $\tau^{n}_{s}(s+T_{s}) \to \tau_{s}(s+T_{s})$, $\sigma^{n}_{s}(s) \to \sigma_{s}(s) $ as $n \to \infty$ locally uniformly on $[0, \infty)$. Let $\eps_{0}>0$ be any given real number then we choose large $N_{\eps_{0} ,s}\in \mathbb{N}$ such that \begin{align*}
    \kb(\sigma^{n}_{s}(s),\tau^{n}_{s}(s+T_{s}))&<\kb(\sigma_{s}(s),\tau_{s}(s+T_{s}))+\eps_{0} \quad \forall n \geq N_{\eps_{0} ,s}.\notag\\
\intertext{It turns out from \eqref{E:ae9}, and the above equation }
  \kb(\sigma^{n}_{s}(s),\tau^{n}_{s}(s+T_{s}))&<A_{2}e^{-s}+\eps_{0} \quad \forall n \geq N_{\eps_{0} ,s}.
\end{align*}
We now apply the above relation in \eqref{E:ae18}, and conclude that 
\begin{align}\label{E:aae18}
  \kb\big(\gamma_{1}(2s), \gamma_{2}(2s+T_{s})\big)&\leq Ae^{-Bs}+  A_{2}e^{-s}+ A_{3}e^{-B_{3}s}+\eps_{0}.
\end{align}
Since $\eps_{0}>0$ is arbitrary,  
\begin{align}\label{E:ae28}
  \kb(\gamma_{1}(2s), \gamma_{2}(2s+T_{s}))&\leq Ae^{-Bs}+A_{2}e^{-s}+  A_{3}e^{-B_{3}s}\leq 3a_{4}e^{-b_{4}s}\quad \forall s>M_{0},
\end{align}
where $a_{4}=\max\{A, A_{2} A_{3}\}$ and $b_{4}=\min\{b/2, 1\}$. 

\vspace{1.5 em}

\noindent
Next, we show the exponential convergence of $T_{s}$.
\medskip
\\
{\bf Step-III: Showing that there exist  $\mathbf{A_{0}, B_{0}, M_{0}>0}$ and $\mathbf{T_{0}\in \mathbb{R}}$ such that $\mathbf{|T_{s}-T_{0}|<A_{0}e^{-B_{0}s}}$ for all $\mathbf{s \geq M_{0}}$.}  Recall that the horofunction of $\bn$ with the base point $p \in \bn$ at the boundary point $e_{1}$ is defined as follow: 
$$h_{e_{1}}^{p}(z)=\lim_{w \to e_{1}}[\kb(z,w)-\kb(p,w)].$$

At first we show that  there exits $c_{j} \in \mathbb{R}$  and $\alpha_{j}, \beta_{j}>0$ such that $$|\big(h_{e_{1}}^{p}(\gamma_{j}(s))+s\big)-c_{j}|<\alpha_{j}e^{-\beta_{j}s}~~\,\,\forall s\geq M_{0}\quad\forall j \in \{1,2\}.$$

\noindent
Let $\phi_{j}(s)=h_{e_{1}}^{p}(\gamma_{j}(s))+s$ for $j \in \{1,2\}$ and $h>0$.  Then we have the following:
\begin{align}\label{E:ae19}
    &\phi_{j}(s+h)-\phi_{j}(s)\notag\\
    &=h_{e_{1}}^{p}(\gamma_{j}(s+h))+s+h-h_{e_{1}}^{p}(\gamma_{j}(s))-s\notag\\
    &=h_{e_{1}}^{p}(\gamma_{j}(s+h))-h_{e_{1}}^{p}(\gamma_{j}(s))+h \notag\\
    &=\lim_{t \to \infty}[\kb(\gamma_{j}(s+h),\gamma_{j}(t))-\kb(p,\gamma_{j}(t))]-\lim_{t \to \infty}[\kb(\gamma_{j}(s),\gamma_{j}(t))-\kb(p,\gamma_{j}(t))]+h \notag \\
    &=\lim_{t \to \infty}\big[\kb(\gamma_{j}(s+h),\gamma_{j}(t))-\kb(p,\gamma_{j}(t))
    -\kb(\gamma_{j}(s),\gamma_{j}(t))+\kb(p,\gamma_{j}(t))\big]+h \notag\\
    &=\lim_{t \to \infty}\big[\kb(\gamma_{j}(s+h),\gamma_{j}(t))
    -\kb(\gamma_{j}(s),\gamma_{j}(t))\big]+h.
\end{align}
 Since $\gamma_{j}|_{[s,t]}$ is $(1,ae^{-bs})$ quasi-geodesic for all $0\leq s\leq t$, we have the following estimate:
 \begin{align}\label{E:ae20}
   -ae^{-bs-bh}-ae^{-bs} &\leq \kb(\gamma_{j}(s+h),\gamma_{j}(t))
    -\kb(\gamma_{j}(s),\gamma_{j}(t))+h\leq+ae^{-bs-bh}+ae^{-bs}\notag\\ 
  -2ae^{-bs}  &\leq \kb(\gamma_{j}(s+h),\gamma_{j}(t))
    -\kb(\gamma_{j}(s),\gamma_{j}(t))+h \leq 2ae^{-bs}\notag\\
     -2ae^{-bs}  &\leq \lim_{t \to \infty}\big[\kb(\gamma_{j}(s+h),\gamma_{j}(t))
    -\kb(\gamma_{j}(s),\gamma_{j}(t))\big]+h \leq 2ae^{-bs}.
 \end{align}
Combining \eqref{E:ae20}, \eqref{E:ae19} we obtain 
\begin{align}\label{E:ae21}
   | \phi_{j}(s+h)-\phi_{j}(s)|\leq 2ae^{-bs}.
\end{align}
In particular, for all $t>s$ we have that $| \phi_{j}(t)-\phi_{j}(s)|\leq 2ae^{-bs}.$ Therefore, $\sup\{|\phi_{j}(s)|:s \in [0, \infty), j \in \{1,2\}\}<\infty$. It follows from \eqref{E:ae21} that, for any sequence of real numbers 
$\{s_n\}_{n\in\mathbb{N}}$ with $s_{n} \to \infty$, the sequence $\{\phi_j(s_n)\}$ is a Cauchy sequence. 
Consequently, there exists a constant $c_j\in\mathbb{R}$ such that
\[
\phi_j(s_n)\to c_j \qquad \text{as } n\to\infty .
\]
Moreover, \eqref{E:ae21} also implies that $c_j$ is the unique possible 
limit of $\{\phi_j(s_n)\}$ for every sequence 
$\{s_n\}_{n\in\mathbb{N}}$ with $s_n\to\infty$. Hence,
\[
|\phi_j(s)-c_j|\to 0 \qquad \text{as } s\to\infty .
\]
Therefore, given any $\varepsilon>0$, one can choose $t>0$ sufficiently 
large such that
\[
|\phi_j(t)-c_j|<\varepsilon.
\]
Hence, for all $s >0$, taking $t_{\eps}>s>0$ we obtain
\begin{align}
   |\phi_{j}(s)-c_{j}|&\leq  |\phi_{j}(s)-\phi_{j}(t_{\eps})|+|\phi_{j}(t_{\eps})-c_{j}|\notag\\
   &\leq 2ae^{-bs}+\eps.
\end{align}
Since $\eps>0$ in the above equation is arbitrary, it follows that for all $j \in \{1,2\}$
\begin{align}\label{E:ae26}
     |\phi_{j}(s)-c_{j}|&\leq 2ae^{-bs}\quad \forall s\geq 0.
\end{align}
\noindent
We now compute the value of $h_{e_{1}}^{p}(\sigma_{s}(s))$ and  $h_{e_{1}}^{p}(\tau_{s}(s+T_{s}))$.  We have 
\begin{align}\label{E:ae22}
 h_{e_{1}}^{p}(\sigma_{s}(s))&=\lim_{t \to \infty}[\kb(\sigma_{s}(s),\sigma_{s}(t))-\kb(p,\sigma_{s}(t))]\notag\\
 &=\lim_{t \to \infty}[\kb(\sigma_{s}(s),\sigma_{s}(t))-\kb(\sigma_{s}(0),\sigma_{s}(t))+\kb(\sigma_{s}(0),\sigma_{s}(t))-\kb(p,\sigma_{s}(t))]\notag\\
 &=-s+h_{e_{1}}^{p}(\sigma_{s}(0)).
\end{align}
Similarly, 
\begin{align}\label{E:ae23}
 h_{e_{1}}^{p}(\tau_{s}(s+T_{s}))&=  -s-T_{s}+  h_{e_{1}}^{p}(\tau_{s}(0)).
\end{align}
Note that the function $h_{e_{1}}^{p}(z)$ is $1$-Lipschitz function, i.e. $$|h_{e_{1}}^{p}(z_{1})-h_{e_{1}}^{p}(z_{2})|\leq \kb(z_{1},z_{2})\quad \forall z_{1}, z_{2} \in \bn.$$ In particular, 
\begin{align}\label{E:ae24}
    |h_{e_{1}}^{p}(\sigma_{s}(s))-h_{e_{1}}^{p}(\tau_{s}(s+T_{s}))|\leq \kb(\sigma_{s}(s),\tau_{s}(s+T_{s})).
\end{align}
\smallskip

\noindent
Therefore, by substituting the relations obtained from \eqref{E:ae22} and \eqref{E:ae23} into \eqref{E:ae24}, and subsequently applying the inequality in \eqref{E:ae9}, we deduce that

\begin{align}\label{E:ae25}
    |-s+h_{e_{1}}^{p}(\sigma_{s}(0))+s+T_{s}-h_{e_{1}}^{p}(\tau_{s}(0))|&\leq A_{2}e^{-s}\notag\\
    \big|T_{s}-(-h_{e_{1}}^{p}(\gamma_{1}(s))+h_{e_{1}}^{p}(\gamma_{2}(s)))\big|&\leq  A_{2}e^{-s}.
\end{align}

\noindent
We now infer form \eqref{E:ae25} and \eqref{E:ae26} that
\begin{align}\label{E:ae27}
 \big|T_{s}-(c_{2}-c_{1})\big|&\leq   |T_{s}+s-s+h_{e_{1}}^{p}(\gamma_{2}(s))-h_{e_{1}}^{p}(\gamma_{1}(s))+h_{e_{1}}^{p}(\gamma_{1}(s))-h_{e_{1}}^{p}(\gamma_{2}(s))+c_{1}-c_{2}| \notag\\
 &\leq |T_{s}+h_{e_{1}}^{p}(\gamma_{1}(s))-h_{e_{1}}^{p}(\gamma_{2}(s))|+|s+h_{e_{1}}^{p}(\gamma_{2}(s))-c_{2}|+|-s-h_{e_{1}}^{p}(\gamma_{1}(s))+c_{1}|,\notag \\
 \intertext{using \eqref{E:ae25}, \eqref{E:ae26} in the above equation we get the following }
\big|T_{s}-(c_{2}-c_{1})\big| &\leq A_{2}e^{-s}+4ae^{-bs}\leq a_{0}e^{-b_{0}s},
\end{align}
where $a_{0}=\max\{A_{2},4a\}$ and $b_{0}=\min\{1, b\}$. Consequently, it finishes the   step-III. 

\hfill $\blacktriangleleft$

\medskip

\noindent
We infer from the discussion in step-I, step-II, step-III that the following holds for $s>M_{0}>0$ 
\begin{align*}
\kb(\gamma_{1}(2s), \gamma_{2}(2s+(c_{2}-c_{1})))&\leq \kb(\gamma_{1}(2s), \gamma_{2}(2s+T_{s}))+\kb(\gamma_{2}(2s+T_{s}),\gamma_{2}(2s+(c_{2}-c_{1})).\notag\\
\intertext{We now use \eqref{E:ae28},  and  the quasi-geodesic property of $\gamma_{2}$ in  the above relation, and  deduce the  following:}
\kb(\gamma_{1}(2s), \gamma_{2}(2s+(c_{2}-c_{1})))&\leq a_{4}e^{-b_{4}s}+|T_{s}-(c_{2}-c_{1})|+ae^{-b\cdot \min\{2s+T_s,2s+(c_{2}-c_{1})\}}.\notag\\
\intertext{In view of \eqref{E:ae27} and the preceding equation, and assuming $M_{0}>0$ large enough we conclude that there is suitable $a_{1}>0$ such that 
}
\kb\big(\gamma_{1}(2s), \gamma_{2}(2s+(c_{2}-c_{1}))\big)&\leq a_{4}e^{-b_{4}s}+a_{0}e^{-b_{0}s}+a_{1}e^{-2bs}\leq 3\alpha' e^{-\beta' s} \quad \forall s\geq M_{0}.
\end{align*}
where $\alpha'=\max\{a_{4},a_{0},a_{1}\}$ and $\beta'=\min\{b_{4},b_{0},2b\}$=$\min\{1,b/2\}$. Now letting $\alpha=\max\big\{\alpha', \sup\{e^{\beta' s}\kb\big(\gamma_{1}(2s),\gamma_{2}(2s+(c_{2}-c_{1}))\big): s\in [0, M_{0}]\}\big\}$ and   $\beta'=\beta$, we conclude that there exist $T_{0} \in \re$ and $\alpha>0$ such that 
\begin{align*}
  \kb\big(\gamma_{1}(s), \gamma_{2}(s+T_{0})\big)&\leq \alpha e^{-\frac{s}{2}\cdot \min\{1, b/2\}}\quad  \forall s \geq 0.
\end{align*}

\end{proof}
\section{Proof of the Theorem~\ref{T:expcon}}\label{S:expoproof}

We first prove a proposition which will be used in the proof of Theorem~\ref{T:expcon}.
\begin{proposition}\label{P:estimate}
 Let $\Om\St \cn$ be a strongly convex domain with $\smoo^{3}$ boundary and let $\xi \in \partial\Om$. Let    $\phi_{1}, \phi_{2} : \mathbb{D} \to \Omega$ be  two complex geodesics with
$\phi_1(1)=\phi_2(1)=\xi$. Assume that  $
a_j:=\phi_j'(1)$ and $ 
\mu_j:=\langle a_j,\nu\rangle
$ for $ j=1,2$
where $\nu_{\xi}$ is the outward unit normal to $\partial\Omega$ at $\xi$. Define
$$
T:=\frac12\log\frac{\mu_2}{\mu_1},\qquad
\lambda:=e^{-2T}=\frac{\mu_1}{\mu_2},\qquad
V:=a_1-\lambda a_2, \quad \alpha \in (1/2,1].
$$

Then for
$$
s_t:=1-\tanh t,\qquad
z_t:=\phi_1(\tanh t),\qquad
w_t:=\phi_2(\tanh(t+T)),
$$
the following estimates hold as $t \to \infty$:
\begin{equation}\label{eq:st}
s_t=2e^{-2t}+O(e^{-4t}),\qquad 
s_{t+T}=\lambda s_t+O(s_t^2).
\end{equation}

\begin{equation}\label{eq:ztwt}
z_t-w_t=-s_tV+O(s_t^{1+\alpha}).
\end{equation}

\begin{equation}\label{eq:delta}
\delta_{\Omega}(z_t)=\mu_1 s_t+O(s_t^{1+\alpha}),\qquad
\delta_{\Omega}(w_t)=\mu_1 s_t+O(s_t^{1+\alpha}).
\end{equation}

\begin{equation}\label{eq:normal}
\|z_t-w_t\|=O(s_t),\qquad
\left\langle z_t-w_t,\,
\nu_{\pi(z_t)}
\right\rangle
=O(s_t^{1+\alpha}).
\end{equation}
\end{proposition}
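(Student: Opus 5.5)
The plan is to Taylor-expand each complex geodesic at the boundary point $1$, using the $\smoo^{1,\alpha}$ regularity from Result~\ref{R:Pgeodesic}(ii). Since $\phi_j\in\smoo^{1,\alpha}(\overline{\D})$, for $\zeta\in[0,1]$ we have
\[
\phi_j(\zeta)=\xi+(\zeta-1)a_j+O(|1-\zeta|^{1+\alpha}),
\]
that is, $\phi_j(1-s)=\xi-s a_j+O(s^{1+\alpha})$.

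\textbf{Estimate \eqref{eq:st}.} This is elementary:
\[
1-\tanh t=\frac{2e^{-2t}}{1+e^{-2t}}=2e^{-2t}+O(e^{-4t}).
\]
Hence $s_{t+T}=2e^{-2T}e^{-2t}+O(e^{-4t})=\lambda s_t+O(s_t^2)$.

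\textbf{Estimate \eqref{eq:ztwt}.} Substituting gives
\[
z_t=\xi-s_ta_1+O(s_t^{1+\alpha}),\qquad w_t=\xi-s_{t+T}a_2+O(s_{t+T}^{1+\alpha})=\xi-\lambda s_ta_2+O(s_t^{1+\alpha}),
\]
using $s_{t+T}\asymp s_t$ and $s_t^2=O(s_t^{1+\alpha})$. Subtracting yields $z_t-w_t=-s_tV+O(s_t^{1+\alpha})$, and in particular $\|z_t-w_t\|=O(s_t)$.

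\textbf{Estimate \eqref{eq:delta}.} Use the $\smoo^2$ signed distance $\rho_\Om$ of Result~\ref{R:BalogBonk}, which satisfies $\nabla\rho_\Om(\xi)=\nu_\xi$ (real gradient, paired via $\mathrm{Re}\langle\cdot,\cdot\rangle$). For $z_t$ in $N_\eps$, Taylor expansion at $\xi$ gives
\[
\delta_\Om(z_t)=-\rho_\Om(z_t)=\mathrm{Re}\langle s_ta_1,\nu_\xi\rangle+O(s_t^{1+\alpha})+O(s_t^2).
\]
By Result~\ref{R:Pgeodesic}(iv), $\langle a_1,\nu\rangle>0$. One point must be justified here: $\mu_j$ is real, i.e.\ $\langle a_j,\nu\rangle$ is real positive. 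I would take this as the meaning of (iv); it also follows since $\delta_\Om(\phi_j(r))>0$ and $\phi_j(\partial\D)\subset\partial\Om$ forces the tangential derivative $i a_j$ to be orthogonal in the real sense. This gives $\delta_\Om(z_t)=\mu_1s_t+O(s_t^{1+\alpha})$. Similarly, $\delta_\Om(w_t)=\mu_2 s_{t+T}+O(s_t^{1+\alpha})=\mu_2\lambda s_t+O(s_t^{1+\alpha})=\mu_1 s_t+O(s_t^{1+\alpha})$, and this is exactly why $\lambda=\mu_1/\mu_2$ is chosen.

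\textbf{Normal component in \eqref{eq:normal}.} Write
\[
\langle z_t-w_t,\nu_{\pi(z_t)}\rangle=\langle z_t-w_t,\nu_\xi\rangle+\langle z_t-w_t,\nu_{\pi(z_t)}-\nu_\xi\rangle.
\]
The first term equals $-s_t\langle V,\nu_\xi\rangle+O(s_t^{1+\alpha})$, and $\langle V,\nu_\xi\rangle=\mu_1-\lambda\mu_2=0$. For the second term, Result~\ref{R:BalogBonk}(iii) says $\pi_\Om$ is $\smoo^1$ and the normal field is $\smoo^2$ (the boundary is $\smoo^3$), so $\|\nu_{\pi(z_t)}-\nu_\xi\|=O(\|z_t-\xi\|)=O(s_t)$. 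The second term is therefore $O(s_t^2)$, and the total is $O(s_t^{1+\alpha})$.

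\textbf{Main obstacle.} The delicate point is the reality and positivity of $\mu_j$, together with the exact cancellation $\langle V,\nu_\xi\rangle=0$. If $\mu_j$ were complex, the cancellation would leave an imaginary part of order $s_t$. I would settle this first, via the boundary behaviour of $\rho_\Om\circ\phi_j$ on $\partial\D$ (Hopf-type argument, or the Lempert theory that $\overline{\phi_j'(\zeta)}\,\zeta\,\nu$ is positive).
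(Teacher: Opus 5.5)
Your proposal is correct and follows the paper's proof essentially step for step. The shared steps are the $\smoo^{1,\alpha}$ expansion $\phi_j(1-h)=\xi-ha_j+O(h^{1+\alpha})$, the Taylor expansion of the $\smoo^2$ signed distance function from Result~\ref{R:BalogBonk} for \eqref{eq:delta}, and, for \eqref{eq:normal}, the splitting $\nu_{\pi(z_t)}=\nu_\xi+O(s_t)$ combined with the cancellation $\langle V,\nu_\xi\rangle=\mu_1-\lambda\mu_2=0$. The reality and positivity of $\mu_j$, which you flag as the main obstacle, is exactly what the paper takes directly from Result~\ref{R:Pgeodesic}(iv), so no further argument is needed there.
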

\begin{proof}
It follows from Result~\ref{R:Pgeodesic} that $\mu_{j}>0$ for all $j=1,2$. By definition  $$s_{t}-2e^{-2t}=-\frac{2}{e^{2t}(1+e^{2t})}=O(e^{-4t})\quad {\rm as}~~ t \to \infty.$$
From the above relation we obtain 
\begin{align*}
   \frac{ s_{t+T}-\lambda s_{t}}{s_{t}^2}&=\frac{\lambda(1-\lambda)(1+e^{-2t})}{2(1+\lambda e^{-2t})}.
    \end{align*}
Since the right-hand side of the above equation is bounded above as $t \to \infty$, $s_{t+T}=\lambda s_{t}+O(s_{t}^2).$ Therefore, we conclude the relation \eqref{eq:st}.

\smallskip

\noindent
It follows from the Result~\ref{R:Pgeodesic} that the map $\phi_{j}$ has $\smoo^{1, \alpha}$ extension on $\overline{\D}$ for any $\alpha \in (0,1]$ for all $j=1,2.$ Here $\alpha>1/2$. By the Fundamental Theorem of Calculus together with the H\"older continuity of $\phi_j'$, we obtain, for each $j=1,2$,
\begin{align*}
    \phi_{j}(1-h)=\xi-ha_{j}+O(h^{1+\alpha})\quad {\text as} \quad h\to 0^{+}.
\end{align*}
Since $s_{t} \to 0 $ as $t\to \infty$, we get 

\begin{align}\label{E:Tay1}
    \phi_{j}(1-s_{t})=\xi-s_{t}a_{j}+O(s_{t}^{1+\alpha})\quad {\text as} \quad t\to \infty.
\end{align} 
\noindent
We now have the following estimate on $\|z_{t}-w_{t}\|$ as $t \to \infty$:
\begin{align}\label{E:zt-wt}
    z_{t}-w_{t}&=\phi_{1}(\tanh{t})-\phi_{2}(\tanh{(t+T)})\notag\\
    &=\phi_{1}(1-s_{t})-\phi_{2}(1-s_{t+T}).\notag\\
    \intertext{From \eqref{E:Tay1} and above relation we get} 
 z_{t}-w_{t}   &=-s_{t}a_{1}+O(s_{t}^{1+\alpha})+a_{2}s_{t+T}+O(s_{t+T}^{1+\alpha})\notag\\
    &=-s_{t}a_{1}+O(s_{t}^{1+\alpha})+a_{2}(\lambda s_{t}+O(s_{t}^2))+O(s_{t+T}^{1+\alpha})\notag\\
    &=s_{t}(-a_{1}+\lambda a_{2})+O(s_{t}^2)+O(s_{t+T}^{1+\alpha})\notag\\
    &=-Vs_{t}+O(s_{t}^2)+O(s_{t+T}^{1+\alpha})=-Vs_{t}+R_{1}(t),
    \end{align}
where $R_{1}(t)=O(s_{t}^{1+\alpha})$ as $t \to \infty.$ 
This proves the relation \eqref{eq:ztwt}.
Note that Result \ref{R:BalogBonk} ensures the $\smoo^2$ smoothness of  the signed distance function $\rho:\cn \to \re$ defined by 
\begin{equation}
\rho(z)=
\begin{cases}
-\partial_{\Omega}(z), & \text{if } z\in\Omega,\\[2mm]
\phantom{-}\partial_{\Omega}(z), & \text{if } z\in\Omega^{c}
\end{cases}
\end{equation}
in tubular neighborhood of $\Om$. The Taylor expansion of $\rho$ at $\xi$ implies that there exists $\delta>0$ such that, for every $h\in\mathbb{C}^n$ with $\|h\|<\delta$, the following estimate holds:
\begin{align}
    \rho(\xi+h)&=\rl \langle \nabla \rho(\xi), h\rangle +O(\|h\|^2).
\end{align}
Therefore, we get the following as $t \to \infty$
\begin{align}\label{E:bdistzt}
\partial_{\Om}(z_{t})&=-\rho(\xi+z_{t}-\xi)\notag\\
&=-\rl \big\langle \nabla \rho(\xi), (z_{t}-\xi)\big\rangle +O(\|(z_{t}-\xi)\|^2)\\\notag
&=-\rl \big\langle  \nu_{\xi}, (\phi_{1}(\tanh{t})-\xi)\big\rangle +O(\|(z_{t}-\xi)\|^2).\notag\\
\intertext{From \eqref{E:Tay1}, we get }
\partial_{\Om}(z_{t})&=-\rl \big\langle  \nu_{\xi}, (-a_{1}s_{t}+O(s_{t}^{1+\alpha}))\big\rangle +O(\|(z_{t}-\xi)\|^2).\notag\\
&=s_{t}\mu_{1}+O(s_{t}^{1+\alpha})+O(s_{t}^{2})=s_{t}\mu_{1}+O(s_{t}^{1+\alpha}).
\end{align}
Similarly, we deduce that 
\begin{align}\label{E:bdistwt}
    \partial_{\Om}(w_{t})&=-\rl \big\langle  \nu_{\xi}, (-a_{2}s_{t+T}+O(s_{t+T}^{1+\alpha}))\big\rangle +O(\|(w_{t}-\xi)\|^2).\notag\\
&=s_{t+T}\mu_{2}+O(s_{t+T}^{1+\alpha})+O(s_{t+T}^{2}).\notag\\
\intertext{Using \eqref{eq:st} we get }
\partial_{\Om}(w_{t})&=\lambda s_{t}\mu_{2}+O(s_{t}^2)+O(s_{t+T}^{1+\alpha})+O(s_{t+T}^{2})=\mu_{1}s_{t}+O(s_{t}^{1+\alpha}).
\end{align}
Hence,  \eqref{E:bdistzt}, \eqref{E:bdistwt} proves the relation \eqref{eq:delta}.

\smallskip

\noindent
Let $N_{\eps}=\cup_{p \in \partial \Om}\{p+t\nu_{p}: t \in (-\eps, \eps)\}$.
It is a fact that there exists $\eps>0$ such that for all $z \in N_{\eps}$ there exists a unique point $\pi(z) \in \partial\Om$ such that $\delta_{\Om}(z)=\|z-\pi(z)\|$. It follows from Result~\ref{R:BalogBonk} that the map $\pi: N_{\eps}\to  \cn$ is a $\smoo^{1}$ map. Since the domain $\Om$ has $\smoo^{3}$ boundary, the map $z \mapsto \nu_{z}$ is a $\smoo^{1}$ smooth map on $\Om$. Consequently, $z\mapsto \nu_{\pi(z)}$ is $\smoo^{1}$ smooth on $N_{\eps}$. Note that $z_{t} \to \xi$ as $t\to \infty$. Hence, we get that
\begin{align}
   \|\nu_{\pi(z_{{t}})}-\nu_{\xi}\| <C\|z_{t}-\xi\|.
\end{align}
It follows from \eqref{E:Tay1} that $ \|\nu_{\pi(z_{{t}})}-\nu_{\xi}\|=O(s_{t})$. Therefore,  $\nu_{\pi(z_{t})}=\nu_{\xi}+R(t)$ where $R(t)=O(s_{t})$. We now deduce the following:
\begin{align}\label{E:innerpro}
    \left\langle z_t-w_t,\,
\nu_{\pi(z_t)}
\right\rangle&=\left\langle z_t-w_t,
\nu_{\xi}+R(t)
\right\rangle \notag\\
&=\big\langle z_t-w_t,
\nu_{\xi}
\big\rangle+\left\langle z_t-w_t,
R(t)
\right\rangle.\notag\\
\intertext{Using \eqref{E:zt-wt} in the above relation we get that}
\left\langle z_t-w_t,\,
\nu_{\pi(z_t)}
\right\rangle&=\big\langle -Vs_{t}+R_{1}(t),
\nu_{\xi}
\big\rangle+\langle z_t-w_t,
R(t)\rangle\notag\\
&=-s_{t}\big\langle V, \nu_{\xi}\big\rangle+\big\langle R_{1}(t), \nu_{\xi}\big\rangle+\langle z_t-w_t,
R(t)\rangle \notag\\
&=-s_{t}\big\langle a_{1}-\lambda a_{2}, \nu_{\xi}\big\rangle+\big\langle R_{1}(t), \nu_{\xi}\big\rangle+\langle z_t-w_t,
R(t)\rangle,
\end{align}
$\|R_{1}(t)\|=O(s_{t}^{1+\alpha})$ and $\|R(t)\|=O(s_{t})$ as $t \to \infty$.

\smallskip

\noindent
Note that it follows from the hypothesis that $\big\langle a_{1}-\lambda a_{2}, \nu_{\xi}\big\rangle=\mu_{1}-\lambda \mu_{2}=0$. Now from Cauchy-Schwarz inequality and \eqref{E:innerpro} we conclude that  $\left\langle z_t-w_t,\,
\nu_{\pi(z_t)}
\right\rangle=O(s_{t}^{1+\alpha})$. This proves \eqref{eq:normal}. This completes the proof of the proposition.

\end{proof}

\begin{proof}[Proof of Theorem~\ref{T:expcon}]
Let $\gamma_{1}, \gamma_{2}: [0, \infty) \to \Om$ be two geodesic rays such that $\gamma_{1}(\infty)=\gamma_{2}(\infty)=\xi \in \partial \Om.$ It follows from Result~\ref{R:Pgeodesic} that every real geodesic rays lies in a complex geodesic. Let $\phi_{1}, \phi_{2}:\D \to  \Om$ be two complex geodesics such that $\gamma_{j}[0, \infty) \st \phi_{j}(\D)$  for $j=1,2$. We first assume that the $\phi_{1}, \phi_{2}$ are two distinct complex geodesics. After composing with a suitable automorphism of $\D$ we can assume that $\phi_{j}(0)=\gamma_{j}(0)$ for $j=1,2$. Now, composing with a suitable rotation, we can assume that $\phi_{j}(1)=\gamma_{j}(\infty)=\xi$. If we assume that $\gamma_{j}(t)=\phi_{j}(s_{j}(t))$, then it turns out that $s_{j}:[0,\infty) \to \D$ is a geodesic ray with respect to the Poincar\'e distance with $s_{j}(0)=0$ and $s_{j}(\infty)=1$ for $j=\{1,2\}$. Consequently $s_{j}(t)=\tanh{t}$.  We are now in a position to invoke Proposition~\ref{P:estimate}. Associated with each complex geodesic $\phi_j$, $j=1,2$, we consider the
constants $a_j$ and $T$ provided by Proposition~\ref{P:estimate}. We now have  $\gamma_{1}(t)=\phi_{1}(\tanh{t})$ and $\gamma_{2}(t+T)=\phi_{2}(\tanh{(t+T)})$. We use Proposition~\ref{P:estimate}  taking $z_{t}:=\gamma_{1}(t)$  and $w_{t}:=\gamma_{2}(t+T)$.  In view of Result~\ref{R:nikolov-thomas}, we conclude that there exists $C>0$ such that 
\begin{align}\label{E:kdist}
    K_{\Om}(z_{t},w_{t})\leq \ln{\bigg(1+C.A_{\Om}(z_{t},w_{t})\bigg)},
\end{align}
where $$A(z_{t},w_{t})= \dfrac{|\big\langle (z_{t}-w_{t}), \nu_{\pi(z_{t})}\big \rangle|+\|z_{t}-w_{t}\|^2+\|z_{t}-w_{t}\|\sqrt{\partial_{\Om}(z_{t})}}{\sqrt{\partial_{\Om}(z_{t})\partial_{\Om}(w_{t})}}.$$ 
It  follows from  \eqref{eq:delta} that there exist   $M>-T>0$ and $c,C>0$ such that for all $t>M$ the following holds: 
\begin{align}\label{E:dbesti}
cs_{t}\leq\partial_{\Om}(z_{t})\leq Cs_{t},\notag\\
cs_{t}\leq\partial_{\Om}(w_{t})\leq Cs_{t}.
\end{align}
\noindent
Now  from \eqref{eq:normal} it follows that we can choose $C, M>0$ suitably large such that $$\big|\big\langle (z_{t}-w_{t}), \nu_{\pi(z_{t})}\big \rangle\big|<Cs_{t}^{1+\alpha} \quad \forall t>M.$$  From \eqref{eq:normal} we gat that $\|z_{t}-w_{t}\|<Cs_{t}$ for all $t\geq M$. Therefore, invoking Proposition~\ref{P:estimate} and using  \eqref{E:dbesti} we infer that there exists $C_{1}>0$ such that  for all $t>M$ we have 
\begin{align}\label{E:kdist1}
   A(z_{t},w_{t})\leq \dfrac{Cs_{t}^{1+\alpha}+C^2s_{t}^2+Cs_{t}^{3/2}}{cs_{t}}<C_{1}s_{t}^{1/2}. 
\end{align}
Since $\ln{(1+x)}\leq x$ for all $x \geq 0$ and $\alpha>0$, we infer from  \eqref{E:kdist},~~\eqref{E:kdist1}  that   
$$K_{\Om}(z_{t}, w_{t})\leq C.C_{1}s_{t}^{1/2}\quad\forall t\geq M.$$  Hence, \eqref{eq:st}  implies   $K_{\Om}(\gamma_{1}(t), \gamma_{2}(t+T))\leq \tilde{C}e^{-t}$ for all $t\geq M$  for some  $\tilde{C}>0$. Taking the constant $\tilde{C}>0$ large enough we conclude that \begin{align}\label{E:fin0}
    K_{\Om}(\gamma_{1}(t), \gamma_{2}(t+T))\leq \tilde{C}e^{-t} \quad \forall  t\geq 0.
\end{align} 

\smallskip

\noindent
We now prove the lower estimate of  $K_{\Om}(\gamma_{1}(t), \gamma_{2}(t+T))$. It follows from the expression of $A(z_{t},w_{t})$ that 
\begin{align}\label{E:estim2}
A(z_{t},w_{t})&\geq \frac{\|z_{t}-w_{t}\|}{\sqrt{\partial_{\Om}(w_{t})}}
\end{align}
Note that $V=0$ implies that $a_{1}=\lambda a_{2}$. Then Result~\ref{R:Pgeodesic} implies that the two geodesics $\phi_{1}$ and $\phi_{2}$ are same. Since we have assumed that the geodesics $\phi_{1}$ and $\phi_{2}$ are distinct,  $V \neq 0 $. Note that  \eqref{E:zt-wt} allows us to choose $M>0$ large enough such that 
\begin{align*}
    \frac{\|z_{t}-w_{t}\|}{s_{t}}>\frac{V}{2}\quad \forall t>M.
\end{align*}
Combining \eqref{E:estim2} and the above equation, we infer that 
\begin{align}\label{E:estm3}
A(z_{t},w_{t})\geq \frac{V}{2}s_{t}^{1/2} \quad\forall t>M.
\end{align}
\noindent

\noindent
It follows from the upper bound of $A(z_{t},w_{t})\to 0$ as $t \to \infty$. It is a fact that  $\ln{(1+x)}\geq x/2$ for all $0<x<1$. 
Taking $M>0$ large enough  and appealing to Result~\ref{R:nikolov-thomas} we conclude that 
\begin{align}
    K_{\Om}(z_{t},w_{t})\geq \ln{\big(1+cA(z_{t},w_{t})\big)}\geq c\cdot\frac{A(z_{t},w_{t})}{2}\quad \forall t>M.
\end{align}
\noindent
We now use \eqref{E:estm3},  and  the above equation and conclude that there exists $c_{1}>0$ such that

\begin{align}\label{E:fin1}
   K_{\Om}(z_{t},w_{t})&\geq c_{1}s_{t}^{1/2}\geq c_{1}e^{-t}.
\end{align}

\smallskip

\noindent
Therefore, \eqref{E:fin0}  and \eqref{E:fin1} yields that $$\lim_{t \to \infty}\frac{1}{t}\ln{K_{\Om}(\gamma_{1}(t), \gamma_{2}(t+T))}=-1.$$

\medskip

\noindent
We now assume that both the geodesic rays  $\gamma_{1}, \gamma_{2}$ are in the image of same complex geodesic. Let $\phi:\D \to \Om$ be a complex geodesic  such that $\gamma_{j}[0,\infty)\st \phi(\D)$. If $\gamma_{j}(t)=\phi(s_{j}(t))$. Then it turns out that
$s_{j}:[0,\infty) \to \D$ are two geodesic rays in $\D$ with respect to the Poincar\'e distance for $j=1,2$ with $s_{1}(\infty)=s_{2}(\infty)$.  Now it follows from Result~\ref{R:expcball} that there exists $T_{0}\in \mathbb{R}$ such that \begin{align}\label{E:expoball1}
    \lim_{t \to \infty}\frac{1}{t}\ln{K_{\D}\big(s_{1}(t), s_{2}(t+T_{0})\big)}=-2.
\end{align}

\noindent
It is a fact that for the complex geodesic $\phi$
 there exists a  map $p:\Om\to \D$ such that $p\circ \phi =i_{\D}.$ From this it follows that $K_{\Om}(\phi(x), \phi(y))=K_{\phi(\D)}(\phi(x), \phi(y))$ for  all $x,y \in \D$. Therefore, we conclude that $$K_{\Om}\big(\phi(s_{1}(t)),\phi(s_{2}(t+T_{0}))\big)=K_{\phi(\D)}\big(\phi(s_{1}(t)),\phi(s_{2}(t+T_{0}))\big)=K_{\D}(s_{1}(t),s_{2}(t+T_{0})).$$
We now use the above relation in \eqref{E:expoball1} and  obtain that 

 $$\lim_{t \to \infty}\frac{1}{t}\ln{K_{\Om}\big(\gamma_{1}(t), \gamma_{2}(t+T_{0})\big)}=-2 .$$

Hence, we  conclude  the theorem.
 
\end{proof}
\section{Proof of Theorem~\ref{T:Application} and Theorem~\ref{T:c(f)}}\label{S:appli}

The proof of Theorem~\ref{T:Application} follows the general framework of the proof given \cite[Theorem~5.5]{Zimmer2018}. We therefore restrict ourselves to a sketch of the argument, focusing on the features that are specific to the present setting.  A key ingredient in the proof is Theorem~\ref{T:expcon}, whose application plays a crucial role in establishing the result. We begin by establishing the following lemma, which is analogous to \cite[Proposition~2.1]{Zimmer2018}.

\begin{lemma}\label{L:AppLemma}
 Let $d \geq 2$ and $\{e_{1}, e_{2}, \cdots, e_{d}\}$ denotes the standard basis of $\cn$. Suppose that  $\mcd \st \cn$ is a convex domain with the following property:
 \begin{itemize}
     \item [(i.)] $\mcd \cap {\rm Span}_{\cplx}\{e_{2}, \cdots ,e_{d}\}=\emptyset$,
     \item [(ii.)] $\mcd \cap {\rm Span}_{\cplx}\{e_{1}\}=\{ze_{1}: {\rm Im}(z)>0\}$,
     \item[(iii.)]
     $\mcd$ is biholomorphic to a bounded,  strongly convex domain $\mathcal{G} $ with $\smoo^3$ boundary in $\cn$.
 \end{itemize}
 Then \begin{equation*}
     \liminf_{r \to \infty}\frac{1}{r}\ln{\partial_{\mcd}(ie^{r}e_{1};v)}\geq \frac{1}{2}\quad~\forall v \in {\rm span}_{\cplx}\{e_{2}, e_{3}, \cdots ,e_{d}\}.
 \end{equation*}
\end{lemma}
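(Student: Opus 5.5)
Following the scheme of \cite[Proposition~2.1]{Zimmer2018}, with Theorem~\ref{T:expcon} playing the role of Result~\ref{R:expcball}, I would argue by contradiction. Suppose there are $v\in{\rm Span}_{\cplx}\{e_2,\dots,e_d\}$ with $\|v\|=1$, some $\eta<1/2$, and $r_n\to\infty$ with $\partial_{\mcd}(ie^{r_n}e_1;v)\le e^{\eta r_n}$. The half-line $L:=\{ie^{r}e_1\}$ lies in the slice $\mcd\cap\cplx e_1$, which is the upper half-plane $H$. Let $P$ be the coordinate projection $z\mapsto z_1$. By (i) and convexity, $P(\mcd)$ misses $0$ and is convex, and by (ii) it contains $H$. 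Since $\mcd$ is convex and contains no complex line (it is biholomorphic to a bounded domain), I expect a supporting argument to give $P(\mcd)\subset H$. Then $P$ is a holomorphic retraction of $\mcd$ onto $H e_1$, so $r\mapsto ie^{r}e_1$ is a Kobayashi geodesic line in $\mcd$ lying in the complex geodesic $z\mapsto ze_1$.

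Next I would build a second geodesic ray with the same endpoint that is \emph{not} in this complex geodesic. The natural candidate is a translate $\sigma(r)=ie^{r}e_1+w$ with $w$ a small multiple of $v$ chosen so that $\sigma(0)\in\mcd$. By convexity, $\sigma(r)\in\mcd$ for all $r\ge 0$ as soon as $w\in\mcd$-directions at large height; alternatively I would take the Kobayashi geodesic ray from $\sigma(0)$ toward the same boundary point. Transporting to $\mathcal G$ by the biholomorphism $F$, Theorem~\ref{T:expcon} applies to $F\circ L$ and $F\circ\sigma$, provided they land at the same point of $\partial\mathcal G$ and lie in different complex geodesics. The landing point is shared because the two rays stay within bounded Kobayashi distance, and $\mathcal G$ has visibility. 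They lie in different complex geodesics because $\sigma$ leaves $\cplx e_1$ and complex geodesics through a real geodesic are unique (Result~\ref{R:Pgeodesic}(iii)). Theorem~\ref{T:expcon} then gives $K_{\mcd}(L(r),\sigma'(r+T))\ge e^{-(1+o(1))r}$.

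The contradiction comes from an upper bound. In the complex line $ie^{r_n}e_1+\cplx v$ the point $ie^{r_n}e_1$ sits at distance about $e^{\eta r_n}$ from the boundary, while the comparison ray is at Euclidean offset $O(1)$ in the $v$-direction. Convexity bounds the Kobayashi metric by the one-variable half-plane estimate $\kappa_{\mcd}(z;v)\le 1/\partial_{\mcd}(z;v)$ up to a constant, which would give $K_{\mcd}\lesssim e^{-\eta r_n}$. That is consistent with the rate $e^{-r_n}$, so the bound has to be sharpened. I would do this by rescaling. Apply the affine maps $A_n(z)=(z_1/e^{r_n},\,e^{-\eta r_n}z')$ and use Frankel/Pinchuk compactness of $\mathbb X_{d,0}$ to pass to a limit domain in which the configuration degenerates. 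Comparing the $e^{-2r}$ normal scale with the $e^{-r}$ tangential scale inherent in Theorem~\ref{T:expcon} should then force $\eta\ge 1/2$.

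I expect the main obstacle to be this final quantitative step: matching the exponent $1/2$ exactly, rather than just showing the bound is positive. What I would try first is to bound $K_{\mcd}(L(r),\sigma(r+T))$ from above using the one-dimensional slice estimate along $v$, keeping careful track of the $O(1)$ offset in $\sigma$ and of the half-plane geometry in the $e_1$ direction. Since the Kobayashi distance transverse to the slice scales like offset divided by $\sqrt{\text{height}\cdot\partial_{\mcd}(\cdot;v)}$, I would compare $e^{-(r-\eta r)}$-type quantities against the lower bound $e^{-r}$ and extract the inequality $\eta\ge 1/2$ from them.
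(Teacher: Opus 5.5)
\textbf{There is a genuine gap:} the two estimates you want to collide point in the wrong directions.

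Your contradiction hypothesis $\partial_{\mcd}(ie^{r_n}e_1;v)\le e^{\eta r_n}$ is an \emph{upper} bound on the slice distance. The estimate $\kappa_{\mcd}(z;v)\le 1/\partial_{\mcd}(z;v)$ then yields nothing, because concluding $K_{\mcd}\lesssim e^{-\eta r_n}$ would need $\partial_{\mcd}\gtrsim e^{\eta r_n}$, which is the statement being proved. For the same reason, the lower bound that Theorem~\ref{T:expcon} gives for distinct complex geodesics is not what is needed. The rescaling/compactness step you sketch also has no mechanism that produces the exponent $1/2$.

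The argument the paper takes from Zimmer's Proposition~2.1 uses the two ingredients the other way round:
\begin{itemize}
\item Theorem~\ref{T:expcon} is used only as an \emph{upper} bound. In either case it gives $\limsup_{t\to\infty}\frac1t\log K\le -1$.
\item Convexity is used as a \emph{lower} bound. Project onto the half-plane cut out by a supporting hyperplane at the nearest boundary point of $\mcd\cap(p+\cplx v)$. This gives
\[
K_{\mcd}(p,q)\ge \tfrac12\log\Bigl(1+\frac{\|p-q\|}{\partial_{\mcd}(p;p-q)}\Bigr).
\]
\end{itemize}
With $p=ie^{2t}e_1$ and $q=p+\delta v$, the lower bound is $\gtrsim \delta/\partial_{\mcd}(ie^{2t}e_1;v)$. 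Comparing it with the upper bound $\lesssim e^{-(1-\eps)t}$ gives $\partial_{\mcd}(ie^{r}e_1;v)\gtrsim e^{(1-\eps)r/2}$ with $r=2t$, which is the lemma. Under your contradiction hypothesis the same two bounds give $e^{-\eta r_n}\lesssim e^{-(1-\eps)r_n/2}$, i.e.\ $\eta\ge 1/2$. No blow-up argument is needed, and it does not matter whether the two rays lie in the same complex geodesic.

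Two further details must be fixed for the exponent to come out as $1/2$.
\begin{itemize}
\item \emph{Parametrization.} In the paper's normalization, $K_{\D}(0,\tanh t)=t$, so the distance in the upper half-plane between $i$ and $iy$ is $\frac12|\log y|$. The unit-speed ray is therefore $t\mapsto ie^{2t}e_1$, not $r\mapsto ie^{r}e_1$. The rate $e^{-t}$ of Theorem~\ref{T:expcon} is $e^{-r/2}$ in your variable, and this factor $2$ is exactly where the $1/2$ comes from. With your normalization ``$e^{-(1+o(1))r}$'' the argument would produce exponent $1$, which already fails for the Siegel domain $\{\mathrm{Im}\, z_1>|z_2|^2\}$.
\item \emph{The second ray.} The plain translate $ie^{r}e_1+w$ is in general not a geodesic. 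One takes instead $\gamma_v(t)=\delta v+(\alpha_{\delta v}+e^{2t})ie_1$, where $\mcd\cap(\delta v+\cplx e_1)=\{ze_1+\delta v:\mathrm{Im}\, z>\alpha_{\delta v}\}$. This is a geodesic. The time shift produced by Theorem~\ref{T:expcon} is forced to be $0$, as you see by projecting with $z\mapsto z_1$. Finally, $ie^{2t}e_1+\delta v=\gamma_v(s_{t,v})$ with $|t-s_{t,v}|=O(e^{-2t})$, so comparing with this point instead of $\gamma_v(t)$ costs nothing at the scale $e^{-t}$.
\end{itemize}
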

\begin{proof}
By similar observation as in the proof of \cite[Proposition~2.1]{Zimmer2018} (see \cite[Observation~2.12]{Zimmer2018}) we have $\forall v \in {\rm Span}_{\cplx}\{e_{2}, \cdots ,e_{d}\}$ there exists $\alpha_{v} \in [0,\infty]$ such that 
\begin{align*}
 \{ze_{1}+v: {\rm Im}(z)>\alpha_{v}\}=\mcd \cap (v+\cplx e_{1}).   
\end{align*}
Let $\mathcal{V}$ be the set of all unit vectors in ${\rm Span}\{e_{2}, \cdots ,e_{d}\}$. Note that $ie_{1}\in\mathcal{D}$ and $\mathcal{D}$ is open, hence, there exists $r>0$ such that $B(ie_{1},r)\subset\mathcal{D}$. Choose $\delta>0$ such that $2\delta<r$. Then, 
\begin{align*}
    ie_{1}+2\delta \D.v \subset \mcd\quad \forall v \in \mathcal{V}.
\end{align*}

We now consider the  curves $\gamma, \gamma_{v}:[0, \infty) \to \cn $ defined as follows
\begin{align*}
    \gamma(t)&=ie^{2t}e_{1}\notag\\
  \gamma_{v}(t)&=\delta v+ (\alpha_{\delta v}+e^{2t})ie_{1}, \quad~{\rm where }~~v\in \mathcal{V}. 
 \end{align*}
Then arguing similar as in the proof of \cite[Proposition~2.1]{Zimmer2018} we conclude that the curves $\gamma,\gamma_{v}$ are geodesic rays in $(\mcd, K_{\mcd})$ with $\lim_{t \to \infty}K_{\mcd}(\gamma(t), \gamma_{v}(t))=0$. Therefore, 
if $\psi:\mcd \to \mathcal{G}$ is a biholomorphism, then $\psi \circ \gamma$ and $\psi \circ \gamma_{v}$ are two strongly asymptotic  geodesic ray in $\mathcal{G}$. Note that the Busemann function  is $1$-Lipschitz. Therefore, the condition $K_{\mathcal{G}}(\psi \circ \gamma(t),\psi \circ \gamma_{v}(t)) \to 0$ as $t \to \infty$ implies  that  $\psi(\gamma(0))$ and $\psi(\gamma_{v}(0))$ are on same horosphere centered at $\psi\circ \gamma(\infty)=\psi\circ \gamma_{v}(\infty) \in \partial \mathcal{G}$. We now invoke Theorem~\ref{T:expcon} to conclude that there exists $A_{v}>0$ such that 
\begin{align}\label{E:appe0}
K_{\mcd}(\gamma(t), \gamma_{v}(t))=K_{\mathcal{G}}(\psi \circ\gamma(t),\psi\circ\gamma_{v}(t))<A_{v}e^{-t}\quad \forall t \geq 0.
\end{align}
Let us define $s_{t,v}=t+\frac{1}{2}\ln{(1-\frac{\alpha_{\delta v}}{e^{2t}})}$ for suitable large $t$. Then proceeding similar as in the proof of \cite[Proposition~2.1]{Zimmer2018} we conclude that 
\begin{align}\label{E:appe1}
   | K_{\mcd}(\gamma(t), \gamma_{v}(t))-K_{\mcd}(\gamma(t), \gamma_{v}(s_{t,v}))|&\leq \frac{1}{2}|\ln{(1-\frac{\alpha_{\delta v}}{e^{2t}})}| \notag\\
&=\frac{\alpha_{\delta v}}{2}e^{-2t}+O(e^{-4t}).
\end{align}
Therefore, combining \eqref{E:appe0} and \eqref{E:appe1}
 we derive the following for every $v \in \mathcal{V}$
 \begin{align}\label{E:Eapp1}
     K_{\mcd}(\gamma(t), \gamma_{v}(s_{t,v}))&\leq e^{-t}\big(A_{v}+\frac{\alpha_{\delta v}}{2}e^{-t}+O(e^{-2t})\big)\notag\\
     \frac{1}{t}\ln{K_{\mcd}(\gamma(t), \gamma_{v}(s_{t,v}))}&\leq -1+\frac{1}{t}\ln{\big(A_{v}+O(e^{-t})\big)} \notag\\
     \limsup_{t \to \infty}\frac{1}{t}\ln{K_{\mcd}(\gamma(t), \gamma_{v}(s_{t,v}))}&\leq -1. 
   \end{align}

Let $t_{n} \to \infty$ be  a sequence of real numbers such that $\liminf_{t \to \infty}\frac{1}{t}\ln{\partial_{\mcd}(e^{t}ie_{1};v)}=\lim_{n \to \infty} \frac{1}{2t_{n}}\ln{{\partial_{\mcd}(e^{2t_{n}}ie_{1};v)}}
$. Following the similar computation as \cite[Proposition~2.1]{Zimmer2018}
we conclude that there exist $C>0$ and $v_{0}\in \mathcal{V}$ such that 
\begin{align*}
    K_{\mcd}(\gamma(t_{n}),\gamma_{v_{0}}(s_{t_{n},v_{0}}))&\geq \frac{C}{\partial_{\mcd}(e^{2t_{n}}ie_{1};v)} \notag\\
    \frac{1}{t_{n}}\ln{K_{\mcd}(\gamma(t_{n}),\gamma_{v_{0}}(s_{t_{n},v_{0}}))}&\geq -\frac{1}{t_{n}}\ln{\partial_{\mcd}(e^{2t_{n}}ie_{1};v)}+\frac{\ln{C}}{t_{n}}\notag\\
\intertext{taking  $n \to \infty$ in the above equation, we infer from \ref{E:Eapp1} that }
-1&\geq -2\liminf_{t \to \infty}\frac{1}{t}\ln{\partial_{\mcd}(e^{t}ie_{1};v)}.
\end{align*}
Consequently, $\frac{1}{2}\leq \liminf_{t \to \infty}\frac{1}{t}\ln{\partial_{\mcd}(e^{t}ie_{1};v)}~~\forall v\in \mathcal{V}$. It follows form the definition that the inequality is true for all $v \in {\rm span}\{e_{2}, e_{3}, \cdots ,e_{d}\}$. This finishes the proof of the Lemma.
\end{proof}
We now give the proof of Theorem~\ref{T:Application} 
\begin{proof}[Proof of Theorem~\ref{T:Application} ]
Let $f:\mathbb{X}_{d,0} \to \mathbb{R}$ be a continuous intrinsic function satisfying the hypothesis of the theorem. To obtain a contradiction, suppose that the conclusion of the theorem is false. Then it follows that there exists a sequence $\mcd_{n} \in \mathbb{X}_{d,0}$ such that:
\begin{itemize}
    \item [(a.)]
  For all $n \in \mathbb{N}$,  $\mcd_{n}$ has $\smoo^{2, \alpha}$ boundary for some $\alpha>0$,
  \smallskip
    \item [(b.)]
    For all $n \in \mathbb{N}$,  $\mcd_{n}$ is not strongly pseudoconvex,
    \smallskip
   \item [(c.)]
   $\big|f(\mcd_{n},z)-f(\Om,0)\big|\leq \frac{1}{n}$, outside some compact subset of $\mcd_{n}$.
\end{itemize}

\smallskip

\noindent
Given some $\mcd \in \mathbb{X}_{d}$, let ${\rm BlowUp}(\mcd) \subset \mathbb{X}_{d}$ denotes the set of all $\mcd_{\infty}$ in $\mathbb{X}_{d}$ such that there exists a sequence  $x_{n} \in \mcd$, $x_{\infty} \in \mcd_{\infty}$ and affine map $A_{n} \in {\rm Aff}(\cn)$ with the following property:
\begin{itemize}
    \item
 For every compact subset $K \st \mcd$,   there exists $N\in \mathbb{N}$ such that $x_{n} \notin K$ for all $n >N$.
 \item 
 $\big(A_{n}\mcd, A_{n}x_{n}\big)$ converges to $(\mcd_{\infty}, x_{\infty})$ in $\mathbb{X}_{d,0}$.
    \end{itemize}
    \noindent
  We now infer from  \cite[Proposition~5.8]{Zimmer2018}  that for each $n \in \mathbb{N}$, there exists $\mcd_{n, \infty} \in {\rm BlowUp}(\mcd_{n})$ such that the following holds:
 \begin{itemize}
     \item [(1.)]
     $\mcd_{n, \infty} \in ie_{1}+\mathbb{K}_{d}$,
     \item[(2.)]
     $\mcd_{n, \infty} \cap {\rm Span}_{\cplx}\{e_{2},e_{3},\cdots ,e_{d}\}=\emptyset$
     \item[(3.)] 
     $\mcd_{n, \infty} \cap \cplx e_{1}=\{ze_{1}: {\rm Im}(z_{1})>0\}$,
     \item[(4.)] 
     $\partial_{\mcd_{n,\infty}}(e^{r}ie_{1};e_{2})\leq e^{\frac{r}{2+\alpha}}$ for $\alpha>2$ and $r\geq 1.$
 \end{itemize}
 Here $\mathbb{K}_{d} \st \mathbb{X}_{d}$ be the set of convex domains $D$  such that
 \begin{itemize}
     \item 
     $D_{1}e_{i} \st \Om$ for each $1\leq i \leq d,$
     \smallskip
     \item 
     $Z_{i} \st \Om=\emptyset$ for each $1\leq i \leq d$
 \end{itemize}
where $D_{1}=\{z \in \cplx: |{\rm Im}(z)|+|{\rm Re}(z)|<1\}$ for  $1\leq i\leq d$  and  $Z_{i}=e_{i}+{\rm Span}_{\cplx}\{e_{i+1}\cdots,e_{d}\}$. It is proved in  \cite{Franknel91}  that $\mathbb{K}_{d,0}=\{(D,0):D \in \mathbb{K}\}$ is compact. Now proceeding similarly as in the proof of \cite[Theorem 5.5]{Zimmer2018} we get that
\begin{align}
    \big|f(\mcd_{n, \infty},z)-f(\Om,0) \big|\leq \frac{1}{n}\quad \forall n \in \mathbb{N}.
    \end{align}
Since $\mathbb{K}_{d}\st \mathbb{X}_{d}$ is a compact subset, after passing to a subsequence,  we get  $\mcd_{n, \infty} \to \mcd_{\infty}$ in $\mathbb{X}_{d}$. Consequently, the continuity of the function $f$ implies that
\begin{align*}
    f(\mcd_{\infty},z)=f(\Om,0)~\quad \forall z \in \mcd_{\infty}.
\end{align*}
Therefore, from the hypothesis of the theorem we get that $\mcd_{\infty}$ is biholomorphic to the strongly convex domain $\Om$.  Since $\mcd_{n, \infty} \to \mcd_{\infty} $ with respect to the local Hausdorff distance, it follows  form the property $(2), (3), (4)$ of $\mcd_{n, \infty}$ and from the local Hausdorff convergence that the limit  domain $\mcd_{\infty}$ satisfies the following condition:
\begin{itemize}
    \item [(i.)]
    $\mcd_{\infty} \cap {\rm Span}_{\cplx}\{e_{2},\cdots, e_{d}\}=\emptyset$,
    \item[(ii.)]
    $\mcd_{\infty} \cap \cplx e_{1}=\{ze_{1}:{\rm Im}(z)>0\}$,
    \item [(iii.)]
    $\partial_{\mcd_{\infty}}(e^{r}ie_{1};e_{2}) \leq e^{\frac{r}{2+\alpha}}$ for $r\geq 1$.
\end{itemize}
Therefore, the domain $\mcd_{\infty}$ satisfies the conditions of Lemma~\ref{L:AppLemma}. We see that  the property $(\rm iii.)$ of $\mcd_{\infty}$ obtained as above contradicts  the conclusion of  Lemma~\ref{L:AppLemma}. 
\begin{align*}
    \frac{1}{2} \leq\liminf_{r \to \infty} \frac{1}{r}\ln{\partial_{\mcd_{\infty}}(e^{r}ie_{1};e_{2})} \leq \frac{1}{r}\ln{e^{\frac{r}{2+\alpha}}}=\frac{1}{2+\alpha}.
\end{align*}
This implies that $\alpha\leq 0$, which contradicts the hypothesis.
\end{proof}

The proof of Theorem~\ref{T:app2} proceeds along the same lines as the proof of Theorem~\ref{T:Application}.

We now give the proof of Theorem~\ref{T:c(f)}.
\begin{proof}[Proof of Theorem~\ref{T:c(f)} ]
We first show that $f\circ\gamma $ is $(1,Ae^{-bs})$ quasi-geodesic on $[s,\infty)$ for $s\geq 0$. Clearly, $\kb(f(\gamma(s)), f(\gamma(t)))\leq |s-t|$ for all $s,t\geq 0$. It follows from our hypothesis  and  the reverse triangle inequality that  for $v>u \geq s\geq 0$ :
\begin{align}\label{E:ff1}
\big|\kb(\gamma(0),f(\gamma(u)))-\kb(\gamma(0),f(\gamma(v)))\big|&\leq \kb(f(\gamma(u)),f(\gamma(v)))\notag\\
\kb(p,f(\gamma(u)))-u+u-v+v-\lambda+\lambda-\kb(p,f(\gamma(v)))&\leq \kb(f(\gamma(u)),f(\gamma(v)))\notag\\
(v-u)-Ae^{-bv}-Ae^{-bu}&\leq \kb(f(\gamma(t)),f(\gamma(s)))\notag\\
(v-u)-2Ae^{-bu}&\leq \kb(f(\gamma(u)),f(\gamma(v))).
\end{align}
Switching the role of $u,v$ in the above equation, we obtain that
$$|u-v|-2Ae^{-bs}\leq \kb(f(\gamma(u)),f(\gamma(v)))\leq |u-v|\quad \forall u,v \geq s\geq 0.$$
If $\sigma:[0, \infty) \to \bn$ is another geodesic ray with $\sigma(\infty)=\xi$, then there exist  $T\in \mathbb{R}$, $A_{0}>0$ such that $$ \kb(\sigma(t+T), \gamma(t))\leq A_{0}e^{-t}\quad \forall t \geq 0.$$

\begin{align*}
    \big|\kb(\gamma(0), f(\sigma(t+T)))-t-\lambda\big|&\leq\big|\kb(\gamma(0), f(\sigma(t+T)))-\kb(\gamma(0), f(\gamma(t)))\big|\notag\\
&+\big|\kb(\gamma(0), f(\gamma(t)))-t-\lambda\big|\notag\\
&\leq \kb(f(\sigma(t+T)),f(\gamma(t)))+Ae^{-bt}\notag\\
&\leq \kb(\sigma(t+T), \gamma(t))+Ae^{-bt}.
\end{align*}
Now combining the above equations we get that
\begin{align*}
    \big|\kb(\gamma(0), f(\sigma(t+T)))-t-\lambda\big|
    &\leq 2 \max\{A,A_{0}\}e^{-t(\min\{1,b\})}. 
\end{align*}
Therefore, it follows from \eqref{E:ff1} that 
\begin{align*}
   |u-v|-2Ae^{-bu}&\leq \kb(f(\sigma(u)),f(\sigma(v)))\leq |u-v|\quad \forall u,v>s>\max\{0,-T\}. 
\end{align*}
Consequently, for any geodesic ray $\sigma$ in $\bn$ with $\sigma(\infty)=\xi$, there exist  $M_{\sigma}, b'>0$ such that the  map $f\circ \sigma\big|_{[s,\infty)}$ is a $(1,e^{-b's})$ quasi-geodesic in $\bn$ for all $s>M_{\sigma}>0$. Therefore, invoking Theorem~\ref{P:exponetial} we conclude that  there exist $T_{\sigma}\in \mathbb{R}$ and $a, b, M_{\gamma}>0$   such that 
\begin{align*}
\kb(f(\sigma(s)), \sigma(s+T_{\sigma}))<ae^{-bs}\quad \forall s>M_{\gamma}.    
\end{align*}
From the above equation, we obtain that  \begin{align*}
    \lim_{t \to \infty}\inf_{s \geq 0}\kb(f(\sigma(t),\sigma(s)) )=0.
\end{align*}
Now \cite[Proposition~4.10]{AF2024} yields that 
$\lim_{t\to \infty}\kb(f(\sigma(t)),\sigma(t-\ln{\lambda}))=0.$

\end{proof}
\noindent
We conclude this section with an example satisfying the condition of Theorem~\ref{T:c(f)}.

\begin{example}\label{Examp:E1}
   Let $\mathcal{S} = \{ (z, w) \in \mathbb{C}^2 : {\rm Im}(z) > |w|^2 \}$ and $\gamma:[0, \infty) \to \mcs$ defined by  $\gamma(t)=(ie^{t},0)$. Let   $F: \mathcal{S} \to \mathcal{S}$  defined by \[ F(z, w) = \left( \lambda z - \frac{1}{z}, \frac{w}{z+i} \right), \quad {\rm where}\, \lambda>1. \] 

\smallskip
\noindent
{\bf Explanation:} Note that ${\rm Im}(\lambda z-1/z) = {\rm Im}(z)\left(\lambda + \frac{1}{|z|^2}\right)$ and ${\rm Im}(z)>0$  implies $|z+i|^2 > |z|^2.$ Therefore,
\begin{align*}
   {\rm Im}(z)\left(\lambda + \frac{1}{|z|^2}\right)&\geq |w|^2\left(\lambda + \frac{1}{|z|^2}\right)>\frac{|w|^2}{|z+i|^2}.
\end{align*}
Hence, $F(\mcs) \subset \mcs.$ It is clear that $\gamma$ is a geodesic ray in $\mcs$ and $F$ has no fixed point in $\mcs$. Along the geodesic, $F(\gamma(t)) = (i(\lambda e^t + e^{-t}), 0)$. Since the image lies in the totally geodesic slice $w=0$, the Kobayashi distance reduces to the one-dimensional case:
\[ K_{\mcs}((i,0), F(\gamma(t))) = K_{\mathbb{H}}(i, i(\lambda e^{t}+e^{-t}))=\ln(\lambda e^t + e^{-t}) = t + \ln \lambda + \ln\left(1 + \frac{1}{\lambda}e^{-2t}\right). \]
This yields the same quantitative defect:
\[ |K_{\mcs}((i,0), F(\gamma(t))) - t - \ln \lambda| \le \frac{1}{\lambda}e^{-2t}. \]
Thus, the condition is satisfied with $A = 1/\lambda$, and $b = 2$.
It can also be verified by explicit formula of $F$ and the Kobayashi distance   that the hypothesis $(i)$ of the theorem is also satisfied. 
\end{example}
\section{Exponential Convergence: Local and Global Aspects}\label{S:expo local to global}

We begin this section with the following proposition that will be used in the proof of Theorem~\ref{T:Glob to loc}.

\begin{proposition}\label{L:range of geodesic}
 Let $\xi \in \partial \bn$. Then for all 
$\eps>0$ there exists $\delta_{\eps}>0$ such that  if  $p,q \in \bn \cap B(\xi, \delta_{\eps})$, and 
$\sigma_{pq}:[0, \kb(p,q)] \to \bn$ be the  $\bn$ geodesic joining the points $p,q$. Then $\sigma_{pq}([0, \kb(p,q)]) \subset \bn \cap B(\xi, \eps)$.
\end{proposition}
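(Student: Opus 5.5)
The plan is to use the explicit structure of Kobayashi geodesics in $\bn$: every geodesic segment of $(\bn,\kb)$ is contained in a complex geodesic, which for the ball is the intersection of $\bn$ with a complex affine line $L$. On that disc $\bn\cap L$, the restriction of $\kb$ is the Poincar\'e distance (suitably rescaled), so $\sigma_{pq}$ is a Poincar\'e geodesic of the disc $\Delta:=\bn\cap L$, i.e.\ a circular arc orthogonal to $\partial\Delta$ (an arc of a circle in the affine complex line). The statement therefore reduces to a planar statement about hyperbolic geodesics in discs of varying radii, and we must make it uniform in the line $L$.

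\emph{Step 1.} Fix $\eps>0$, and let $p,q\in B(\xi,\delta)$ with $\delta$ to be chosen. Let $L$ be the complex line through $p,q$ (if $p=q$ there is nothing to prove). Then $\Delta=\bn\cap L$ is a Euclidean disc with centre $c$ (the orthogonal projection of $0$ onto $L$) and radius $r=\sqrt{1-|c|^2}$, and $\sigma_{pq}$ is the hyperbolic geodesic of $\Delta$ joining $p$ and $q$.

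\emph{Step 2.} I would prove a uniform planar lemma. If $\Delta$ is a disc and $p,q\in\Delta$, then the hyperbolic geodesic of $\Delta$ from $p$ to $q$ lies in the Euclidean ball centred at $p$ of radius $C\bigl(|p-q|+\max\{\operatorname{dist}(p,\partial\Delta),\operatorname{dist}(q,\partial\Delta)\}\bigr)$, with $C$ an absolute constant. This is scale invariant, so it suffices to take $\Delta=\D$. There the geodesic is an arc of a circle orthogonal to $\partial\D$ passing through $p$ and $q$. For nearby boundary points the arc stays within a bounded multiple of $|p-q|$ plus the depths $1-|p|$ and $1-|q|$. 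I would check this by mapping to the upper half-plane: near a boundary point the Cayley map is bi-Lipschitz, and half-plane geodesics between points of height at most $h$ and horizontal separation $\ell$ stay within distance $\ell+h$ of them. I expect this step to be the main obstacle. The constant must be independent of the disc and of how close the points are to $\partial\Delta$, and the bi-Lipschitz reduction works only in a fixed neighbourhood of the boundary. Points of $\Delta$ far from $\partial\Delta$ relative to $r$ must be handled separately, using the trivial bound by $\operatorname{diam}\Delta=2r$.

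\emph{Step 3.} Combine the two bounds. Since $\xi\in\partial\bn$ and $\partial\Delta\subset\partial\bn$, we have $\operatorname{dist}(p,\partial\Delta)\le |p-\xi'|$ for any $\xi'\in\partial\Delta$. However, $\xi$ may not lie on $L$, so instead I would use $\operatorname{dist}(p,\partial\Delta)\le C'(1-|p|)^{1/2}$, which holds because the slice of a ball at depth $\eta$ has radius of order $\sqrt{\eta}$. Alternatively, one can simply bound $\operatorname{dist}(p,\partial\Delta)\le r$ and note that if $r$ is large then $p$ is deep inside $\bn$ relative to $\Delta$. A cleaner route avoids the depth issue altogether by using convexity: $\sigma_{pq}$ lies in $\Delta$, and any point of the arc is within $C(|p-q|+\operatorname{dist}(p,\partial\Delta))$ of $p$. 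Since $1-|p|<\delta$, the point $p$ is within $\sqrt{2\delta}$ of some point of $\partial\bn\cap L$ (the chord through $p$ in $L$ in the direction away from $c$ has length at most comparable to $\sqrt{1-|p|^2}$). Hence every point of $\sigma_{pq}$ is within $C(2\delta+\sqrt{2\delta})$ of $p$, and therefore within $\delta+C(2\delta+\sqrt{2\delta})$ of $\xi$. Choosing $\delta_\eps$ so that this quantity is less than $\eps$ completes the argument.

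An alternative, if Step 2 proves messy, is an argument by contradiction. Take $p_n,q_n\to\xi$ and points $x_n\in\sigma_{p_nq_n}$ with $|x_n-\xi|\ge\eps$. By the Gromov product estimate, $\langle p_n\mid q_n\rangle_0\to\infty$ (visibility for the ball), while the geodesics $[p_n,q_n]$ pass through the compact set $\overline{\bn}\setminus B(\xi,\eps)$. The boundary case, where $x_n$ tends to a boundary point other than $\xi$, would be excluded by the standard visibility property of $\bn$ together with the Gromov product formula on $\bn$.
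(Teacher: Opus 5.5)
Your proposal is correct, but it takes a more hands-on route than the paper. The paper's proof quotes a Gehring--Hayman inequality for Kobayashi geodesics, $l_E(\sigma_{ab})\le C|a-b|$ for all $a,b\in\bn$, and then estimates $|\sigma_{pq}(t)-\xi|\le l_E(\sigma_{pq})+|p-\xi|\le C|p-q|+\delta_\eps\le(2C+1)\delta_\eps$.

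You instead use the explicit geometry of the ball. You reduce to the slice disc $\Delta=\bn\cap L$, which is a complex geodesic, so $\sigma_{pq}$ is its Poincar\'e geodesic. You then prove a planar estimate with an extra depth term and control that term via $\operatorname{dist}(p,\partial\Delta)=r-|p-c|=(1-|p|^2)/(r+|p-c|)\le\sqrt{1-|p|^2}<\sqrt{2\delta}$, which is right.

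The step you flag as the main obstacle is in fact immediate, and it makes the depth term and the Cayley-map case analysis unnecessary. A Poincar\'e geodesic of a disc is either a diameter or an arc of a circle orthogonal to $\partial\Delta$. The part of such a circle inside $\Delta$ subtends an angle $\theta<\pi$ at its centre. Hence a subarc with chord $|p-q|$ has length at most $\frac{\theta}{2\sin(\theta/2)}|p-q|\le\frac{\pi}{2}|p-q|$. This is exactly the Gehring--Hayman inequality for $\bn$ with $C=\pi/2$, and with it your argument reduces to the paper's, with the explicit choice $\delta_\eps<\eps/(\pi+1)$.

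So your route gives a self-contained proof for the ball with explicit constants. The paper's black-box route gives generality: the cited inequality holds for bounded strongly convex domains with $\smoo^{2,\alpha}$ boundary, as the remark after the proposition notes. There geodesics have no explicit description and your slice reduction is unavailable.

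Your fallback contradiction argument is also sound. For $x\in[p,q]$ one has $\kb(0,x)\ge\langle p\mid q\rangle_0^{\bn}\to\infty$, and visibility applied to the subsegment from $p_n$ to $x_n$ rules out $x_n\to\eta\ne\xi$. It also generalizes, but it gives no explicit $\delta_\eps$.
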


\begin{proof}
 We show that $\forall \eps>0$ there exists $\delta_{\eps}>0$ such that $\forall p, q \in B(e_{1}, \delta_{\eps})$ the image of the $\bn$-geodesic $\sigma_{p,q}$  is contained in $B(e_{1}, \eps) \cap \bn$. Here $e_{1}=(1,0')$

\noindent
Let $\varepsilon>0$ be given. By Gehring-Hayman  inequality (see \cite[Theorem~1]{KNT2024}) we get
$C\geq 1$ such that for all $a,b \in \bn$
\[
l_E(\sigma_{ab}) \leq C|a-b|,
\]
where $\sigma_{ab}$ is the arc-length parametrized $\bn$-geodesic joining $a,b$ and $l_{E}(\sigma_{ab})$ denotes the Euclidean length of the curve $\sigma_{ab}$.
\noindent
Let
\[
0<\delta_{\varepsilon}<\frac{\varepsilon}{2C+1}.
\]

If $p,q\in B(e_{1},\delta_{\varepsilon})\cap \bn$ and
\[
\sigma_{pq}:[0,\kb(p,q)]\to \bn
\]
is the $\bn$-geodesic joining $p,q$, then we have the following for all
$t\in [0,\kb(p,q)]$:

\begin{align*}
|\sigma_{pq}(t)-1|
&\leq
|\sigma_{pq}(t)-p|+|p-1|\leq
l_{\mathrm{Euc}}\bigl(\gamma_{pq}|_{[0,t]}\bigr)+\delta_{\varepsilon}\leq l_{\mathrm{Euc}}(\sigma_{pq})+\delta_{\varepsilon}.\notag\\
\intertext{Now appealing to the Gehring-Hayman inequality we get }
|\sigma_{pq}(t)-1|&\leq C|p-q|+\delta_{\varepsilon}\notag \leq
C\big(|p-1|+|q-1|\big)+\delta_{\eps}
\leq (2C+1)\delta_{\varepsilon}<\eps.
\end{align*}

\smallskip

\noindent
Hence, $\gamma_{pq}(t)\in B(e_1,\varepsilon)\cap \bn
\qquad
\forall\, t\in [0,\kb(p,q)].$
\noindent
\end{proof}
\begin{remark}
 In view of \cite[Theorem~1]{KNT2024}, the proposition remains valid, with the unit ball replaced by any bounded strongly convex domain with $\mathcal{C}^{2,\alpha}$-smooth boundary where $\alpha \in (0,1)$.
  
\end{remark}
Before goinng the the proof lets recall the notion of the geodesic visibility property of a domain. Let $\Omega\Subset\mathbb{C}^{d}$ be a complete hyperbolic domain. We say that
$\Omega$ satisfies the \emph{geodesic visibility property} if, for every pair
of distinct points $\xi,\eta\in\partial\Omega$, there exist neighborhoods
$U_{\xi}$ and $U_{\eta}$ of $\xi$ and $\eta$, respectively, with
$\overline{U_{\xi}}\cap\overline{U_{\eta}}=\varnothing$, and a compact set
$K\Subset\Omega$ such that every Kobayashi geodesic segment joining a point
$x\in U_{\xi}\cap\Omega$ to a point $y\in U_{\eta}\cap\Omega$ intersects $K$. For the convenience of the reader, we recall the theorem stated in the Introduction and provide its proof in this section.

\begin{theorem}\label{T:local to global}
 Let $\Omega \subset \mathbb{C}^{d}$ be a bounded, complete hyperbolic domain satisfying the geodesic visibility property. Assume that, for every $\xi \in \partial \Omega$, there exists an open neighborhood $U_{\xi}$ of $\xi$ such that $\Omega' := \Omega \cap U_{\xi}$ is connected and has the following property: for any two maps $\gamma_{1},\gamma_{2}:[0,\infty)\to\Omega'$ and some $a,b>0$, \[ \gamma_j\big|_{[s,t]} \text{ is a }(1,ae^{-bs})\text{-quasi-geodesic for every }0\le s<t, \quad j=1,2, \] implies that there exist $T_{0}\in\mathbb{R}$ and $A(\gamma_{1}(0),\gamma_{2}(0),a),B(b)>0$ such that \[ K_{\Omega'} \bigl(\gamma_{1}(t),\gamma_{2}(t+T_{0})\bigr) \leq A e^{-Bt} \] for all sufficiently large $t$. Then, for any two geodesic rays \[ \sigma_{1},\sigma_{2}:[0,\infty)\to\Omega \] satisfying $\sigma_{1}(\infty)=\sigma_{2}(\infty),$ there exist $T\in\mathbb{R}$ and $B'>0$ such that \[ \limsup_{t\to\infty} \frac{\log K_{\Omega} \bigl(\sigma_{1}(t),\sigma_{2}(t+T)\bigr)}{t} \leq -B'. \]
 
\end{theorem}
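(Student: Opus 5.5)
Let $\xi:=\sigma_1(\infty)=\sigma_2(\infty)\in\partial\Om$ and fix the neighbourhood $U_\xi$ and $\Om':=\Om\cap U_\xi$ from the hypothesis. The plan is to show that sufficiently far tails of $\sigma_1,\sigma_2$ are $(1,ae^{-bs})$-quasi-geodesics for $K_{\Om'}$, apply the local hypothesis in $\Om'$, and transfer the estimate back to $\Om$ via $K_\Om\le K_{\Om'}$. This last inequality holds because $\Om'\subset\Om$, and it makes the final transfer trivial. The real work is the comparison in the other direction along the rays.

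\emph{Step 1: the tails stay in $\Om'$ and far from $\Om\setminus U_\xi$.} Choose a smaller neighbourhood $W\Subset U_\xi$ of $\xi$. Since $\Om$ satisfies the geodesic visibility property and is complete hyperbolic, one gets the standard consequence $K_\Om(z;\Om\setminus U_\xi)\to\infty$ as $z\to\xi$ along geodesics landing at $\xi$. More precisely, I would aim to show that
\[
K_\Om\bigl(\sigma_j(t);\Om\setminus U_\xi\bigr)\ge t-C_j
\]
for large $t$. The idea is to use visibility to see that a point $w\in\Om\setminus U_\xi$ realizing a short distance would force the geodesic $[\sigma_j(t),w]$ to pass through a fixed compact $K$. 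Hence $K_\Om(\sigma_j(t),w)\ge K_\Om(\sigma_j(t),K)\ge t-\mathrm{diam}$-type constants. I expect this to be the main obstacle. Visibility gives a compact set only for pairs of distinct boundary points, so one must cover $\partial\Om\setminus U_\xi$ by finitely many such neighbourhoods and handle interior points of $\Om\setminus U_\xi$ by compactness, which requires some care. Re-basing $\sigma_j$ at a large time $t_0$, we may also assume that $\sigma_j([0,\infty))\subset\Om'$.

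\emph{Step 2: Royden localization.} By Result~\ref{R:Roydenestimate} and Step 1, for $z=\sigma_j(u)$ with $u\ge s$,
\[
\kappa_{\Om'}(z;v)\le\coth(u-C)\,\kappa_\Om(z;v)\le\bigl(1+3e^{-2(u-C)}\bigr)\kappa_\Om(z;v).
\]
Integrating along $\sigma_j$ from $u$ to $v$ gives
\[
K_{\Om'}(\sigma_j(u),\sigma_j(v))\le |u-v|+c\,e^{-2s}.
\]
Conversely,
\[
K_{\Om'}(\sigma_j(u),\sigma_j(v))\ge K_\Om(\sigma_j(u),\sigma_j(v))=|u-v|.
\]
This step needs a Lipschitz (or absolutely continuous) parametrization so that the Kobayashi length of $\sigma_j$ is computed by integrating $\kappa_\Om$, which is standard for geodesics. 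Hence $\sigma_j|_{[s,t]}$ is a $(1,ae^{-2s})$-quasi-geodesic in $(\Om',K_{\Om'})$ for all $0\le s<t$.

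\emph{Step 3: conclusion.} The hypothesis on $\Om'$ now yields $T_0$ and constants $A,B>0$ with
\[
K_{\Om'}\bigl(\sigma_1(t),\sigma_2(t+T_0)\bigr)\le Ae^{-Bt}
\]
for large $t$. This bound is in the re-based parameters; undoing the shift by $t_0$ only changes $T_0$ and $A$. Since $K_\Om\le K_{\Om'}$ on $\Om'$, we obtain
\[
\limsup_{t\to\infty}\frac{1}{t}\log K_\Om\bigl(\sigma_1(t),\sigma_2(t+T)\bigr)\le -B
\]
with $T=T_0$ adjusted and $B'=B$.
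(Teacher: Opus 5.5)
Your proposal is correct and follows essentially the same route as the paper. Both arguments get linear growth of $K_\Omega(\sigma_j(t);\Omega\setminus U_\xi)$ from visibility and a finite cover, integrate Royden's localization along the rays to make the tails $(1,ae^{-bs})$-quasi-geodesics in $(\Omega',K_{\Omega'})$, and conclude with the hypothesis and $K_\Omega\le K_{\Omega'}$. Your Step~1 (covering all of $\partial\Omega\setminus U_\xi$ and treating the rest of $\Omega\setminus U_\xi$ as a compact subset of $\Omega$) is slightly cleaner than the paper's case split on the minimizing points $x_t$, and your bound $\coth x\le 1+3e^{-2x}$ is sharper than the paper's $1+4e^{-x}$, though either suffices.
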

\begin{proof}
Let $\sigma_j:[0,\infty)\longrightarrow \Omega, ~~~ j=1,2,$ be two geodesic rays such that $\sigma_1(\infty)=\sigma_2(\infty)=:\xi\in\partial\Omega.$ Let $U_{\xi}$ be the neighborhood of $\xi$ given by the hypothesis. Clearly, there exists $t_{0}>0$ such that $\sigma_{j}([t_{0}, \infty)) \st U_{\xi}$ for $j=1,2$.  We shall show that each $\sigma_j$, when regarded
as a curve in $\Omega'$, is a $(1,ae^{-bs})$-quasi-geodesic for suitable
constants $a,b>0$.  We first prove that there exists a constant $\lambda>0$ such that
such that
\begin{equation*}\label{eq:distance-complement}
 K_{\Omega}\bigl(\sigma_j(t),\Omega\setminus U_{\xi}\bigr)
 \geq t-\lambda \qquad \forall t>t_{0} \qquad j=1,2.
\end{equation*}
We prove this for $\sigma_1$; the argument for $\sigma_2$ is identical. Since $\Omega$ is complete hyperbolic, for every  $t>t_0$ there exists $x_{t}\in \partial U_{\xi} \cap \Om$  such that \begin{equation*}
K_{\Omega}\bigl(\sigma_1(t),\Omega\setminus U_{\xi}\bigr) = K_{\Omega}\bigl(\sigma_1(t),x_{t}\bigr). \end{equation*}

\smallskip

\noindent
We now claim the following:

\smallskip

\noindent
{\bf Claim: } There exists $\lambda>0$ independent of $t$ such that we can choose 
 $t_{0}>0$ be large enough such that 
\begin{equation}\label{eq:minimizer}
    K_{\Omega}\bigl(\sigma_1(t),x_t\bigr) \geq t-\lambda \quad \forall t\geq t_{0}.
\end{equation}
{\bf Proof of the claim:} Suppose first that there exist  a compact set $L\Subset\Omega$ such that $ x_t\in L $ for every $t\geq t_0$. Then, by the reverse triangle inequality, we conclude the following holds for all $t>t_{0}$.
\begin{align*} K_{\Omega}\bigl(\sigma_1(t),x_t\bigr) &\geq K_{\Omega}\bigl(\sigma_1(0),\sigma_1(t)\bigr) - K_{\Omega}\bigl(\sigma_1(0),x_t\bigr)\geq t-\lambda,
\end{align*} 
where $\lambda=\sup_{x\in L}K_{\Omega}\bigl(\sigma_1(0),x \bigr).$

\medskip

\noindent
It remains to consider the case where the minimizing points admits a subsequence which leaves every compact subset of $\Omega$. 
Since  $x_{t} \in \partial\Om \cap  \partial U_{\xi}$, and $\Omega$ is bounded,  we may assume that there exists a subsequence  $x_{t_{\nu}}$ such that  \[ x_{t_\nu}\to\eta \in\partial\Omega\cap\partial U_{\xi}.\] 
In particular, $\eta\neq\xi$.   
For each $\nu$, let \[ \alpha_{\nu}: [0,K_{\Omega}(\sigma_1(t_\nu),x_{t_\nu})] \longrightarrow\Omega \] be a $\Om$-geodesic segment joining $\sigma_1(t_\nu)$ to $x_{t_\nu}$.
It follows from the geodesic visibility property that, for every
$\eta\in\partial\Omega\cap\partial U_{\xi},$
there exists neighborhoods $U_{\eta}$ of $\eta$ and $U_{\xi}^{\eta}$ of
$\xi$, and a compact set $K_{\eta}\Subset\Omega$, such that every
$\Omega$-geodesic joining a point $x\in U_{\eta}\cap\Omega$ to a point  $y\in U_{\xi}^{\eta}\cap\Omega$
intersects $K_{\eta}$. Since $\partial\Omega\cap\partial U_{\xi}$ is compact, there exist finitely
many points
\[
\eta_1,\ldots,\eta_m\in\partial\Omega\cap\partial U_{\xi}
\]
such that
\[
\partial\Omega\cap\partial U_{\xi}
\subset
\bigcup_{\ell=1}^{m}U_{\eta_\ell}.
\]
Let
\[
K:=\bigcup_{\ell=1}^{m}K_{\eta_\ell}.
\]
Then $K\Subset\Omega$ is compact. Moreover, since the intersection
\[
V_{\xi}:=\bigcap_{\ell=1}^{m}U_{\xi}^{\eta_\ell}
\]
is a neighborhood of $\xi$, every $\Omega$-geodesic joining a point in
$V_{\xi}\cap\Omega$ to a point in $\bigcup_{\ell=1}^{m}(U_{\eta_\ell}\cap\Omega)$
 intersects the fixed compact set $K$. Since $\sigma_1(t)\to\xi$ as $t\to\infty$, we can assume that $t_{0}>0$ is large enough such that
 \[
\sigma_1(t)\in V_{\xi}\quad \forall t\geq t_{0}.
\]
Thus, whenever $x_t$ is sufficiently close
to $\partial\Omega\cap\partial U_{\xi}$, every $\Omega$-geodesic joining
$\sigma_1(t)$ to $x_t$ intersects the fixed compact set $K$. Therefore,

 \[ \alpha_{\nu}([0,K_{\Omega}(\sigma_1(t_\nu),x_{t_\nu})])\cap K\neq\varnothing, \] for all sufficiently large $\nu$. Choose \[ y_{\nu}\in \alpha_{\nu}\big([0,K_{\Omega}(\sigma_1(t_\nu),x_{t_\nu})]\big)\cap K. \] Since $y_{\nu}$ lies on the geodesic segment joining $\sigma_1(t_\nu)$ to $x_{t_\nu}$, we have \[ K_{\Omega}\bigl(\sigma_1(t_\nu),x_{t_\nu}\bigr) \geq K_{\Omega}\bigl(\sigma_1(t_\nu),y_{\nu}\bigr). \] Again, by the  triangle inequality, \begin{align*} K_{\Omega}\bigl(\sigma_1(t_\nu),y_{\nu}\bigr) &\geq K_{\Omega}\bigl(\sigma_1(0),\sigma_1(t_\nu)\bigr) - K_{\Omega}\bigl(\sigma_1(0),y_{\nu}\bigr)\geq t_\nu- \lambda_{K}, \end{align*}
where $\lambda_K:= \sup_{y\in K}K_{\Omega}\bigl(\sigma_1(0),y\bigr)$. Therefore, combining the above two relations we get
 \[ K_{\Omega}\bigl(\sigma_1(t_\nu),x_{t_\nu}\bigr) \geq t_\nu-\lambda_K. \] Thus, in either case, there exists a constant $\lambda>0$ such that \[ K_{\Omega}\bigl(\sigma_1(t),x_t\bigr) \geq t-\lambda \quad \forall t\geq t_{0}.\]  
In view of \eqref{eq:minimizer}, this gives \[ K_{\Omega}\bigl(\sigma_1(t),\Omega\setminus U_{\xi}\bigr) \geq t-\lambda  \quad \forall t\geq t_{0}. \] The same argument applies to $\sigma_2$, and hence, after increasing $\lambda$ if necessary, \[ K_{\Omega}\bigl(\sigma_j(t),\Omega\setminus\Omega'\bigr) \geq t-\lambda, \qquad j=1,2, \] for all $t\geq t_{0}$. This completes the proof of the claim. \hfill $\blacktriangleleft$

\smallskip

\noindent
 To this end let $j=1,2$ and  apply the Result~\ref{R:Roydenestimate} to obtain 
\[ \kappa_{\Omega'} \bigl(\sigma_j(t);\sigma_j'(t)\bigr) \leq \bigg(1+4e^{-K_{\Om}\big(\sigma_{j}(t); \Om\setminus U_{\xi}\big)}\bigg)\, \kappa_{\Omega} \bigl(\sigma_j(t);\sigma_j'(t)\bigr). \] Since $\sigma_j$ is parametrized by Kobayashi arc length, $\kappa_{\Omega} \bigl(\sigma_j(t);\sigma_j'(t)\bigr)=1 ~~{\rm a.e.} ~t \in [s,t_{2}]$ for any $t_{0}\leq s\leq t_{2}<\infty$. Therefore, assuming $t_{0}>0$ sufficiently large we conclude that there exists $a>0$ such that  
\begin{align*}
  \kappa_{\Omega'} \bigl(\sigma_j(t);\sigma_j'(t)\bigr) \leq 1+ae^{-t} \quad {\rm a.e.}~~ t \in [s, t_{2}]. 
\end{align*}
\noindent 
Now integrating the above equation from  $s$ to $t_{2}$ we get
 \begin{align*}
  \int_{s}^{t_{2}}\kappa_{\Omega'} \bigl(\sigma_j(t);\sigma_j'(t)\bigr) \leq \int_{s}^{t_{2}}1\,dt+a\int_{s}^{t_{2}}e^{-t} \,dt\notag\\
  \impl |s-t_{2}|=K_{\Om}(\sigma_{j}(s), \sigma_{j}(t_{2})) \leq  K_{\Om'}(\sigma_{j}(s), \sigma_{j}(t_{2}))\leq |t_{2}-s|+a e^{-s}.
 \end{align*}
Now  it follows from the  hypothesis  that there exists $T \in\re$ and $A, B>0$ such that 
$$K_{\Om'}(\sigma_{1}(t), \sigma_{2}(t+T))\leq Ae^{-Bt} \quad \forall t>\max\{t_{0}-T,0\}. $$ 

\smallskip
\noindent
Since $K_{\Om}(\sigma_{1}(t), \sigma_{2}(t+T))\leq K_{\Om'}(\sigma_{1}(t), \sigma_{2}(t+T))$, hence, we are done.

\end{proof}
\begin{theorem}\label{T:Glob to loc}
Let $\xi\in\partial\bn$ and let $U_{\xi}$ be an open neighbourhood of $\xi$ such that $B':=\bn\cap U_{\xi}$ is connected. Let $\gamma_1,\gamma_2:[0,\infty)\longrightarrow B'$ be two geodesic rays satisfying $\gamma_1(\infty)=\gamma_2(\infty)=\xi.$ Then there exist constants $a,b>0$ and $T\in\re$ such that
\begin{equation*}\label{eq:main-exp}
K_{B'}\bigl(\gamma_1(t),\gamma_2(t+T)\bigr)
    \le a e^{-bt}
    \qquad t> \max\{0, -T\}.
\end{equation*}
\end{theorem}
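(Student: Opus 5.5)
The plan is to run the argument of Theorem~\ref{T:local to global} in reverse: view the $B'$-geodesics $\gamma_1,\gamma_2$ as quasi-geodesics of the ambient ball, apply Theorem~\ref{P:exponetial} in $\bn$, and then transfer the resulting $\kb$-estimate back to $K_{B'}$ using Royden's localization (Result~\ref{R:Roydenestimate}).

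\textbf{Step 1: the rays are good quasi-geodesics in $\bn$.} Fix $r>0$ with $B(\xi,r)\cap\bn\subset U_\xi$. Since $\gamma_j(t)\to\xi$, for large $t$ the point $\gamma_j(t)$ is far, in $\kb$, from $\bn\setminus U_\xi$. The explicit form of $\kb$ (or Result~\ref{R:nikolov-thomas}) gives
\[
\kb\bigl(z,\bn\setminus U_\xi\bigr)\ge \tfrac12\log\frac{1}{\partial_{\bn}(z)}-C
\]
for $z$ near $\xi$. Hence
\[
\kappa_{B'}(z;v)\le \bigl(1+c\,\partial_{\bn}(z)\bigr)\kappa_{\bn}(z;v),
\]
because $\coth x-1\approx 2e^{-2x}$. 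I then need a lower bound $\partial_{\bn}(\gamma_j(t))\lesssim e^{-\beta t}$. Since $K_{B'}\ge\kb$, the basepoint estimate $K_{B'}(\gamma_j(0),\gamma_j(t))=t$ combined with the standard upper bound
\[
K_{B'}(p,z)\le \tfrac12\log\frac{1}{\partial_{\bn}(z)}+C
\]
(valid near $\xi$: take the localization of the ball estimate along a radial segment inside $U_\xi$) yields $\partial_{\bn}(\gamma_j(t))\le Ce^{-2t}$. Integrating the metric comparison along $\gamma_j|_{[s,t]}$, whose $\kappa_{B'}$-speed is $1$ a.e., gives
\[
t-s-ae^{-2s}\le \kb(\gamma_j(s),\gamma_j(t))\le K_{B'}(\gamma_j(s),\gamma_j(t))=t-s.
\]
So each $\gamma_j|_{[s,\infty)}$ is a $(1,ae^{-bs})$ quasi-geodesic in $\bn$.

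\textbf{Step 2: apply the ball result.} Theorem~\ref{P:exponetial} now gives $T_0$ and $\alpha>0$ with
\[
\kb(\gamma_1(t),\gamma_2(t+T_0))\le \alpha e^{-ct}.
\]
Set $T=T_0$.

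\textbf{Step 3: transfer back to $B'$.} Let $\sigma_t$ be the $\bn$-geodesic from $\gamma_1(t)$ to $\gamma_2(t+T)$. Its endpoints are close to $\xi$ and at Euclidean distance $\lesssim e^{-ct}$ apart by Gehring--Hayman, so Proposition~\ref{L:range of geodesic} places $\sigma_t$ inside $B(\xi,\varepsilon)\cap\bn\subset U_\xi$. I also need $\sigma_t$ to stay uniformly far from $\bn\setminus U_\xi$. Every point of $\sigma_t$ is within $\kb$-distance $\alpha e^{-ct}$ of $\gamma_1(t)$, so its distance to the complement is at least roughly $t-C$. Royden's estimate then gives
\[
K_{B'}(\gamma_1(t),\gamma_2(t+T))\le \coth(t-C)\,\kb(\gamma_1(t),\gamma_2(t+T))\le 2\alpha e^{-ct}
\]
for large $t$. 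Finally, enlarge $a$ so that the bound also holds on the bounded interval of smaller $t>\max\{0,-T\}$.

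The step I expect to be the main obstacle is the quantitative control in Step 1. It requires showing that $\kb(\gamma_j(t),\bn\setminus U_\xi)\ge t-C$ with a constant uniform in $t$, that is, a two-sided comparison of $K_{B'}$ and $\kb$ near $\xi$. My first approach is the explicit distance-to-boundary estimates in the ball, together with the observation that for $z$ near $\xi$ the complement $\bn\setminus U_\xi$ lies at bounded distance from a fixed compact set. If that fails, I would fall back on the visibility-type argument from the proof of Theorem~\ref{T:local to global}, which applies here because $\bn$ has geodesic visibility.
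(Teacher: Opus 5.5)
There is a genuine gap in Step~1, at the claim that integrating the metric comparison along $\gamma_j|_{[s,t]}$ gives $t-s-ae^{-2s}\le \kb(\gamma_j(s),\gamma_j(t))$. Integrating $\kappa_{B'}\le (1+c\,\partial_{\bn})\kappa_{\bn}$ along $\gamma_j$ only yields
\[
t-s=\int_s^t \kappa_{B'}\bigl(\gamma_j(\tau);\gamma_j'(\tau)\bigr)\,d\tau\le \int_s^t \kappa_{\bn}\bigl(\gamma_j(\tau);\gamma_j'(\tau)\bigr)\,d\tau + a e^{-2s}.
\]
This is a lower bound for the $\kb$-\emph{length} of $\gamma_j|_{[s,t]}$. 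Since $\gamma_j$ is not a $\kb$-geodesic, that length is only an \emph{upper} bound for $\kb(\gamma_j(s),\gamma_j(t))$, so nothing about the distance follows. To bound $\kb(\gamma_j(s),\gamma_j(t))$ from below, you must integrate Royden's estimate along the $\bn$-geodesic $\sigma_{s,t}$ from $\gamma_j(s)$ to $\gamma_j(t)$:
\[
t-s=K_{B'}\bigl(\gamma_j(s),\gamma_j(t)\bigr)\le \kb\bigl(\gamma_j(s),\gamma_j(t)\bigr)+4\int_0^{L}e^{-2\kb(\sigma_{s,t}(\tau);\,\bn\setminus U_\xi)}\,d\tau .
\]
This requires $\sigma_{s,t}\subset B'$ and $\kb(\sigma_{s,t}(\tau);\bn\setminus U_\xi)\ge s+\tau-\widetilde R$ along the whole ambient geodesic. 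That is the real obstacle. The bound you flagged, at the points $\gamma_j(t)$ themselves, is already a consequence of your estimate $\partial_{\bn}(\gamma_j(t))\le Ce^{-2t}$.

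Supplying this control is exactly what the paper's proof does. It first shows $\gamma_j$ is a $(1,\tfrac12)$-quasi-geodesic of $\bn$, using the localization $K_{\bn\cap B(\xi,\eps)}-\kb<\tfrac12$. It then keeps $\sigma_{s,t}$ inside $B(\xi,\eps)$ via Proposition~\ref{L:range of geodesic}. The Morse lemma places $\sigma_{s,t}(\tau)$ within $R_1$ of $\gamma_j(a_j(\tau))$ with $a_j(\tau)\ge s+\tau-R_1$, and comparison with the radial ray $\eta$ gives the distance-to-complement bound. Only then does it integrate Royden along $\sigma_{s,t}$.

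Your own ingredients can feed this repair. From $\partial_{\bn}(\gamma_j(t))\le Ce^{-2t}$ and $\kb(\gamma_j(0),\gamma_j(t))\le t$ you get $|\kb(0,\gamma_j(t))-t|\le C$, hence $\kb(\gamma_j(s),\gamma_j(t))\ge t-s-2C$. That $(1,2C)$ bound makes the Morse lemma available without the localization result. It also gives $\kb(\sigma_{s,t}(\tau);\bn\setminus U_\xi)\ge a_j(\tau)-C'-R_1$ directly from your explicit ball estimate.

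With Step~1 repaired this way, Steps~2 and~3 are sound. Transferring back via Royden along $\sigma_t$, all of whose points lie within $\alpha e^{-ct}$ of $\gamma_1(t)$, is a legitimate substitute for the paper's appeal to Venturini's localization.
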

\begin{proof}
Let
\[
    \gamma_1,\gamma_2:[0,\infty)\longrightarrow B'
\]
be two geodesic rays with common endpoint $\xi$.
Choose $\varepsilon>0$ so small that
$B(\xi,\varepsilon)\cap\bn\subset B'.$ By localization of the Kobayashi distance (see \cite[Proposition~6.5]{BNT2022}), after decreasing $\varepsilon$
if necessary, we may assume that
\begin{equation}\label{eq:distance-localization-half}
    K_{\bn\cap B(\xi,\varepsilon)}(z,w)-K_{\bn}(z,w)<\frac12,
    \qquad z,w\in B(\xi,\delta_{\varepsilon})\cap\bn.
\end{equation}
We infer from   Lemma ~\ref{L:range of geodesic}, there exists
$\delta_{\varepsilon}>0$ such that, whenever
$p,q\in B(\xi,\delta_{\varepsilon})\cap\bn$, the image of the
$\bn$-geodesic segment $\sigma_{pq}$ joining $p$ and $q$ is contained in
$B(\xi,\varepsilon)\cap\bn$.
Since $\gamma_j(\infty)=\xi$, there exists $M_{\varepsilon}>0$ such that
\begin{equation*}\label{eq:eventually-small-ball}
    \gamma_j(t)\in B(\xi,\delta_{\varepsilon})\cap\bn,
    \qquad t>M_{\varepsilon},\quad j\in\{1,2\}.
\end{equation*}
Because each $\gamma_j$ is a $B'$-geodesic, for $s,t>M_{\varepsilon}$,
\begin{equation*}\label{eq:bp-geodesic}
    |t-s|=K_{B'}\bigl(\gamma_j(t),\gamma_j(s)\bigr)
    \ge K_{\bn}\bigl(\gamma_j(t),\gamma_j(s)\bigr).
\end{equation*}
\noindent
Combining the above relation and  \eqref{eq:distance-localization-half} we obtain
\begin{align}
K_{B'}\bigl(\gamma_j(t),\gamma_j(s)\bigr)-\frac12\leq K_{\bn\cap B(\xi,\varepsilon)}\bigl(\gamma_j(t),\gamma_j(s)\bigr)-\frac12
    &\le K_{\bn}\bigl(\gamma_j(t),\gamma_j(s)\bigr)\leq |t-s| \quad \forall s, t>M_{\eps}
\end{align}
\noindent
Thus for each $j \in \{1,2\}$, 
\begin{equation}\label{eq:first-quasigeodesic}
    |t-s|-\frac12
    \le K_{\bn}\bigl(\gamma_j(s),\gamma_j(t)\bigr)
    \le |t-s|,
    \qquad s,t>M_{\varepsilon}.
\end{equation}
\noindent
Hence, viewed inside $\bn$, each $\gamma_j$ is a $(1,\tfrac12)$-quasi-geodesic
on $[M_{\varepsilon},\infty)$. 

\medskip

\noindent
Fix $s,t>M_{\varepsilon}$. For $j\in\{1,2\}$, let
\begin{equation*}\label{eq:sigma-def}
    \sigma_j:[0,L_j]\longrightarrow\bn,
    \qquad
    L_j:=K_{\bn}\bigl(\gamma_j(s),\gamma_j(t)\bigr),
\end{equation*}
be the unit-speed $\bn$-geodesic joining $\gamma_j(s)$ to $\gamma_j(t)$.
By the choice of $\delta_{\varepsilon}$,
\begin{equation*}\label{eq:sigma-local-ball}
    \sigma_j([0,L_j])\subset B(\xi,\varepsilon)\cap\bn.
\end{equation*}
\noindent
By the Morse lemma (Result~\ref{R:Morselemma}) applied to the hyperbolic metric space $(\bn,K_{\bn})$,
there exists $R_1>0$, depending only on the hyperbolicity constant and the
quasi-geodesic constants, such that for every $\tau\in[0,L_j]$ there is a
point $a_j(\tau)\in[s,t]$ satisfying
\begin{equation}\label{eq:morse-close}
K_{\bn}\bigl(\gamma_j(a_j(\tau)),\sigma_j(\tau)\bigr)<R_1.
\end{equation}
\noindent
Note that $\sigma_j(0)=\gamma_j(s)$. Therefore, it turns out from \eqref{eq:morse-close} and \eqref{eq:first-quasigeodesic} that 
\begin{align}
    \tau &=K_{\bn}\bigl(\sigma_j(0),\sigma_j(\tau)\bigr)\le
      K_{\bn}\bigl(\gamma_j(s),\gamma_j(a_j(\tau))\bigr)
      +K_{\bn}\bigl(\gamma_j(a_j(\tau)),\sigma_j(\tau)\bigr) \le a_j(\tau)-s+R_1.
\end{align}
Therefore,
\begin{equation}\label{eq:aj-lower}
    a_j(\tau)\ge \tau+s-R_1.
\end{equation}
\noindent
It follows from the definition of distance to a set and
\eqref{eq:morse-close} that
\begin{equation}\label{eq:sigma-complement-reduction}
K_{\bn}\bigl(\sigma_j(\tau);\bn\setminus B(\xi, \eps)\bigr)
    \ge
K_{\bn}\bigl(\gamma_j(a_j(\tau));\bn\setminus B(\xi, \eps)\bigr)-R_1.
\end{equation}

\smallskip

\noindent
We next estimate the term on the right-hand side of the above equation. Let
\[
    \eta:[0,\infty)\longrightarrow\bn
\]
be the $\bn$-geodesic ray with $\eta(0)=0$ and    $\eta(\infty)=\xi$. 
Again, by applying the Morse lemma (Result~\ref{R:Morselemma}), we conclude that there exists a constant $C_1>0$ such that, for every $t\ge M_{\varepsilon}$, there exists $b_j(t)>0$ satisfying
\begin{equation}\label{eq:gamma-eta-close}
    K_{\bn}\bigl(\eta(b_j(t)),\gamma_j(t)\bigr)<C_1
    \qquad t\ge M_{\varepsilon}.
\end{equation}
\noindent
We now observe that  \eqref{eq:gamma-eta-close}, \eqref{eq:first-quasigeodesic} yields
\begin{align}\label{E:mm11}
    b_j(t)
&=K_{\bn}\bigl(\eta(0),\eta(b_j(t))\bigr) 
\le  K_{\bn}\bigl(\eta(0),\gamma_j(0)\bigr)+K_{\bn}\bigl(\gamma_j(0),\gamma_j(t)\bigr)+K_{\bn}\bigr(\gamma_j(t),\eta(b_j(t))\bigr) \notag\\
&\leq C_{1}+C+t,
    \end{align}
 where $C=\max\{K_{\bn}\bigl(\eta(0),\gamma_j(0)\bigr):j=1,2\}$. Similarly,  from \eqref{eq:first-quasigeodesic} and triangle inequality we get
\begin{align}\label{E:mm12}
    b_j(t)
    &\ge K_{\bn}\bigl(\eta(0),\gamma_j(t)\bigr)-C_1 \ge -K_{\bn}\bigl(\eta(0),\gamma_j(0)\bigr)
       +K_{\bn}\bigl(\gamma_j(0),\gamma_j(t)\bigr)-C_1 \notag\\
    &\ge t-\frac12-C_1- C.
\end{align}
Consequently, from \eqref{E:mm11} and \eqref{E:mm12}, it follows that  there exists $C_0>0$ such that for each $j=1,2$
\begin{equation}\label{eq:bj-time}
    |b_j(t)-t|\le C_0\quad \forall t>M_{\eps}.
\end{equation}

\smallskip

\noindent
Proceeding similar way as in the proof of  the  Theorem~\ref{T:local to global} (similar to the proof of the claim), we conclude that we can choose 
 $M_{\varepsilon}>0$ large enough  and $\lambda_0>0$ such that
\begin{equation}\label{eq:eta-complement}
K_{\bn}\bigl(\eta(u),\bn\setminus B(\xi, \eps)\bigr)
    \ge u-\lambda_0,
    \qquad u\ge M_{\varepsilon}.
\end{equation}
Using \eqref{eq:gamma-eta-close}, \eqref{eq:bj-time}, and
\eqref{eq:eta-complement}, we conclude there exists $M'_{\eps}>0$ such that 

\medskip

\begin{align}
K_{\bn}\bigl(\gamma_j(t);\bn\setminus  B(\xi, \eps)\bigr)
    &\ge
K_{\bn}\bigl(\eta(b_j(t));\bn\setminus B(\xi, \eps)\bigr)-C_1 
    \ge b_j(t)-\lambda_0-C_1 \ge t-\lambda'\quad \forall t>M'_{\eps},
\end{align}
where $\lambda'=C_{0}+\lambda_{0}+C_{1}$. From now we  choose $s,t>N_{\eps}:=\max\{M_{\eps}, M'_{\eps}\}$. Therefore, combining the above relation with \eqref{eq:aj-lower} and
\eqref{eq:sigma-complement-reduction}, we conclude that  there is a constant $\widetilde R>0$
such that
\begin{equation}\label{eq:sigma-far-complement}
    K_{\bn}\bigl(\sigma_j(\tau);\bn\setminus B(\xi, \eps)\bigr)
    \ge \tau+s-\widetilde R,
    \qquad 0\le \tau\le L_j,
\end{equation}

\noindent
We now apply Result~\ref{R:Roydenestimate} on the curve $\sigma_{j}$ to deduce the following:
\begin{equation}\label{eq:inf-localization}
    \kappa_{\bn\cap B(\xi,\varepsilon)}
      \bigl(\sigma_j(\tau);\sigma_j'(\tau)\bigr)
     \le
    \coth\!\left(
      K_{\bn}\bigl(\sigma_j(\tau);\bn\setminus  B(\xi,\varepsilon)\bigr)\kappa_{\bn}
      \bigl(\sigma_j(\tau);\sigma_j'(\tau)\bigr)
    \right).
\end{equation}
Here $\sigma_{j}$ is arc-length parametrized geodesic joining two points. Hence, $\kappa_{\bn}\big(\sigma_j(\tau);\sigma_j'(\tau)\big)=1$. From \eqref{eq:sigma-far-complement} and the elementary estimate
$\coth x\le 1+4e^{-2x}$ imply, there exists constant $A>0$, such that for large $s$,
\begin{equation}\label{eq:kappa-ratio-exp}
     \kappa_{\bn\cap B(\xi,\varepsilon)}
      \bigl(\sigma_j(\tau);\sigma_j'(\tau)\bigr)
     \le 1+A e^{-s}e^{-\tau}.
\end{equation}
\noindent
Hence, integrating the above equation with respect to $\tau$ from $0$ to $L_{j}$ we get 
\begin{align*}
  K_{\bn \cap B(\xi,\varepsilon)}(\gamma_{j}(s),\gamma_{j}(t)) \le L_j+A e^{-s}\int_0^{L_j}e^{-\tau}\,d\tau 
    =L_j+A e^{-s}(1-e^{-L_j})
    \le L_j+A e^{-s}.
\end{align*}
Note the the Kobayashi distance is non expanding under holomorphic maps. Consequently, we obtain that   \begin{equation}\label{eq:distance-upper-local}
  |s-t|- A e^{-s}= K_{B'}\bigl(\gamma_j(s),\gamma_j(t)\bigr)- A e^{-s}
    \le
    K_{\bn}\bigl(\gamma_j(s),\gamma_j(t)\bigr)\leq |s-t|, \quad \forall s,t \geq N_{\eps}.
\end{equation}
We now invoke Theorem~\ref{P:exponetial} to conclude that there exist $T_0\in\re$ and
$A_0>0$ such that
\begin{equation}\label{eq:ball-exp-convergence}
    K_{\bn}\bigl(\gamma_1(t),\gamma_2(t+T_0)\bigr)
    \le A_0 e^{-t/2}\quad \forall t>\max\{0,-T_{0}\}.
\end{equation}
Finally, assuming $\eps>0$ small enough, and $N_{\eps0>0}$ sufficiently large and applying the localization estimate on $\bn \cap B(\xi, \eps)$ (see \cite[Proposition~3]{Venturini1989}) we get 
\begin{equation}\label{eq:final-distance-ratio}
K_{\bn \cap B(\xi, \eps)}\bigl(\gamma_1(t),\gamma_2(t+T_0)\bigr)
\le \frac32 K_{\bn}\bigl(\gamma_1(t),\gamma_2(t+T_0)\bigr)\quad \forall t>N_{\eps}.
\end{equation}
\noindent
Now the relation $K_{B'}(z,w)\leq K_{\bn \cap B(\xi, \eps)}(z,w)$ together with \eqref{eq:ball-exp-convergence} completes the proof.

\end{proof}

\section{Appendix}\label{S:appendix}
Here we prove the following lemma.
\begin{lemma}\label{L:AppenLemma}
    Let $\triangle \bar{P}\bar{Q}\bar{R}$ and $\bar{\sigma}^{n}_{s}:[0, \rho_{\D}(\bar{P}, \bar{R})] \to \D$, be as Claim~\ref{claim:squaregromov}. Then there exists $A_{1}>0$ such 
    $$ \rho_{\D}\big(\bar{Q},\bar{\sigma}^{n}_{s}(a^{n}(s))\big)
\leq A_{1}
\sqrt{
\Big\langle
\bar{P}\mid \bar{R}
\Big\rangle_{\bar{Q}}^{\D}
}
\qquad \forall s\geq 0.$$
\end{lemma}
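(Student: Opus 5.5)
The plan is to prove the bound directly from hyperbolic trigonometry in $(\D,\rho_{\D})$, by comparing the two right-angled triangles cut off by the perpendicular from $\bar{Q}$. I will use the following notation. Let $\bar{F}:=\bar{\sigma}^{n}_{s}(a^{n}(s))$ be the foot of the perpendicular from $\bar{Q}$ to $[\bar{P},\bar{R}]$. As noted in Step~I, $\bar{F}$ lies in the interior of the segment. Set
\[
h:=\rho_{\D}(\bar{Q},\bar{F}),\quad x:=\rho_{\D}(\bar{P},\bar{F}),\quad y:=\rho_{\D}(\bar{F},\bar{R}),
\]
and also $d_1:=\rho_{\D}(\bar{P},\bar{Q})$ and $d_2:=\rho_{\D}(\bar{Q},\bar{R})$. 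Since $\bar{F}\in[\bar{P},\bar{R}]$, we have $\rho_{\D}(\bar{P},\bar{R})=x+y$, and therefore
\[
2\big\langle \bar{P}\mid\bar{R}\big\rangle_{\bar{Q}}^{\D}=(d_1-x)+(d_2-y).
\]

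\textbf{Step 1: relate each excess to $h$.} The hyperbolic Pythagorean theorem in the right triangles $\triangle\bar{P}\bar{F}\bar{Q}$ and $\triangle\bar{R}\bar{F}\bar{Q}$ gives
\[
\cosh d_1=\cosh x\cosh h,\qquad \cosh d_2=\cosh y\cosh h.
\]
Define $f_h(u):=\operatorname{arccosh}(\cosh u\cosh h)-u$ for $u\geq 0$. Its derivative is
\[
\frac{\sinh u\cosh h}{\sinh(\operatorname{arccosh}(\cosh u\cosh h))}-1.
\]
This is $\leq 0$, because $\cosh^2u\cosh^2h-1\geq \sinh^2u\cosh^2h$ is equivalent to $\cosh^2h\geq 1$. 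Hence $f_h$ is nonincreasing, and $f_h(u)\to\log\cosh h$ as $u\to\infty$. It follows that $d_1-x\geq \log\cosh h$ and $d_2-y\geq\log\cosh h$. Combining with the identity above,
\[
\big\langle \bar{P}\mid\bar{R}\big\rangle_{\bar{Q}}^{\D}\geq \log\cosh h .
\]

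\textbf{Step 2: elementary lower bound.} I will show $\log\cosh h\geq c\min\{h^2,h\}$ for a universal $c>0$. For $h\leq 1$, use $\cosh h\geq 1+h^2/2$ together with $\log(1+u)\geq u/2$ for $u\in[0,1]$; this gives $\log\cosh h\geq h^2/4$. For $h\geq 1$, use $\log\cosh h\geq h-\log 2\geq c\,h$.

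\textbf{Step 3: conclude.} Write $g:=\big\langle \bar{P}\mid\bar{R}\big\rangle_{\bar{Q}}^{\D}$.
\begin{itemize}
\item[(a)] If $h\leq 1$, Step 2 gives $g\geq h^2/4$, hence $h\leq 2\sqrt{g}$.
\item[(b)] If $h\geq 1$, Step 2 gives $g\geq c\,h$, hence $h\leq g/c$. This is at most $\sqrt{g}\,\sqrt{G_0}/c$ as long as $g\leq G_0$.
\end{itemize}
In the setting of the Claim, \eqref{E:ae7} gives $g\leq 2a e^{-bs}\leq 2a=:G_0$ uniformly in $n$ and $s$. Therefore $A_1:=\max\{2,\sqrt{2a}/c\}$ works for all $s\geq 0$.

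\textbf{Expected obstacle.} The main point of care is the uniformity: the square-root bound in (b) relies on $g$ being bounded, and that boundedness is supplied only by the quasi-geodesic estimate \eqref{E:ae7}. A secondary issue is the case where the perpendicular foot falls at an endpoint. In that case the nearest point of the segment is that endpoint, say $\bar{P}$, and $g\geq \rho_{\D}(\bar{Q},\bar{P})\cdot(\text{const})$ follows from the obtuse-angle cosine law. However, Step~I already places $\bar{F}$ in the interior for large $s,n$, so I would simply restrict to that regime.
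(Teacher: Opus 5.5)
Your proposal is correct and follows essentially the paper's route: the hyperbolic Pythagorean theorem in the two right triangles at the foot of the perpendicular gives $\log\cosh h\le\langle\bar{P}\mid\bar{R}\rangle_{\bar{Q}}^{\D}$ (the paper multiplies the two identities and bounds the ratio of the $(1+e^{-2\cdot})$ factors by $1$, where you use the monotonicity of $f_h$ instead), and this is then inverted to $h\lesssim\sqrt{g}$. Your two-case inversion, combined with the uniform bound $g\le 2a$ from \eqref{E:ae7}, gives a constant valid for all $s\ge 0$, which is what the statement asserts; the paper's small-$\delta$ expansion of $\operatorname{arccosh}(e^{\delta})$ only yields the bound for $s$ large.
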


\begin{proof}
Let  the $\triangle{\bar{P}\bar{Q}\bar{R}}$ be as figure~\ref{fig:1} and 
$$x=\rho_{\D}\big(\bar{Q},\bar{R}\big);~~ y=\rho_{\D}\big(\bar{Q},\bar{P}\big);~~ z=\rho_{\D}\big(\bar{R},\bar{P}\big)=u+v; ~~r=\rho_{\D}\big(\bar{Q},\mathcal{\bar{P}}^n\big)$$ 
\noindent
where $v=\rho_{\D}(\bar{P}, \mathcal{\bar{P}}^n)$ and $u=\rho_{\D}(\bar{R}, \mathcal{\bar{P}}^n)$.  Since $\mathcal{\bar{P}}^n$ is the foot of the perpendicular from the point $\bar{Q}$ on to the side $[\bar{P},\bar{R}]$, we have 
$$\angle \bar{Q}\mathcal{\bar{P}}^n\bar{R}= \angle \bar{P}\mathcal{\bar{P}}^n\bar{Q}=\frac{\pi}{2}.$$
\noindent
Thus,  applying the hyperbolic law of cosine on 
$\triangle \bar{Q}\mathcal{\bar{P}}^n\bar{R}$ and $\triangle \bar{Q}\bar{P}\mathcal{\bar{P}}^n$ we obtain the following 
\begin{align*}
    \cosh{x}&=\cosh{r}\cosh{u}-\sinh{r}\sinh{u}.\cos{\angle \bar{Q}\mathcal{\bar{P}}^n\bar{R}}=\cosh{r}\cosh{u}\notag\\
    \cosh{y}&=\cosh{r}\cosh{v} .
\end{align*}
Now multiplying the above two equation we get that
\begin{align}\label{E:facte1}
    \cosh^2{r}&=\frac{\cosh{x}\cosh{y}}{\cosh{u}\cosh{v}}=e^{x+y-z}\Bigg[\dfrac{(1+e^{-2x})(1+e^{-2y})}{(1+e^{-2u})(1+e^{-2v})}\Bigg].
\end{align}
\noindent
From the hyperbolic sine law on the triangle 
$ \triangle\bar{Q}\bar{P}\mathcal{\bar{P}}^n$ we get  
\begin{align*}
    \frac{\sinh{y}}{1}&=\dfrac{\sinh{v}}{\sin{(\angle \mathcal{\bar{P}}^n\bar{Q}\bar{P}})}.
    \end{align*}
\smallskip

\noindent
Hence, $v \leq y$. Similarly,  from the  $ \triangle\bar{Q}\mathcal{\bar{P}}^n\bar{R}$ we deduce that $u\leq x$. Let $$\delta:=\Big\langle
\bar{P}\mid \bar{R}
\Big\rangle_{\bar{Q}}^{\D}.$$ Then,  by definition
$$2\delta=x+y-z>0.$$ Therefore, combining the above relations, we get that 
$$e^{-2u}-e^{-2x}\geq 0; \quad~~e^{-2v}-e^{-2y}\geq 0;\,\,\quad~~ e^{-2z}\geq e^{-2(x+y)}. $$
Consequently, we get that $\Bigg[\dfrac{(1+e^{-2x})(1+e^{-2y})}{(1+e^{-2u})(1+e^{-2v})}\Bigg]\leq1$. Therefore, it follows from \eqref{E:facte1} that 
\begin{align}
    \cosh^2{r}&\leq e^{2\delta}\notag\\
    e^{r}+e^{-r}&\leq 2e^{\delta}\notag\\
    r&\leq \ln{\big(e^{\delta}+\sqrt{e^{2\delta}-1}\big)}.\notag\\
    \intertext{Since we have noted in \eqref{E:ae7} that $\delta \to 0$ as $s \to \infty$, we get}
  r  &\leq \ln{\bigg(1+\delta +O(\delta^2)+\sqrt{\delta}O(1)\bigg)} {\rm as} ~~s \to \infty.
\end{align}
Note that $\frac{\ln{(1+x)}}{x}=O(1)$ as $x \to 0$ and  $(\delta +O(\delta^2)+\sqrt{\delta}O(1))=O(\sqrt{\delta})$ as $\delta \to 0$. Hence, we conclude that there exists $M_{0}, ~A_{1}>0$ such that $r\leq A_{1}\sqrt{\delta}$ for all $s>M_{0}$. Hence, we are done.

\end{proof}


\end{document}